\newcommand{\0}{\emptyset}
\newcommand{\om}{\omega}
\newcommand{\mc}{\mathcal}
\newcommand{\eps}{\varepsilon}
\newcommand{\al}{\alpha}
\newcommand{\be}{\beta}
\newcommand{\ka}{\kappa}
\newcommand{\de}{\delta}
\newcommand{\lam}{\lambda}
\newcommand{\dom}{\mathrm{dom}}
\newcommand{\ran}{\mathrm{ran}}
\newcommand{\mf}{\mathfrak}
\newcommand{\mrm}{\mathrm}
\newcommand{\mbb}{\mathbb}
\newcommand{\PP}{\mathbb{P}}
\newcommand{\vd}{\Vdash}
\newcommand{\bc}{\begin{center}}
\newcommand{\ec}{\end{center}}
\newcommand{\bs}{\setminus}
\newcommand{\wh}{\widehat}
\newcommand{\add}{\mathrm{add}}
\newcommand{\cov}{\mathrm{cov}}
\newcommand{\non}{\mathrm{non}}
\newcommand{\cof}{\mathrm{cof}}
\newcommand{\clrest}{\!\upharpoonright\!}
\newcommand{\QQ}{\mathring{\mathbb{Q}}}
\newcommand{\mr}{\mathring}
\def\Ubf#1{{\baselineskip=0pt\vtop{\hbox{$#1$}\hbox{$\sim$}}}{}}
\newtheorem*{claim}{Claim}
\newtheorem{thm}{Theorem}[section]
\newtheorem{lem}[thm]{Lemma}
\newtheorem{prop}[thm]{Proposition}
\newtheorem{cor}[thm]{Corollary}
\newtheorem{fact}[thm]{Fact}
\theoremstyle{definition}
\newtheorem{prob}[thm]{Problem}
\newtheorem{df}[thm]{Definition}
\newtheorem{exa}[thm]{Example}
\newtheorem{obs}[thm]{Observation}
\newtheorem{obss}[thm]{Observations}
\newtheorem{rem}[thm]{Remark}
\title{Ways of destruction}
\thanks{The first author was supported by the Austrian Science Fund (FWF) project P29907. The second author was supported by the Austrian Science Fund (FWF) project I2374-N35.}
\author{Barnab\'as Farkas}
\address{Institut f\"ur Diskrete Mathematik und Geometrie, Technische Universit\"at Wien, Wiedner Hauptstra\ss e 8-10/104, 1040 Wien, Austria \vspace*{0.3cm}}
\email{barnabasfarkas@gmail.com}
\urladdr{http://dmg.tuwien.ac.at/farkas/}
\author{Lyubomyr Zdomskyy}
\email{lzdomsky@gmail.com}
\urladdr{http://dmg.tuwien.ac.at/zdomskyy/}
\subjclass[2010]{03E05, 03E15, 03E17, 03E35}
\keywords{Borel ideal, Mathias-Prikry forcing, Laver-Prikry forcing, Laflamme's forcing, forcing indestructibility, covering property, Tukey connection, Kat\v{e}tov order, fragile ideal, cardinal invariants, Borel Determinacy, trace ideal}
\begin{document}

\maketitle

\begin{abstract}
We study the following natural strong variant of destroying Borel ideals: $\PP$ {\em $+$-destroys} $\mc{I}$ if $\PP$ adds an $\mc{I}$-positive set which has finite intersection with every $A\in\mc{I}\cap V$. Also, we discuss the associated variants  \begin{align*}
\non^*(\mc{I},+)=&\min\big\{|\mc{Y}|:\mc{Y}\subseteq\mc{I}^+,\; \forall\;A\in\mc{I}\;\exists\;Y\in\mc{Y}\;|A\cap Y|<\om\big\}\\ \cov^*(\mc{I},+)=&\min\big\{|\mc{C}|:\mc{C}\subseteq\mc{I},\; \forall\;Y\in\mc{I}^+\;\exists\;C\in\mc{C}\;|Y\cap C|=\om\big\}
\end{align*}
of the star-uniformity and the star-covering numbers of these ideals.

Among other results, (1) we give a simple combinatorial characterisation when a real forcing $\PP_I$ can $+$-destroy a Borel ideal $\mc{J}$; (2) we discuss many classical examples of Borel ideals, their $+$-destructibility, and cardinal invariants; (3) we show that the Mathias-Prikry, $\mbb{M}(\mc{I}^*)$-generic real $+$-destroys $\mc{I}$ iff $\mbb{M}(\mc{I}^*)$ $+$-destroys $\mc{I}$ iff $\mc{I}$ can be $+$-destroyed iff $\cov^*(\mc{I},+)>\om$; (4) we characterise when the Laver-Prikry, $\mbb{L}(\mc{I}^*)$-generic real $+$-destroys $\mc{I}$, and in the case of P-ideals, when exactly $\mbb{L}(\mc{I}^*)$ $+$-destroys $\mc{I}$; (5) we briefly discuss an even stronger form of destroying ideals closely related to the additivity of the null ideal.
\end{abstract}


\section{Motivation}\label{mot}
\subsection*{Ideals on $\om$ and on Polish spaces}
If $I$ is an ideal on an infinite set $X$, we will always assume that $[X]^{<\om}=\{A\subseteq X:|A|<\om\}\subseteq I$ and $X\notin I$. Let $I^+=\mc{P}(X)\setminus I$ be the family of $I$-positive sets and $I^*=\{X\setminus A:A\in I\}$ be the dual filter of $I$. We will work with ideals on countable underlying sets, e.g.
\begin{align*}
\mc{I}_{1/n} & =\Big\{A\subseteq\om\setminus\{0\}:\sum_{n\in A}\frac{1}{n}<\infty\Big\},\\
 \mrm{Nwd} &=\big\{A\subseteq \mbb{Q}:A\;\text{is nowhere dense (in $\mbb{Q}$)}\big\},\;\text{or}\\
\mrm{Fin}\otimes\mrm{Fin} & =\big\{A\subseteq\om\times\om:\forall^\infty\;n\;\,|(A)_n|<\om\big\}
\end{align*}
where $\forall^\infty$  stands for ``for all but finitely many'', $ \exists^\infty$ for $\neg\forall^\infty\neg$, that is, for ``there is infinitely many'', and $(A)_n$ denotes $\{k:(n,k)\in A\}$; also we will work with ($\sigma$-)ideals on uncountable Polish spaces, e.g.
\begin{align*}
\mc{M} &=\big\{B\subseteq\,\!^\om 2:B\;\text{is meager (i.e. of first-category)}\big\},\\
\mc{N} &=\big\{B\subseteq\,\!^\om 2:B\;\text{is of Lebesgue-measure null}\big\},\;\text{or}\\
\mc{K}_\sigma & =\big\{B\subseteq\,\!^\om\om:B\;\text{can be covered by a $\sigma$-compact set}\big\}
\end{align*}
where $\,\!^\om 2$ and $\,\!^\om\om$ are equipped with the usual Polish product topologies, that is, these topologies are generated by the clopen sets $\{f\in\,\!^\om\ell:t\subseteq f\}$ where $\ell=2$ or $\ell=\om$ and $t\in\,\!^{<\om}\ell=\{s:s$ is a function, $\dom(s)\in\om$, and $\mrm{ran}(s)\subseteq\ell\}$. The Cantor space $\,\!^\om 2$ is compact and the measure we referred to as the Lebesgue-measure above is the  product probability measure (the power of the uniform distribution on $2$).

By identifying $\mc{P}(\om)$ and $\,\!^\om 2$, we can talk about measure, category, and complexity of subsets of $\mc{P}(\om)$, in particular, of ideals on $\om$, and similarly on arbitrary countably infinite underlying sets (e.g. $\mc{I}_{1/n}$ is $F_\sigma$, $\mrm{Nwd}$ is $F_{\sigma\delta}$, and $\mrm{Fin}\otimes\mrm{Fin}$ is $F_{\sigma\delta\sigma}$). In Section \ref{prel}, we will present many classical examples of ideals on countable underlying sets.

\smallskip
Concerning combinatorial properties and cardinal invariants of definable (typically Borel) ideals in forcing extensions, one of the most crucial points is to understand whether a forcing notion destroys an ideal, and if so, how ``strongly''. We are interested in various notions of destroying ideals, in their possible characterisations, in their interactions with classical properties of forcing notions, and in the associated cardinal invariants.

We will mainly focus on classical forcing notions, and in general on forcing notions (can be written / equivalent to one) of the form $\PP_I=(\mc{B}(X)\setminus I,\subseteq)$ where $X$ is an uncountable Polish space, $\mc{B}(X)=\{$Borel subsets of $X\}$, and $I$ is a $\sigma$-ideal on $X$ $\sigma$-generated by a ``definable'' family of Borel sets (see later). For example, $\mbb{C}=\PP_\mc{M}$ is the Cohen forcing, $\mbb{B}=\PP_\mc{N}$ is the random forcing, and $\PP_{\mc{K}_\sigma}$ is the Miller forcing. In general, we know (see \cite[Prop. 2.1.2]{zap}) that $\PP_I$ adds a ``real'' $r_I\in X$ ($X$ can be seen as a $G_\delta$ subset of $^\om\,\![0,1]$) determined by the following property: If $V$ is a transitive model (of a large enough finite fragment) of $\mrm{ZFC}$, $I$ (more precisely, the family $\sigma$-generating $I$) is coded in $V$, $G$ is $\PP_I$-generic over $V$, and $B\in\PP_I$ is a Borel set coded in $V$, then $r_I\in B$ iff $B\in G$\footnote{When working with $\PP_I$, sometimes we refer to $\PP_I$ in the universe and sometimes to its interpretation in a transitive model but this should always be clear from the context. Similarly when working with Borel sets, for example in models or in a formula of the forcing language, we refer to their definition, e.g. if $B$ is coded in $V$, then $B\in G$ means that the interpretation $B^V=B\cap V$ of $B$'s code in $V$ belongs to $G$; and $p\vd_\PP \mr{x}\in B$ means that $\mr{x}[G]\in B^{V[G]}$ (or simply $\mr{x}[G]\in B$) for every $\PP$-generic $G$ (over $V$) containing $p$.} (see \cite{zap} for a detailed study of these forcing notions).

\subsection*{Destroying ideals} Let us recall the classical notion of forcing (in)destructibility: We say that an ideal $\mc{I}$ on $\om$ is {\em tall} if every infinite $X\subseteq\om$ contains an infinite element of $\mc{I}$, e.g. $\mc{I}_{1/n}$, $\mrm{Nwd}$, and $\mrm{Fin}\otimes\mrm{Fin}$ are tall. A forcing notion $\PP$ {\em can destroy} $\mc{I}$ if there is a condition $p\in\PP$ such that
\begin{align*} & p\vd\text{``the ideal generated by $\mc{I}^V$ is not tall'' i.e.}\\
& p\vd\exists\;Y\in [\om]^\om\;\forall\;A\in\mc{I}^V\;|Y\cap A|<\om
\end{align*}
where we write $\mc{I}^V$ to make it completely clear that even in the case of definable ideals, we refer to the ideal from (or interpreted in) the ground model.\footnote{Of course, we could also write $\check{\mc{I}}$ here, referring to the canonical $\PP$-name of $\mc{I}$, but as usual, in the forcing language we will not use any specific notions for ground model objects.}  We say that $\PP$ {\em destroys} $\mc{I}$ if $p=1_\PP$, and that $\mc{I}$ is {\em $\PP$-indestructible} if $\PP$ cannot destroy $\mc{I}$.

\smallskip
We know that every ideal can be destroyed by a $\sigma$-centered forcing notion: Let $\mc{I}$ be arbitrary and define the associated {\em Mathias-Prikry forcing} $\mbb{M}(\mc{I}^*)$ as follows (see \cite{Canjar}, \cite{CRZ}, and \cite{HrMi}): $(s,F)\in\mbb{M}(\mc{I}^*)$ if $s\in [\om]^{<\om}$ and $F\in\mc{I}^*$; $(s_0,F_0)\leq (s_1,F_1)$ if $s_0$ end-extends $s_1$ (with respect to a fixed enumeration of the underlying set of $\mc{I}$), $F_0\subseteq F_1$, and $s_0\setminus s_1\subseteq F_1$. We know that $\mbb{M}(\mc{I}^*)$ is $\sigma$-centered (conditions with the same first coordinates are compatible), and it destroys $\mc{I}$: If $G$ is $(V,\mbb{M}(\mc{I}^*))$-generic and $Y_G=\bigcup\{s:(s,F)\in G$ for some $F\}$, then $Y_G\in [\om]^\om\cap V[G]$ and $|Y_G\cap A|<\om$ for every $A\in\mc{I}^V$.

\smallskip
Sometimes $\mbb{M}(\mc{I}^*)$ does more than just ``simply'' destroying $\mc{I}$: Trivial density arguments show that if $\mc{I}=\mc{I}_{1/n}$ or $\mc{I}=\mrm{Nwd}$ then $V^{\mbb{M}(\mc{I}^*)}\models Y_{\mr{G}}\in\mc{I}^+$ (where $\mc{I}^+$ is defined in the extension of course). In general, $Y_{\mr{G}}$ is not necessarily $\mc{I}$-positive: If $\mc{I}=\mrm{Fin}\otimes\mrm{Fin}$ and $Y\in \mc{I}^+\cap V^\PP$, then $Y\cap (\{n\}\times\om)$ is infinite for infinitely many $n$ and $\{n\}\times\om\in \mc{I}^V$ for every $n$, in other words, no forcing notion can add a $\mc{I}$-positive set which is almost disjoint from all elements of $\mc{I}^V$. In the case of this stronger notion of destruction we need definability, we will focus on Borel, sometimes analytic or coanalytic ideals. If $\mc{I}$ is analytic or coanalytic and $\PP$ adds a $\mr{Y}\in\mc{I}^+$ such that $|\mr{Y}\cap A|<\om$ for every $A\in\mc{I}^V$, then we will say that $\PP$ {\em $+$-destroys } (or {\em can $+$-destroy}) $\mc{I}$. We will show (see Corollary \ref{mathias}) that if a Borel ideal $\mc{I}$ can be $+$-destroyed, then $\mbb{M}(\mc{I}^*)$ $+$-destroys it.

Why do we prefer (at most) analytic or coanalytic ideals? If $\mc{I}$ is $\Ubf{\Sigma}^1_1$ or $\Ubf{\Pi}^1_1$, then, applying the Mostowski Absoluteness Theorem, $\mc{I}^V=\mc{I}\cap V$ whenever $V$ is a transitive model of $\mrm{ZFC}$ and $\mc{I}$ is coded $V$. Furthermore, if $\mc{X}\subseteq\mc{P}(\om)$ is an analytic or coanalytic set, then ``$\mc{X}$ is an ideal'' is a $\Ubf{\Pi}^1_2$ statement and hence, by the Shoenfield Absoluteness Theorem, it is absolute between $V$ and $V^\PP$ assuming $\mc{X}$ is coded in $V$  (in general, between transitive models $V\subseteq W$ satisfying $\om_1^W\subseteq V$).

\smallskip
One may ask now if we can go even further and add a set $Z\in\mc{I}^*\cap V^\PP$ which has finite intersection with every $A\in\mc{I}\cap V$ (where $\mc{I}$ is analytic or coanalytic), if so, we say that $\PP$ {\em $*$-destroys} (or {\em can $*$-destroy}) $\mc{I}$. Let us point out certain crucial observations concerning $*$-destructibility:

1) If we can add such a $Z$, then $A\subseteq^* \om\setminus Z\in\mc{I}$ (where $X\subseteq^* Y$ iff $|X\setminus Y|<\om$) for every $A\in \mc{I}^V$. Therefore $\mc{I}$ must be a {\em P-ideal}, that is, for every countable $\mc{A}\subseteq\mc{I}$ there is a {\em pseudounion} $B\in\mc{I}$ of $\mc{A}$, that is, $A\subseteq^* B$ for every $A\in\mc{A}$ (e.g. $\mc{I}_{1/n}$ is a P-ideal but $\mrm{Nwd}$ and $\mrm{Fin}\otimes\mrm{Fin}$ are not). Why? The formula ``$x=(x_n)_{n\in\om}\in\,\!^\om\mc{P}(\om)$ is a sequence in $\mc{I}$ without pseudounion in $\mc{I}$'' is $\Ubf{\Pi}^1_2$ hence absolute between $V$ and $V^\PP$.

2) A $\sigma$-centered forcing notion cannot $*$-destroy any tall analytic P-ideal $\mc{I}$
(see \cite[Thm. 6.4]{FaSo}), in particular, $\mbb{M}(\mc{I}^*)$ cannot $*$-destroy $\mc{I}$. The same holds for {\em somewhere tall} analytic P-ideals, that is, when $\mc{I}\clrest X=\{A\subseteq X:A\in\mc{I}\}$ is tall for some $X\in\mc{I}^+$. What can we say about nowhere tall analytic P-ideals? Applying Solecki's characterisation of analytic P-ideals, one can show (see later) that up to isomorphism (via a bijection between the underlying sets, in notation $\mc{I}\simeq\mc{J}$) there are only three nowhere tall analytic P-ideals: {\em trivial modifications of} $\mrm{Fin}=[\om]^{<\om}$, that is, ideals of the form $\{A\subseteq\om:|A\cap X|<\om\}$ for an infinite $X\subseteq\om$ (clearly, there are two nonisomorphic ideals of this form); and the density ideal (see below for the definition of density ideals) \[ \{\0\}\otimes\mrm{Fin}=\big\{A\subseteq \om\times\om:\forall\;n\;|(A)_n|<\om\big\}.\]
Clearly, every forcing notion $*$-destroys trivial modifications of $\mrm{Fin}$, and $\PP$ $*$-destroys $\{\0\}\otimes\mrm{Fin}$ iff $\PP$ adds a dominating real (and hence in these three special cases, $*$-destruction is possible by $\sigma$-centered forcing notions).

3) And finally, we know that every analytic P-ideal $\mc{I}$ can be $*$-destroyed: We either use Solecki's characterisation and an ad hoc construction for a fixed analytic P-ideal, or consider the localization forcing (see below or \cite[Lem. 3.1]{towers}).

\subsection*{The role of the Kat\v{e}tov(-Blass) preorder}
Probably the most well-known characterisation of (classical) forcing destructibility of ideals is via Kat\v{e}tov-reductions to trace ideals (see \cite{BrYa} and \cite{quot}). If $\mc{I}$ and $\mc{J}$ are ideals on $\om$, then $\mc{I}$ is {\em Kat\v{e}tov-below} $\mc{J}$,
\[ \mc{I}\leq_\mrm{K}\mc{J}\;\;\text{iff}\;\;\exists\;f\in\,\!^\om\om\;\forall\;A\subseteq\om\;\big(A\in\mc{I}\longrightarrow f^{-1}[A]\in\mc{J}\big).\]
If we restrict $f$ to be finite-to-one in this definition, we obtain the {\em Kat\v{e}tov-Blass-preorder}, $\leq_\mrm{KB}$. These preorders play a fundamental role in characterising combinatorial properties of ideals (see e.g. \cite{hrusaksummary}, \cite{hrusakkatetov}, \cite{weakq}, \cite{ramseyprop}, \cite{solid}, \cite{kanoree}, \cite{hausd}, \cite{ppoints}). Let us point out here that if $\mc{I}$ and $\mc{J}$ are Borel ideals, then the statement ``$\mc{I}\leq_\mrm{K(B)}\mc{J}$'' is $\Ubf{\Sigma}^1_2$ and hence absolute between $V$ and $V^\PP$.

For $A\subseteq\,\!^{<\om}\ell$ where $\ell=2$ or $\ell=\om$, define the {\em $G_\delta$-closure} of $A$ as
\[ [A]_\delta=\big\{x\in \,\!^\om\ell:\exists^\infty\;n\;x\clrest n\in A\big\};\footnote{Notice that $[A]_\delta$ does not depend on $\ell$, even if we allow $\ell$ to be any countable set, because $[A]_\delta=\{x:x$ is a function, $\dom(x)=\om$, and $\exists^\infty$ $n$ $x\clrest n\in A\}$.}\] and for any ideal $I$ on $\,\!^\om \ell$ define the {\em trace of $I$}, an ideal on $\,\!^{<\om}\ell$, as follows \[ \mrm{tr}(I)=\big\{A\subseteq\,\!^{<\om}\ell:[A]_\delta\in I\big\}.\]
For example if $\mrm{NWD}$ is the ideal of nowhere dense subsets of $\,\!^\om 2$, then $\mrm{tr}(\mrm{NWD})=\mrm{tr}(\mc{M})=$
\[ \big\{A\subseteq \,\!^{<\om} 2:\forall\;s\in \,\!^{<\om} 2\;\exists\;t\in \,\!^{<\om} 2\;\big(s\subseteq t\;\text{and}\;A\cap t^\uparrow=\0\big)\big\}\simeq\mrm{Nwd}\]
where $t^\uparrow=\{t'\in \,\!^{<\om} 2:t\subseteq t'\}$. It is trivial to see that $\PP_I$ destroys $\mrm{tr}(I)$: If $\mr{r}_I\in \,\!^\om \ell\cap V^{\PP_I}$ is the $\PP_I$-generic real over $V$, $\mr{R}=\{\mr{r}_I\clrest n:n\in\om\}\in [\,\!^{<\om}\ell]^\om\cap V^{\PP_I}$, $B\in\PP_I$, and $A\in\mrm{tr}(I)$, then $B'=B\setminus [A]_\delta\in\PP_I$, $B'\leq B$, and $B'\vd |A\cap \mr{R}|<\om$.

\begin{thm}\label{origchar} {\em (see \cite[Thm. 1.6]{quot})}
Let $I$ be a $\sigma$-ideal on $\,\!^\om\ell$ such that $\PP_I$ is proper and $I$ satisfies the continuous readings of names (CRN, see below), and let $\mc{J}$ be an ideal on $\om$. Then $\PP_I$ can destroy $\mc{J}$ if, and only if $\mc{J}\leq_\mrm{K}\mrm{tr}(I)\clrest X$ for some $X\in\mrm{tr}(I)^+$.
\end{thm}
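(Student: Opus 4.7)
I would prove the two directions separately; the easier is $(\Leftarrow)$, a direct genericity argument, while $(\Rightarrow)$ invokes CRN together with a tree-coding construction.

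For $(\Leftarrow)$, fix $X\in\mrm{tr}(I)^+$ and $f:X\to\om$ witnessing $\mc{J}\leq_\mrm{K}\mrm{tr}(I)\clrest X$, and take the $\PP_I$-name $\mr{Y}=f[\mr{R}\cap X]$ where $\mr{R}=\{\mr{r}_I\clrest n:n\in\om\}$. I would show that $[X]_\delta\in\PP_I$ forces $\mr{Y}$ to destroy $\mc{J}$. For any $A\in\mc{J}^V$ the Kat\v{e}tov hypothesis gives $[f^{-1}[A]]_\delta\in I$, so $[X]_\delta\setminus[f^{-1}[A]]_\delta$ is a condition and is dense below $[X]_\delta$; by genericity $\mr{R}\cap f^{-1}[A]$ is finite, hence so is $\mr{Y}\cap A$. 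Running the same argument with $A$ ranging over arbitrary finite subsets of $\om$ (which all lie in $\mc{J}$ by our standing convention) forces $\mr{Y}$ itself to be infinite.

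For $(\Rightarrow)$, let $B\vd\mr{Y}\in[\om]^\om\wedge\forall A\in\mc{J}^V\;|\mr{Y}\cap A|<\om$. Using CRN, properness, and (following Zapletal) a refinement of the Polish topology on $\,\!^\om\ell$ that makes conditions closed while preserving the $\sigma$-ideal $I$, I would pass to a closed subcondition $B'=[T]\leq B$ for a tree $T\subseteq\,\!^{<\om}\ell$ together with a continuous $g:B'\to[\om]^\om$ satisfying $B'\vd\mr{Y}=g(\mr{r}_I)$. For $t\in T$ set $F(t)=\bigcap_{x\in[t]\cap B'}g(x)$, the set of integers already forced into $\mr{Y}$ by the condition $[t]\cap B'$; by continuity of $g$ one has $\bigcup_n F(x\clrest n)=g(x)$ for every $x\in B'$, and $F$ is monotone along branches of $T$. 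Now let $X=\{t\in T:F(t)\setminus F(t^-)\neq\0\}$ (with the convention $F(\0^-):=\0$) and define $f:X\to\om$ by $f(t)=\min(F(t)\setminus F(t^-))$.

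It then remains to check that $X\in\mrm{tr}(I)^+$ and that $f^{-1}[A]\in\mrm{tr}(I)$ for every $A\in\mc{J}^V$. The first holds because the infiniteness of $g(x)$ forces $F(x\clrest n)$ to grow strictly at infinitely many $n$, giving $B'\subseteq[X]_\delta$; conversely $X\subseteq T$ together with the tree property of $T$ yields $[X]_\delta\subseteq[T]_\delta=[T]=B'$, so in fact $[X]_\delta=B'\notin I$. For the second, the values $f(x\clrest n)$ along a branch are pairwise distinct (a direct consequence of $f(t)\notin F(t^-)$ and monotonicity), so $x\in[f^{-1}[A]]_\delta\subseteq B'$ forces $|g(x)\cap A|=\om$. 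Hence $[f^{-1}[A]]_\delta\subseteq E_A:=\{x\in B':|g(x)\cap A|=\om\}$, which is Borel; if $E_A\notin I$ it would be a condition below $B'$ forcing $|\mr{Y}\cap A|=\om$, contradicting our choice of $B$, so $E_A\in I$ and we are done.

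The main obstacle I anticipate is the passage to the closed subcondition $B'=[T]$ carrying a genuinely continuous $g$: CRN in its raw form only supplies a Borel $g$ on a Borel subcondition, and the clean identification $[T]_\delta=[T]$ used to pin down $[X]_\delta=B'$ really needs $B'$ to be the body of a tree. This is where invoking Zapletal's topology-refinement machinery is indispensable; everything else in the construction is soft manipulation of the continuous function $g$ via the monotone family $\{F(t):t\in T\}$.
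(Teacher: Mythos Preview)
Your proposal is essentially correct. A few remarks on the comparison with the paper.

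The paper does not give its own complete proof of this theorem; it is quoted from \cite{quot}. What the paper \emph{does} supply is, in the Remark following Theorem~\ref{wcpfi}, an alternative argument for the direction you call $(\Leftarrow)$ (phrased contrapositively as ``$\PP_I$ cannot destroy $\mc{J}$ implies $\mc{J}\nleq_\mrm{K}\mrm{tr}(I)\clrest X$ for every $X$''). That argument is genuinely different from yours: rather than a direct genericity computation below $[X]_\delta$, it assumes a Kat\v{e}tov reduction $f$, packages it into a sequence of Borel sets $B_n=\{x\in[X]_\delta:\exists\,k\,(x\clrest k\in X\text{ and }f(x\clrest k)=n)\}$, and then applies the combinatorial criterion of Theorem~\ref{wcpfi} to reach a contradiction. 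The payoff of the paper's route is that it works \emph{without} CRN, showcasing that their weak-covering characterisation subsumes this half of the classical result. Your direct argument is shorter and more transparent, but it does not yield that extra generality.

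For $(\Rightarrow)$ the paper says nothing beyond the citation, and your outline is the standard approach from \cite{quot}: pass via CRN and topology refinement to a closed condition $[T]$ with a continuous witness $g$, read off the ``decided part'' $F(t)$ of $\mr{Y}$ along nodes of $T$, and let $f$ record the first new value at each growth point. Your verification that $[X]_\delta=B'$ and that the values $f(x\clrest n)$ along a branch are pairwise distinct is correct (the latter uses $f(t)\notin F(t^-)$ together with monotonicity of $F$, exactly as you say). The only soft spot, which you already flag, is the topology-refinement step ensuring $B'$ is literally the body of a tree; this is indeed the standard machinery from \cite{zap} and is used in \cite{quot} in the same way.

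One minor point on your $(\Leftarrow)$: the sentence ``Running the same argument with $A$ ranging over arbitrary finite subsets of $\om$ \dots\ forces $\mr{Y}$ itself to be infinite'' is slightly elliptic. What you need is that for every finite $F$, the set $f^{-1}[F]\in\mrm{tr}(I)$, hence generically $\mr{R}\cap f^{-1}[F]$ is finite while $\mr{R}\cap X$ is infinite, so $f[\mr{R}\cap X]\nsubseteq F$; since every finite $F\subseteq\om$ lies in $V$, this gives $|\mr{Y}|=\om$. You have the right idea, just make the quantifier explicit.
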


The paper is organised as follows: In Section \ref{prel}, we present many classical Borel ideals, as well as the characterisations of $F_\sigma$ ideals and of analytic P-ideals (due to Mazur and Solecki). In Section \ref{degrees}, we give a detailed introduction to the notions of destructibility of ideals and to the associated cardinal invariants, also, we present a combinatorial characterisation of forcing (in)destructibility by proper real forcing notions. In Section \ref{secexa}, we discuss our examples of non P-ideals from Section \ref{prel} in the context of $+$-destructibility and the new cardinal invariants. In Section \ref{secmatlav}, applying Laflamme's filter games and his results, we characterise when the Mathias-Prikry and Laver-Prikry generic reals, and in the case of the first one, the forcing notion in general, $+$-destroy the defining ideal. In Section \ref{secfrag}, we characterise when exactly the Laver-Prikry forcing $+$-destroys the defining P-ideal. In Section \ref{secstardest}, we present a survey on $*$-destructibility and its connection to the null ideal. Finally, in Section \ref{secque}, we list all our remaining open questions.

\section{Borel ideals}\label{prel}
We present some additional classical examples of Borel ideals (for their specific roles in characterisation results see e.g. \cite{hrusaksummary} or  \cite{hrusakkatetov}, also find more citations below). We already defined $\mc{I}_{1/n}$, $\mrm{Nwd}$, $\mrm{Fin}\otimes\mrm{Fin}$, and $\{\0\}\otimes\mrm{Fin}$.

\smallskip
{\em Summable ideals} (a.k.a. generalisations of $\mc{I}_{1/n}$): Let $h:\om\to [0,\infty)$ such that $\sum_{n\in\om}
h(n)=\infty$. Then the {\em summable ideal generated by $h$} is
\[ \mc{I}_h=\bigg\{A\subseteq\om:\sum_{n\in A} h(n)<\infty\bigg\}.\]
$\mc{I}_h$ is an $F_\sigma$ P-ideal, and it is tall iff
$\lim_{n\to\infty}h(n)=0$.

\smallskip
{\em Eventually different ideals}: Let
\[ \mc{ED}=\Big\{A\subseteq\omega\times\om:\limsup_{n\in\om} |(A)_n|<\infty\Big\},\]
$\Delta=\{(n,k)\in\om\times\om:k\leq n\}$, and $\mc{ED}_\mrm{fin}=\mc{ED}\clrest \Delta$. Then $\mc{ED}$ and $\mc{ED}_\mrm{fin}$ are tall $F_\sigma$ non P-ideals.

\smallskip
The {\em random graph ideal}: Let
\[ \mrm{Ran}=\mrm{id}\big(\big\{\text{homogeneous subsets of the random graph}\big\}\big)\]
where the {\em random graph} $(\om,E)$, $E\subseteq [\om]^2$ is up to isomorphism uniquely determined by the following property: For every pair $A,B\subseteq\om$ of nonempty, finite, disjoint sets, there is an $n\in\om\setminus (A\cup B)$ such that $\{\{n,a\}:a\in A\}\subseteq E$ and $\{\{n,b\}:b\in B\}\cap E=\0$. A set $H\subseteq\om$ is ($E$-){\em homogeneous} iff $[H]^2\subseteq E$ or $[H]^2\cap E=\0$; and $\mrm{id}(\mc{H})$ stands for the ideal generated by $\mc{H}$ (that is, the collection of all subsets of $\bigcup\mc{H}$ which can be covered by finitely many elements of $\mc{H}$, of course in general $\mrm{id}(\mc{H})$ is not necessarily proper). $\mrm{Ran}$ is a tall $F_\sigma$ non P-ideal.

\smallskip
{\em Solecki's ideal} (see \cite{solid}, \cite{kanoree}, and \cite{hausd}): Let $\mrm{CO}(\,\!^\om 2)$ be the family of clopen subsets of $\,\!^\om 2$ and $\Omega=\{C\in\mrm{CO}(\,\!^\om 2):\lam(C)=1/2\}$ where $\lam$ is the Lebesgue-measure on $\,\!^\om 2$ (clearly, $C$ is clopen iff $C$ is a union of finitely many basic clopen sets, and hence $|\mrm{CO}(\,\!^\om 2)|=|\Omega|=\om$). The ideal $\mc{S}$ on $\Omega$ is generated by $\{\mc{C}_x:x\in\,\!^\om 2\}$ where $\mc{C}_x=\{C\in\Omega:x\in C\}$. $\mc{S}$ is a tall $F_\sigma$ non P-ideal.

\smallskip
{\em Density and generalised density ideals.}
Let $(P_n)_{n\in\omega}$ be a partition of $\om$ into nonempty finite sets and let $\vec{\vartheta}=(\vartheta_n)_{n\in\om}$ be a
sequences of measures or submeasures (in the generalised case, see the definition below), $\vartheta_n:\mc{P}(P_n)\to [0,\infty)$ such that $\limsup_{n\to\infty}\vartheta_n(P_n)>0$. The {\em (generalised) density ideal generated by $\vec{\vartheta}$}
is
\[ \mc{Z}_{\vec{\vartheta}}=\Big\{A\subseteq\om:\lim_{n\to\infty}\vartheta_n(A\cap P_n)=0\Big\}.\]
Ideals of this form are $F_{\sigma\delta}$ P-ideals, and the ideal $\mc{Z}_{\vec{\vartheta}}$ is tall iff $\max\{\vartheta_n(\{k\}):k\in P_n\}\xrightarrow{n\to\infty}0$. The {\em density zero ideal}
\[ \mc{Z}=\big\{A\subseteq\om\setminus\{0\}:|A\cap n|/n\to 0\big\}=\bigg\{A\subseteq\om\setminus\{0\}:\frac{|A\cap [2^n,2^{n+1})|}{2^n}\to 0\bigg\}\] is a tall density ideal. It is easy to see that $\mc{I}_{1/n}\subsetneq\mc{Z}$. Also, it is straightforward to check that $\{\0\}\otimes\mrm{Fin}$ is a density ideal.

\smallskip
The {\em trace ideal of the null ideal}:  \[\mrm{tr}(\mc{N})=\big\{A\subseteq \,\!^{<\om} 2:[A]_\delta\in \mc{N}\big\}\]
is a tall $F_{\sigma\delta}$ P-ideal (but in general, trace ideals can be very complex, see \cite[Prop. 5.1]{quot}).

\smallskip
The ideal $\mrm{Conv}$ is generated by those infinite subsets of $\mbb{Q}\cap [0,1]$ which are convergent in $[0,1]$, in other words
\[ \mrm{Conv}=\big\{A\subseteq \mbb{Q}\cap [0,1]:|\text{accumulation points of $A$ (in}\;\mbb{R})|<\om\big\}.\]
This ideal is a tall, $F_{\sigma\delta\sigma}$, non P-ideal.

\smallskip
It is easy to see that there are no $G_\delta$ (i.e. $\Ubf{\Pi}^0_2$) ideals, and we know that there are many $F_\sigma$ (i.e. $\Ubf{\Sigma}^0_2$) ideals. In general, we know  (see \cite{Cal85} and \cite{Cal88}) that there are $\Ubf{\Sigma}^0_\al$- and $\Ubf{\Pi}^0_\al$-complete ideals for every $\al\geq 3$. About ideals on the ambiguous levels of the Borel hierarchy see \cite{Eng}. For projective examples, see \cite{adrmix}.

\smallskip
Kat\v{e}tov and Kat\v{e}tov-Blass reducibilities between our main examples have been extensively studied (see e.g. \cite{hrusakkatetov}, \cite{janajorg}, and \cite{towers}), and apart from the very few unknown reducibilities (e.g. the still open $\mrm{Ran}\leq_\mrm{K(B)}\mc{S}$), we are provided with a quite satisfying ``map'' of Kat\v{e}tov-reducibilities between our main examples. Moreover, all these reductions can be chosen as finite-to-one (i.e. as KB-reductions), and in almost all cases, if we know that there is a no KB-reduction then there is no K-reduction either between these examples.

\subsection*{$F_\sigma$ ideals and analytic P-ideals} There is a natural way of defining nice ideals on $\om$ from submeasures.
A function $\varphi:\mc{P}({\omega})\to [0,\infty]$
is a {\em submeasure on $\om$} if $\varphi(\0)=0$; ${\varphi}(X)\le \varphi(X\cup Y)\le \varphi(X)+\varphi(Y)$ for every $X,Y\subseteq\om$; and $\varphi(\{n\})<\infty$ for every $n\in {\omega}$. $\varphi$ is {\em lower semicontinuous} (lsc, for short) if $\varphi(X)=\sup\{\varphi(X\cap n):n\in\om\}$ for each $X\subseteq\om$.

If $\varphi$ is an lsc submeasure on $\om$ then for $X\subseteq\om$ let $\|X\|_\varphi=\lim_{n\to\infty}\varphi(X\bs n)$.
We assign two ideals to a submeasure $\varphi$ as follows
\begin{align*}
\mrm{Fin}(\varphi) & =  \big\{X\subseteq\om:\varphi(X)<\infty\big\},\\
\mrm{Exh}(\varphi) & =  \big\{X\subseteq {\omega}:\|X\|_\varphi=0\big\}.
\end{align*}
It is easy to see that if $\mrm{Fin}(\varphi)\ne\mc{P}(\om)$, then it is an $F_\sigma$ ideal; and similarly if $\mrm{Exh}(\varphi)\ne\mc{P}(\om)$, then it is an $F_{\sigma\delta}$ P-ideal. Clearly, $\mc{I}_{\varphi(\{\cdot\})}\subseteq\mrm{Exh}(\varphi)\subseteq\mrm{Fin}(\varphi)$ always holds where $\mc{I}_{\varphi(\{\cdot\})}$ stands for the summable ideal generated by the sequence $(\varphi(\{n\}))_{n\in\om}$. From now on, when working with $\mrm{Fin}(\varphi)$ or $\mrm{Exh}(\varphi)$, we will always assume that they are proper ideals.
It is straightforward to see that if $\varphi$ is an lsc submeasure on $\om$ then $\mrm{Exh}(\varphi)$ is tall iff $\lim_{n\to\infty}\varphi(\{n\})=0$.

\begin{exa}
If $\mc{I}_h$ is a summable ideal then $\mc{I}_h=\mrm{Fin}(\varphi_h)=\mrm{Exh}(\varphi_h)$ where $\varphi_h(A)=\sum_{n\in A}h(n)$; if $\mc{Z}_{\vec\vartheta}$ is a generalised density ideal, then $\mc{Z}_{\vec\vartheta}=\mrm{Exh}(\varphi_{\vec\vartheta})$ where $\varphi_{\vec\vartheta}(A)=\sup\{\vartheta_n(A\cap P_n):n\in\om\}$; and finally $\mrm{tr}(\mc{N})=\mrm{Exh}(\psi)$ where $\psi(A)=\sum\{2^{-|s|}:s\in A$ is $\subseteq$-minimal in $A\}$ (and of course, $\varphi_h$, $\varphi_{\vec\vartheta}$, and $\psi$ are lsc submeasures).
\end{exa}

The following characterisation theorem gives us the most important tool when working on combinatorics of $F_\sigma$ ideals and analytic P-ideals.

\begin{thm}{\em (\cite{Mazur} and \cite{solecki})}\label{char} Let $\mc{I}$ be an ideal on $\om$.
\begin{itemize}
\item $\mc{I}$ is an $F_\sigma$ ideal iff $\mc{I}=\mrm{Fin}(\varphi)$ for some lsc submeasure $\varphi$.
\item $\mc{I}$ is an analytic $P$-ideal iff $\mc{I}=\mrm{Exh}(\varphi)$ for some lsc submeasure $\varphi$.
\item $\mc{I}$ is an $F_\sigma$ P-ideal iff $\mc{I}=\mrm{Fin}(\varphi)=\mrm{Exh}(\varphi)$ for some lsc submeasure $\varphi$.
\end{itemize}
\end{thm}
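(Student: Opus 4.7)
The plan is to first record the three backward implications (which are essentially hinted at immediately before the theorem) and then attack the three forward implications one at a time. For the backward directions, if $\varphi$ is any lsc submeasure, then $\{A:\varphi(A)\le r\}$ is closed in $\,\!^\om 2$ because lower semicontinuity gives $\varphi(A)=\sup_n\varphi(A\cap n)$; so $\mrm{Fin}(\varphi)$ is visibly $F_\sigma$, and $\mrm{Exh}(\varphi)=\bigcap_k\bigcup_n\{A:\varphi(A\setminus n)\le 1/k\}$ is visibly $F_{\sigma\delta}$. The P-ideal property of $\mrm{Exh}(\varphi)$ comes from the standard diagonal choice $B=\bigcup_m(A_m\setminus n_m)$ with $\varphi(A_m\setminus n_m)<2^{-m}$.

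For Mazur's direction I would first refine the $F_\sigma$ decomposition $\mc{I}=\bigcup_n F_n$ so that each $F_n$ is closed and hereditary and the filtration is additive, in the sense that the union of one member of $F_n$ with one of $F_m$ lies in $F_{n+m}$; this is a routine packaging step. Setting $\rho(s)=\min\{n:s\in F_n\}$ for finite $s$ and $\varphi(A)=\sup\{\rho(s):s\in [A]^{<\om}\}$ then produces an lsc submeasure: monotonicity and lsc are built into the sup definition, and subadditivity uses the additivity just arranged. Finally $\mc{I}=\mrm{Fin}(\varphi)$, because $\varphi(A)\le n$ forces every finite $s\subseteq A$ into $F_n$, and closedness of $F_n$ in $\,\!^\om 2$ then forces $A\in F_n\subseteq\mc{I}$, while hereditariness gives the reverse inclusion.

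The second forward direction---Solecki's theorem for analytic P-ideals---will be the main obstacle, and I would only sketch its structure. The key input is that every analytic P-ideal, viewed as an analytic subgroup of $(\mc{P}(\om),\triangle)$, is \emph{polishable}: it admits a Polish group topology refining its inherited topology. The P-ideal property is precisely what drives the completeness, by a pseudounion-based Cauchy argument. From a compatible translation-invariant complete metric $d$ on $\mc{I}$ I would set $\varphi(A)=\sup_n d(A\cap n,\0)$ (with value $\infty$ when unbounded). Lower semicontinuity is then automatic from the sup definition; subadditivity comes from the triangle inequality for $d$ on $(\mc{P}(\om),\triangle)$; and the equivalence $A\in\mrm{Exh}(\varphi)\Leftrightarrow A\in\mc{I}$ unfolds to ``$A\setminus n\to\0$ in $d$'', which is precisely the characterisation of $\mc{I}$ in the Polish group topology.

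For the third item, given an $F_\sigma$ P-ideal I would apply the second direction to obtain $\mc{I}=\mrm{Exh}(\psi)$ and separately the first to obtain $\mc{I}=\mrm{Fin}(\theta)$. A suitable rescaled maximum $\varphi=\max\{\psi,\theta'\}$ then satisfies $\mrm{Fin}(\varphi)=\mrm{Exh}(\varphi)=\mc{I}$, because the Exh-side is controlled by $\psi$ and the Fin-side by $\theta'$. Overall the hardest step is Solecki's: the combinatorial construction handling the $F_\sigma$ case breaks down for general analytic P-ideals, since there is no internal $F_\sigma$ filtration to measure against, and polishability is what replaces it.
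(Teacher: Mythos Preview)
The paper does not prove this theorem at all: it is quoted from \cite{Mazur} and \cite{solecki} and used as a black box, so there is no ``paper's proof'' to compare with. Your outlines of the backward implications and of Mazur's direction are the standard ones and are fine; your sketch of Solecki's direction via polishability captures the right idea, though getting an lsc \emph{submeasure} (subadditive for $\cup$, not just for $\triangle$) out of the invariant metric needs more care than the bare triangle inequality.

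The real gap is in your third bullet. From $\varphi=\max\{\psi,\theta'\}$ you get $\mrm{Exh}(\varphi)=\mrm{Exh}(\psi)\cap\mrm{Exh}(\theta')$, so you would need $\mc{I}\subseteq\mrm{Exh}(\theta')$, i.e.\ $\|A\|_{\theta'}=0$ for every $A\in\mc{I}$. Nothing in Mazur's construction guarantees this, and no ``rescaling'' $\theta'=f\circ\theta$ can help: $\|A\|_{\theta'}=f(\|A\|_\theta)$, and $\|A\|_\theta$ can be a fixed positive number for infinite $A\in\mc{I}$. Concretely, if your $F_n$'s are chosen with $F_0=\{\0\}$ (a perfectly legal normalisation), then $\theta(B)\geq 1$ for every nonempty $B$, hence $\|A\|_\theta\geq 1$ for every infinite $A$ and $\mrm{Exh}(\theta)=\mrm{Fin}$. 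In that case $\mrm{Exh}(\max\{\psi,\theta\})=\mrm{Fin}\ne\mc{I}$. The actual argument for the $F_\sigma$ P-ideal case goes back into Solecki's construction and shows that the particular $\psi$ produced there already satisfies $\mrm{Fin}(\psi)=\mc{I}$ when $\mc{I}$ is $F_\sigma$; it is not a simple post-hoc combination of two independently obtained submeasures.
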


In particular, analytic P-ideals are $F_{\sigma\delta}$. When working with $\mrm{Exh}(\varphi)$, we can always assume that $\varphi(\{n\})>0$ for every $n$, and that $\varphi(\om)=\|\om\|_\varphi=1$: Let $\varphi_0(A)=\varphi(A)+\sum_{n\in A}2^{-n}$, $\varphi_1(A)=\min(\varphi_0(A),1)$, $\varphi_2(A)=\varphi_1(A)/\|\om\|_{\varphi_1}$, and $\varphi_3(A)=\min(\varphi_2(A),1)$, then $\mrm{Exh}(\varphi_i)=\mrm{Exh}(\varphi)$ for $i=0,1,2,3$ and $\varphi_3(\om)=\|\om\|_{\varphi_3}=1$.

\smallskip
As promised, we give an easy characterisation of nowhere tall analytic P-ideals:

\begin{fact}
Assume that $\mc{I}$ is a nowhere tall analytic P-ideal. Then $\mc{I}$ is a trivial modification of $\mrm{Fin}$ or $\mc{I}\simeq\{\0\}\otimes\mrm{Fin}$.
\end{fact}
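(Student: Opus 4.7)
My plan is to invoke Solecki's theorem (part of Theorem~\ref{char}) to write $\mc{I}=\mrm{Exh}(\varphi)$ with $\varphi$ lsc and $\varphi(\{n\})>0$ for every $n\in\om$, and then extract from the nowhere-tall hypothesis enough rigidity to identify $\mc{I}$ up to isomorphism. The first step is a submeasure-theoretic reformulation:
\[ \mc{I}\text{ is nowhere tall}\iff\forall\;X\in\mc{I}^+\;\limsup_{n\in X}\varphi(\{n\})>0.\]
For the forward direction, if $\varphi(\{n\})\to 0$ along some $X\in\mc{I}^+$, every infinite $Y\subseteq X$ admits an infinite thinning $Z$ with $\sum_{n\in Z}\varphi(\{n\})<\infty$, so $\|Z\|_\varphi=0$ and hence $Z\in\mc{I}$; thus $\mc{I}\clrest X$ is tall. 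Conversely, if $Y=\{n\in X:\varphi(\{n\})\geq\epsilon\}$ is infinite, then every infinite $Z\subseteq Y$ satisfies $\|Z\|_\varphi\geq\epsilon$, so $\mc{P}(Y)\cap\mc{I}=[Y]^{<\om}$.

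Next I would partition $\om$ via the level sets $B_0=\{n:\varphi(\{n\})\geq 1\}$ and $B_k=\{n:1/(k+1)\leq\varphi(\{n\})<1/k\}$ for $k\geq 1$, and set $K=\{k:B_k\text{ is infinite}\}$. For every $k\in K$, any infinite $Y\subseteq B_k$ has $\|Y\|_\varphi\geq 1/(k+1)$, so $\mc{I}\clrest B_k=[B_k]^{<\om}$, and applying Step~1 to $\om$ yields $K\ne\0$. Letting $A=\bigsqcup_{k\in K}B_k$, I would then prove $\om\setminus A\in\mc{I}$: otherwise Step~1 gives $\epsilon>0$ with $\{n\in\om\setminus A:\varphi(\{n\})\geq\epsilon\}$ infinite, but such an $n$ must lie in some $B_k$ with $k\notin K$ and $k<1/\epsilon$, forcing this set into a finite union of finite sets---a contradiction.

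Two subcases finish the argument. If $|K|<\om$, then $X\cap A=\bigsqcup_{k\in K}(X\cap B_k)$ is a finite disjoint union, in $\mc{I}$ iff each piece (and hence the whole union) is finite; combined with $\om\setminus A\in\mc{I}$ this gives $\mc{I}=\{X:X\cap A\text{ is finite}\}$, a trivial modification of $\mrm{Fin}$. If $|K|=\om$, enumerate $K=\{k_i:i\in\om\}$ and prove
\[ \mc{I}=\big\{X\subseteq\om:X\cap B_{k_i}\text{ is finite for every }i\in\om\big\}.\]
The inclusion $\subseteq$ is immediate; for $\supseteq$, if some such $X$ had $X\cap A\in\mc{I}^+$, Step~1 would provide $\epsilon>0$ with $\{n\in X\cap A:\varphi(\{n\})\geq\epsilon\}$ infinite, whereas this set lies in $\bigsqcup_{i:k_i<1/\epsilon}(X\cap B_{k_i})$, a finite union of finite sets---contradiction. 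Finally, I would (re)distribute $\om\setminus A\in\mc{I}$ one element per block among the $B_{k_i}$'s (or dump it into $B_{k_0}$ if $\om\setminus A$ is finite), obtaining a partition $\om=\bigsqcup_i\tilde B_i$ into infinite blocks with $\mc{I}\clrest\tilde B_i=[\tilde B_i]^{<\om}$ and $\mc{I}=\{X:X\cap\tilde B_i\text{ finite for every }i\}$; any bijection $\om\to\om\times\om$ sending $\tilde B_i$ onto $\{i\}\times\om$ witnesses $\mc{I}\simeq\{\0\}\otimes\mrm{Fin}$.

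The main obstacle is the $\supseteq$ inclusion in the second subcase---showing that $\mc{I}$ is the \emph{maximal} ideal compatible with the $\mrm{Fin}$-restrictions on each $B_{k_i}$. The P-ideal hypothesis alone is of no help: it only produces a pseudounion of the finite sets $\{X\cap B_{k_i}\}_i$, realised trivially by $\0$. It is precisely the $\limsup$-reformulation from Step~1 that rules out ``across-block'' $\mc{I}^+$-sets with finite trace on every $B_{k_i}$, and this is what makes nowhere-tallness (rather than mere non-tallness) indispensable.
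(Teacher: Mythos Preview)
Your proof is correct. Both your argument and the paper's hinge on the same opening observation---that under the nowhere-tall hypothesis the ideal is completely determined by the point-masses $\varphi(\{n\})$ (your Step~1 is exactly the contrapositive of the paper's ``$\mc{I}=\mc{I}'$'')---but the case analyses diverge. The paper perturbs the sequence $x_n=\varphi(\{n\})$ to be injective and then reads off the three cases from the Cantor--Bendixson structure of $X=\{x_n\}$ (whether $0\in X'$, $0\in X''$). You instead partition $\om$ into the level sets $B_k=\{n:1/(k+1)\le\varphi(\{n\})<1/k\}$ and split on whether finitely or infinitely many of them are infinite. Your route is slightly more elementary: it avoids the artificial ``make the $x_n$ distinct'' step, and your $\supseteq$-verification in the $|K|=\om$ case is cleaner than tracking the second derived set. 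The paper's topological viewpoint, on the other hand, makes the trichotomy (and the exact two isomorphism types of trivial modifications of $\mrm{Fin}$) more visually transparent. Both arguments deliver the same content.
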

\begin{proof}
Let $\mc{I}=\mrm{Exh}(\varphi)$ for some lsc submeasure $\varphi$. First we show that $\mc{I}$ is nowhere tall iff $\mc{I}=\mc{I}':=\{A\subseteq\om:A$ is finite or $\lim_{n\in A}\varphi(\{n\})=0\}$.

First assume that $\mc{I}$ is nowhere tall. As $\mc{I}\subseteq\mc{I}'$ always holds, we show that if $A\in\mc{I}'$ then $A\in\mc{I}$. Assume on the contrary, that $A\notin\mc{I}$. Then there is an infinite $A'\subseteq A$ such that $\mc{I}\clrest A'=[A']^{<\om}$. If $B=\{b_0<b_1<b_2<\dots\}\subseteq A'$ such that $\varphi(\{b_n\})<2^{-n}$ for every $n$, then $B\in\mc{I}$, a contradiction. Conversely, assume now that $\mc{I}=\mc{I}'$, and let $X\in\mc{I}^+$. If $Y\subseteq X$ is infinite and $\inf\{\varphi(\{k\}):k\in Y\}>0$ then $\mc{I}\clrest Y=[Y]^{<\om}$ and hence $\mc{I}\clrest X$ is not tall.

Therefore, if $\mc{I}$ is a nowhere tall analytic P-ideal, then there is a sequence $x_n>0$ such that $\mc{I}=\{A\subseteq\om:A$ is finite or $\lim_{n\in A}x_n=0\}$. By modifying $x_n$, we can assume that $x_n\ne x_m$ for every $n\ne m$. Let $X=\{x_n:n\in\om\}$, $X'$ be set of accumulation points of $X$, and $X''=(X')'$, we know that $X'\supseteq X''$ are closed. We have the following three cases: (1) $0\notin X'$. Then $\mc{I}=\mrm{Fin}$. (2) $0\in X'\setminus X''$. Let $y=\min(X'\setminus\{0\})>0$. Now $A\subseteq\om$ belongs to $\mc{I}$ iff $|A\cap \{n:x_n\geq y\}|<\om$, hence $\mc{I}$ is a trivial modification of $\mrm{Fin}$. (3) $0\in X''$. Fix a sequence $y_0=\infty>y_1>y_2\dots$ tending to $0$ such that each $[y_{k+1},y_k)$ contains infinitely many $x_n$, and let $P_k=\{n:x_n\in [y_{k+1},y_k)\}$. Then $A\in\mc{I}$ iff $A\cap P_k$ is finite for every $k$, and hence $\mc{I}\simeq\{\0\}\otimes\mrm{Fin}$.
\end{proof}

\section{Degrees of destruction}\label{degrees}
Starting with the usual forcing destructibility of ideals, we define three notions of destroying ideals in forcing extensions:
\begin{df}
Let $\mc{I}$ be an analytic or coanalytic ideal on $\om$, $\mc{D}=[\om]^\om,\mc{I}^+$, or $\mc{I}^*$, and let $\PP$ be a forcing notion. We say that {\em $\PP$ can $\mc{D}$-destroy $\mc{I}$} if there is a $p\in\PP$ such that $p\vd$``$\exists$ $Y\in \mc{D}$ $\forall$ $A\in\mc{I}^V$ $|X\cap A|<\om$''. Mostly we will write ($\infty$-)destroy, $+$-destroy, and $*$-destroy instead of $[\om]^\om/\mc{I}^+/\mc{I}^*$-destroy.
\end{df}
Clearly, $*$-destruction implies $+$-destruction which implies $\infty$-destruction. All these notions can be reformulated with pseudounions: $\PP$ destroys $\mc{I}$ if it adds a pseudounion of $\mc{I}\cap V$ with infinite complement, it $+$-destroys $\mc{I}$ if it adds a pseudounion of $\mc{I}\cap V$ with $\mc{I}$-positive complement, and $\PP$ $*$-destroys $\mc{I}$ if it adds a pseudounion of $\mc{I}\cap V$ with complement in $\mc{I}^*$, i.e. a pseudounion which belongs to $\mc{I}$.

\smallskip
Our main goals is to deepen our understanding of $\infty/+/*$-destructibility of Borel ideals, in particular, to characterise which ideals a fixed ``nice'' forcing notion $\PP$ can $\infty/+/*$-destroy and to study the associated cardinal invariants of these ideals.

\smallskip
Let us take a look on forcing (in)destructibility in the context of cardinal invariants. If $I$ is an ideal on $X$, then its {\em additivity, cofinality, uniformity, covering numbers} are defined as follows:
\begin{align*}
\add(I) & = \min\big\{|J|:J\subseteq I\;\text{and}\;\bigcup J\notin I\big\}\\
\cof(I) & = \min\big\{|D|:D\;\text{is cofinal in}\;(I,\subseteq)\big\}\\
\non(I) & = \min\big\{|Y|:Y\subseteq X\;\text{and}\;Y\notin I\big\}\\
\cov(I) & = \min\big\{|C|:C\subseteq I\;\text{and}\;\bigcup C=X\big\}
\end{align*}
In the case of ideals on countable underlying sets, most of these invariants equal $\om$. In \cite{cardinvanalp}, for tall ideals on $\om$ the following cardinal invariants were introduced:
\begin{align*}
\add^*(\mc{I})&= \min\big\{|\mc{U}|:\mc{U}\;\text{is unbounded in}\;(\mc{I},\subseteq^*)\big\}\\
\cof^*(\mc{I}) & = \min\big\{|\mc{D}|:\mc{D}\;\text{is cofinal in}\;(\mc{I},\subseteq^*)\big\}\\
\non^*(\mc{I}) & = \min\big\{|\mc{Y}|:Y\subseteq [\om]^\om\;\text{and}\;\forall\;A\in\mc{I}\;\exists\;Y\in \mc{Y}\;|A\cap Y|<\om\big\}\\
\cov^*(\mc{I}) & = \min\big\{|\mc{C}|:\mc{C}\subseteq \mc{I}\;\text{and}\;\forall\;Y\in [\om]^\om\;\exists\;C\in\mc{C}\;|Y\cap C|=\om\big\}
\end{align*}
Notice that in this context, destroying a Borel $\mc{I}$ means increasing $\cov^*(\mc{I})$, and similarly, $*$-destroying $\mc{I}$ is associated to increasing $\add^*(\mc{I})$. Strictly speaking, these invariants are not new, they can be seen as usual additivity, cofinality, uniformity, and covering numbers (see \cite{cardinvanalp}): If $\mc{I}$ is a tall ideal on $\om$, then let $\wh{\mc{I}}$ be the ideal on $[\om]^\om$ generated by all sets of the form $\wh{A}=\{X\in [\om]^\om:|A\cap X|=\om\}$, $A\in\mc{I}$. Now it is trivial to see that
\[ \mrm{inv}^*(\mc{I})=\mrm{inv}(\wh{\mc{I}})\;\,\text{for all four invariants above.}\]

To put $+$-destructibility into the context of cardinal invariants, we can easily generalise these cardinals by replacing $[\om]^\om$ with $\mc{I}^+$ in their definitions. In general, if $\mc{I}$ is an ideal on $\om$  then we define \[ \mrm{inv}^*(\mc{I},\mc{D}):=\mrm{inv}(\wh{\mc{I}}\clrest\mc{D})\;\text{where}\;
\mc{D}=[\om]^\om,\mc{I}^+,\mc{I}^*,\]
in particular, $\mrm{inv}^*(\mc{I})=\mrm{inv}^*(\mc{I},[\om]^\om)$. To avoid tedious notations, especially if the notation for an ideal or for its underlying set is too long (e.g. $\mrm{Conv}$ or $\om\times\om$), we will write $\mrm{inv}^*(\mc{I},\infty)=\mrm{inv}^*(\mc{I},[\om]^\om)$, $\mrm{inv}^*(\mc{I},+)=\mrm{inv}^*(\mc{I},\mc{I}^+)$, and $\mrm{inv}^*(\mc{I},*)=\mrm{inv}^*(\mc{I},\mc{I}^*)$.

When exactly are these cardinals defined? The invariants  $\add,\cof,\non,\cov$ are defined for proper ideals containing all finite subsets of their underlying sets. Properness is not an issue because if $\mc{I}$ is proper, then $\mc{I}^*\notin \wh{\mc{I}}$. Considering finite subsets of the underlying set, $[\om]^\om=\bigcup\wh{\mc{I}}$ iff $\mc{I}^+=\bigcup (\wh{\mc{I}}\clrest \mc{I}^+)$ iff $\mc{I}$ is tall; and $\mc{I}^*=\bigcup(\wh{\mc{I}}\clrest\mc{I}^*)$ iff $\mc{I}$ is not a trivial modification of $\mrm{Fin}$. To simply our list of conditions in the forthcoming statements, whenever we work with cardinal invariants of the form $\mrm{inv}^*(\mc{I},\infty)$ or $\mrm{inv}^*(\mc{I},+)$, we will always assume that $\mc{I}$ is tall. Similarly, when $\mrm{inv}^*(\mc{I},*)$ is involved, we will assume that $\mc{I}$ is not a trivial modification of $\mrm{Fin}$.

The cardinal invariants $\mrm{inv}^*(\mc{I},\infty)$ have been extensively studied (see e.g. \cite{cardinvanalp}, \cite{meza}, \cite{nwd}, and \cite{weakq}) but we know much less about e.g. $\mrm{inv}^*(\mc{I},+)$ (it was introduced in \cite{janajorg}). As we will mainly focus on uniformity and covering, let us reformulate these coefficients without referring to $\wh{\mc{I}}$:
\begin{align*}
\non^*(\mc{I},\infty) &=\min\big\{|\mc{Y}|:\mc{Y}\subseteq[\om]^\om,\;\forall\;A\in\mc{I}\;\exists\;Y\in\mc{Y}\;|A\cap Y|<\om\big\} \\
\cov^*(\mc{I},\infty) &=\min\big\{|\mc{C}|:\mc{C}\subseteq\mc{I},\;\forall\;Y\in[\om]^\om\;\exists\;C\in\mc{C}\;|Y\cap C|=\om\big\}\\
\non^*(\mc{I},+)&=\min\big\{|\mc{Y}|:\mc{Y}\subseteq\mc{I}^+,\;\forall\;A\in\mc{I}\;\exists\;Y\in\mc{Y}\;|A\cap Y|<\om\big\}\\
\cov^*(\mc{I},+) &=\min\big\{|\mc{C}|:\mc{C}\subseteq\mc{I},\;\forall\;Y\in\mc{I}^+\;\exists\;C\in\mc{C}\;|Y\cap C|=\om\big\}\\
\non^*(\mc{I},*)&=\min\big\{|\mc{Y}|:\mc{Y}\subseteq\mc{I}^*,\;\forall\;A\in\mc{I}\;\exists\;Y\in\mc{Y}\;|A\cap Y|<\om\big\}\\
\cov^*(\mc{I},*) &=\min\big\{|\mc{C}|:\mc{C}\subseteq\mc{I},\;\forall\;Y\in\mc{I}^*\;\exists\;C\in\mc{C}\;|Y\cap C|=\om\big\}
\end{align*}

\begin{obss}\label{cardobs}$ $
\begin{itemize}
\item[(1)]
Let $\mc{I}$ be an arbitrary tall ideal on $\om$ and $A,B\in\mc{I}$. Then the following are equivalent: (i) $A\subseteq^* B$, (ii) $\wh{A}\subseteq\wh{B}$, (iii) $\wh{A}\cap\mc{I}^+\subseteq\wh{B}\cap\mc{I}^+$, and (iv) $\wh{A}\cap\mc{I}^*\subseteq\wh{B}\cap\mc{I}^*$. (i)$\to$(ii)$\to$(iii)$\to$(iv) are trivial, and (iv) implies (i) because if $|A\setminus B|=\om$ then $\om\setminus B\in (\wh{A}\cap\mc{I}^*) \setminus\wh{B}$.
\item[(2)] Some of these new coefficients are actually equal:
\begin{align*}
\cof^*(\mc{I},\infty) & =\cof^*(\mc{I},+)=\cof^*(\mc{I},*)=\non^*(\mc{I},*),\\
\add^*(\mc{I},\infty) & =\add^*(\mc{I},+)=\add^*(\mc{I},*)=\cov^*(\mc{I},*).
\end{align*}
$\add^*(\mc{I},\infty) \leq \add^*(\mc{I},+)\leq \add^*(\mc{I},*)\leq \cov^*(\mc{I},*)$ are trivial (actually, (1) implies that the three additivities are equal), and $\cov^*(\mc{I},*)=\add^*(\mc{I},\infty)$ because an $\mc{A}\subseteq \mc{I}$ is $\subseteq^*$-unbounded in $\mc{I}$ iff $\forall$ $F\in\mc{I}^*$ $\exists$ $A\in\mc{A}$ $|F\cap A|=\om$. This argument can be ``dualised'', and we obtain the equalities $\cof^*(\mc{I},\infty) =\cof^*(\mc{I},+)=\cof^*(\mc{I},*)=\non^*(\mc{I},*)$.
\item[(3)] The remaining cardinal coefficients in a diagram (where $a\to b$ stands for $a\leq b$ and $\mc{D}=[\om]^\om,\mc{I}^+,\mc{I}^*$):
\begin{diagram}
 &  \non^*(\mc{I},\infty) & \rTo & \non^*(\mc{I},+)& \rTo & \non^*(\mc{I},*) & = \cof^*(\mc{I},\mc{D})\\
& \uTo & & & &  \uTo \\
\add^*(\mc{I},\mc{D}) = & \cov^*(\mc{I},*) & \rTo & \cov^*(\mc{I},+) & \rTo & \cov^*(\mc{I},\infty)
\end{diagram}
\item[(4)] If $\mc{I}$ is tall then $\cov^*(\mc{I},\infty)>\om$. If $\mc{I}$ is Borel and $\cov^*(\mc{I},+)=\om$, then no forcing notion can $+$-destroy $\mc{I}$. If $\mc{I}$ is Borel and $\cov^*(\mc{I},*)=\om$ (i.e. $\mc{I}$ is not a P-ideal), then no forcing notion can $*$-destroy $\mc{I}$ (see also in Section 1). The first statement is trivial. To show the second and third, notice that if $\mc{I}$ is Borel, then ``$(A_n)_{n\in\om}$ witnesses $\cov^*(\mc{I},+/*)=\om$'' is a $\Ubf{\Pi}^1_1$ property.

\item[(5)] If $\mc{I}\leq_\mrm{KB}\mc{J}$ are Borel, then (5a-i) $\cov^*(\mc{I},\infty)\geq\cov^*(\mc{J},\infty)$ and (5a-ii) $\non^*(\mc{I},\infty)\leq\non^*(\mc{J},\infty)$; and (5b-i) if $\PP$ cannot destroy $\mc{I}$ then $\PP$ cannot destroy $\mc{J}$ either, and dually, (5b-ii) if $\vd_\PP[\om]^\om\cap V\in\wh{\mc{I}}$ then $\vd_\PP[\om]^\om\cap V\in\wh{\mc{J}}$.

\item[(6)] If $\mc{I}\leq_\mrm{K}\mc{J}$ are Borel, then (5a-i) and (5b-i) hold, (6a)
  $\cov^*(\mc{I},+)\geq\cov^*(\mc{J},+)$ and $\non^*(\mc{I},+)\leq\non^*(\mc{J},+)$; and (6b) if $\PP$ cannot $+$-destroy $\mc{I}$ then $\PP$ cannot $+$-destroy $\mc{J}$ either, and dually, if $\vd_\PP\mc{I}^+\cap V\in\wh{\mc{I}}$ then $\vd_\PP\mc{J}^+\cap V\in\wh{\mc{J}}$.
\end{itemize}
\end{obss}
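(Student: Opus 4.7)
The strategy is to unpack each item separately: (1) is the feeder for (2) and (3); $\Ubf{\Pi}^1_1$-absoluteness (Shoenfield) handles (4); and push-forward / pull-back constructions along Kat\v{e}tov(-Blass) reductions handle (5)--(6). The shared bookkeeping is that $\mrm{inv}(\wh{\mc{I}}\clrest\mc{D})$ responds monotonically as $\mc{D}$ shrinks along $\mc{I}^*\subseteq\mc{I}^+\subseteq[\om]^\om$. For (1), the chain (i)$\Rightarrow$(ii)$\Rightarrow$(iii)$\Rightarrow$(iv) is immediate from the definition of $\wh A$: an almost-containment $A\subseteq^* B$ propagates to $\wh A\subseteq\wh B$, and restricting to $\mc{I}^+$ or $\mc{I}^*$ only weakens the containment. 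For (iv)$\Rightarrow$(i) I use the hint in the statement: if $|A\setminus B|=\om$, then $F:=\om\setminus B\in\mc{I}^*$ lies in $\wh A\cap\mc{I}^*$ but, being disjoint from $B$, is not in $\wh B$.

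For (2), translate $\subseteq^*$-unboundedness of $\mc{A}\subseteq\mc{I}$: it says that for each $B\in\mc{I}$ some $A\in\mc{A}$ has $|A\setminus B|=\om$, equivalently for each $F=\om\setminus B\in\mc{I}^*$ some $A\in\mc{A}$ has $F\in\wh A$. Thus an unbounded family is exactly a covering family for $\wh{\mc{I}}\clrest\mc{I}^*$, yielding $\add^*(\mc{I},\infty)=\cov^*(\mc{I},*)$. By (1), $\subseteq^*$-unboundedness is already detected on $\mc{I}^*$, so the three $\add^*$'s coincide; the dual translation (cofinality in $(\mc{I},\subseteq^*)$ equals being a uniformity witness in $\wh{\mc{I}}\clrest\mc{I}^*$), together with (1), gives the parallel equalities $\cof^*(\mc{I},\mc{D})=\non^*(\mc{I},*)$. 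The diagram in (3) then encodes only the monotonicity---$\cov^*$ antitone and $\non^*$ monotone in $\mc{D}$---along the nested chain $\mc{I}^*\subseteq\mc{I}^+\subseteq[\om]^\om$.

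For (4), $\cov^*(\mc{I},\infty)>\om$ is a routine diagonal argument against any countable $(A_n)\subseteq\mc{I}$: since $\om\notin\mc{I}$, a standard zig-zag produces an infinite $Y$ with $|Y\cap A_n|<\om$ for every $n$. For the remaining two claims I use absoluteness: for Borel $\mc{I}$ and countable $(A_n)_{n\in\om}\subseteq\mc{I}$, the statement ``$(A_n)$ witnesses $\cov^*(\mc{I},+)=\om$'', namely $\forall Y\,(Y\in\mc{I}^+\to\exists n\,|Y\cap A_n|=\om)$, is $\Ubf{\Pi}^1_1$ in the parameters $(A_n)$; and similarly with $\mc{I}^*$ in place of $\mc{I}^+$. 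Shoenfield absoluteness then rules out any forcing adding a $Y\in\mc{I}^+$ (resp.\ $\mc{I}^*$) almost disjoint from all the $A_n$'s.

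Finally, (5) and (6) are pull-back / push-forward arguments along $f:\om\to\om$ realizing $\mc{I}\leq_\mrm{K(B)}\mc{J}$. Covering and $+$-covering families pull back through $f^{-1}$: if $\mc{C}\subseteq\mc{I}$ covers the relevant domain and $Y$ is such that $f[Y]$ is infinite (respectively, $\mc{I}$-positive---the latter because $Y\in\mc{J}^+$ forces $f[Y]\in\mc{I}^+$, otherwise $f^{-1}[f[Y]]\in\mc{J}$ would contradict $Y\subseteq f^{-1}[f[Y]]$), then some $C\in\mc{C}$ meets $f[Y]$ infinitely, so $|Y\cap f^{-1}[C]|=\om$. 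Finite-to-oneness of $f$ guarantees $f[Y]\in[\om]^\om$ for every $Y\in[\om]^\om$; in the pure K-case one still recovers (5a-i) by enlarging $\mc{C}$ with the countably many sets $f^{-1}[\{n\}]\in\mc{J}$. Uniformity witnesses push forward through $f$: when $f$ is finite-to-one and $Y\in[\om]^\om$, $f[Y]\in[\om]^\om$ and the inclusion $f[Y]\cap A\subseteq f[Y\cap f^{-1}[A]]$ for $A\in\mc{I}$ supplies the finite bound, giving (5a-ii); without finite-to-oneness $f[Y]$ may drop to a finite set and the argument collapses. The destructibility conclusions (5b) and (6b) are the same constructions applied to a generic $Y\in V^\PP$ with $f\in V$ fixed. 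The only real care---and this is the sole point where obstacles arise---is keeping track of which direction of the push/pull fails without finite-to-oneness, which is precisely why (6) drops the $\infty$-versions of the uniformity and indestructibility statements.
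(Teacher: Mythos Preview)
Your proof is correct and follows essentially the same approach as the paper, which in fact embeds its entire argument inside the statement of the Observations and leaves items (5) and (6) as bare assertions. You have simply fleshed out the details that the paper omits---the push-forward/pull-back along a K(B)-reduction, the observation that $f[Y]\in\mc{I}^+$ whenever $Y\in\mc{J}^+$, and the extra countable family $\{f^{-1}[\{n\}]:n\in\om\}$ needed for (5a-i) when $f$ is not finite-to-one---all of which are the intended arguments. One cosmetic remark: for item (4) you invoke Shoenfield, but since the relevant statement is $\Ubf{\Pi}^1_1$, Mostowski absoluteness already suffices (the paper just says ``is a $\Ubf{\Pi}^1_1$ property'' and leaves it at that).
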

Point (6) above, more precisely the fact that K-reducibility is enough to obtain ``half'' of the consequences of KB-reducibility from point (5) is not so surprising after our remark on Kat\v{e}tov and Kat\v{e}tov-Blass reductions between our main examples (see after the definitions of these examples). Moreover if $\mc{I}\ne\mrm{Fin}$, $\mc{J}$ is a P-ideal, and $\mc{I}\leq_\mrm{K}\mc{J}$, then $\mc{I}\leq_\mrm{KB}\mc{J}$ holds as well: Fix a K-reduction $f:\om\to\om$ (that is, $f^{-1}[A]\in\mc{J}$ for every $A\in\mc{I}$) which is not finite-to-one and a $B\in\mc{J}$ such that $f^{-1}[\{n\}]\subseteq^* B$ for every $n$ (in particular, $B$ is infinite). Let $F_n=f^{-1}[\{n\}]\setminus B$ and fix an infinite element $A\in\mc{I}$. Define $g:\om\to\om$ such that $g\clrest F_n\equiv n$ for every $n$ and $g\clrest B$ is a bijection between $B$ and $A$. Then $g$ is a KB-reduction.

\smallskip
One may have noticed that $*$-destructibility, more precisely, the effect of reducibility between ideals on $*$-destructibility is missing from the list of our basic observations above. We will need a more general notion of reduction between ideals, see Section \ref{secstardest} for details.

\smallskip
We give a combinatorial characterisation of $\infty/+/*$-destructibility of Borel ideals by forcing notions of the form $\PP_I$. Unlike in the case of the original Theorem \ref{origchar}, we can work with arbitrary Polish spaces, do not have to understand trace ideals, and do not need continuous reading of names. Of course, this does not make this characterisation ``better'' (and it is certainly not ``deeper'') but it provides a new approach to forcing indestructibility of ideals.

In \cite{covprop}, based on a result from \cite{marci}, the authors introduced and studied the following notion: Let $X$ be an uncountable Polish space, $I$ be a $\sigma$-ideal on $X$, and $\mc{J}$ be an ideal on $\om$. Assuming that $X$ is clear from the context, we say that $I$ has the {\em $\mc{J}$-covering property}, $\mc{J}$-c.p. if for every {\em $I$-almost everywhere infinite-fold cover} $(B_n)_{n\in\om}$ of $X$ by Borel set, that is, $\{x\in X:\{n\in\om:x\in B_n\}$ is finite$\}\in I$, there is an $S\in \mc{J}$ (a ``small'' index set) such that $(B_n)_{n\in S}$ is still an $I$-a.e. infinite fold cover of $X$. It turned out that this property is a strong variant of forcing indestructibility: If $\PP_I$ is proper and $I$ has the $\mc{J}$-c.p., then $\PP_I$ cannot destroy $\mc{J}$. In general, the covering property is stronger:  $\mrm{Fin}\otimes\mrm{Fin}$ is Cohen-indestructible but $\mc{M}$ does not have the $\mrm{Fin}\otimes\mrm{Fin}$-c.p. See \cite{covprop} for more results about this property.

We show that a natural weak variant of the covering property is equivalent to forcing indestructibility, moreover, that the appropriate modifications work for $+/*$-indestructibility as well.

\begin{thm}\label{wcpfi} Let $\mc{J}$ be a Borel ideal on $\om$, $\mc{D}=[\om]^\om,\mc{J}^+$, or $\mc{J}^*$, and let $I$ be a $\sigma$-ideal on a Polish space $X$ such that $\PP_I$ is proper. Then $\PP_I$ cannot $\mc{D}$-destroy $\mc{J}$ if, and only if for every sequence $(B_n)_{n\in\om}$ of Borel subsets of $X$
\begin{align*}
\textbf{IF} &\;\;\;\big\{x:\big\{n\in\om:x\in B_n\}\in\mc{D}\big\}\in I^+,\\ \textbf{THEN} &\;\;\;\big\{x:\big|\big\{n\in S:x\in B_n\big\}\big|=\om\big\}\in I^+\;\;\text{for some}\;S\in\mc{J}.
\end{align*}
\end{thm}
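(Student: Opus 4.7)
The plan is to translate the combinatorial condition on a sequence $(B_n)_{n\in\om}$ of Borel subsets of $X$ into a statement about a canonical $\PP_I$-name for a subset of $\om$. Any sequence $(B_n)$ gives rise to a name $\mr{Y} = \{n \in \om : r_I \in B_n\}$, and conversely for any name $\mr{Y}$ for a subset of $\om$ we can pick Borel sets $B_n$ representing the Boolean values $[\![n \in \mr{Y}]\!]$ in the $\sigma$-complete quotient $\mc{B}(X)/I$, so that $\vd (n \in \mr{Y} \leftrightarrow r_I \in B_n)$. Under this correspondence the Borel set $B^\mc{D} := \{x : \{n : x \in B_n\} \in \mc{D}\}$ is exactly the ``region'' where $\mr{Y} \in \mc{D}$ holds at the generic point, and for $S \subseteq \om$ the set $C_S := \{x : |\{n \in S : x \in B_n\}| = \om\}$ is the region where $|\mr{Y} \cap S| = \om$. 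Both are Borel, since $\mc{J}$ and hence $\mc{D} \subseteq \,\!^\om 2$ are Borel.

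For the $(\Leftarrow)$ direction I would assume the covering condition and derive a contradiction from a hypothetical witness to $\mc{D}$-destruction: let $p \in \PP_I$ and $\mr{Y}$ satisfy $p \vd \mr{Y} \in \mc{D}$ and $p \vd \forall A \in \mc{J}^V\,|\mr{Y} \cap A| < \om$. Replacing each representative $B_n$ by $B_n \cap p$, I may arrange $B_n \subseteq p$ while preserving $p \vd (n \in \mr{Y} \leftrightarrow r_I \in B_n)$. Crucially, $\0 \notin \mc{D}$ in all three cases, so $B^\mc{D} \subseteq p$; combined with $p \vd r_I \in B^\mc{D}$, this gives $B^\mc{D} = p$ modulo $I$, so in particular $B^\mc{D} \in I^+$. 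The covering condition then yields $S \in \mc{J}$ with $C_S \in I^+$, and $C_S \subseteq p$ (same argument), so $C_S$ is a condition extending $p$. But $C_S$ forces $|\mr{Y} \cap S| = \om$ while inheriting $p \vd |\mr{Y} \cap S| < \om$ (since $S \in \mc{J} = \mc{J}^V$), a contradiction.

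For the $(\Rightarrow)$ direction, given $(B_n)$ with $B^\mc{D} \in I^+$, I would take $p := B^\mc{D}$ as a condition and $\mr{Y} := \{n : r_I \in B_n\}$ as a name; by construction $p \vd \mr{Y} \in \mc{D}$. Since $\PP_I$ cannot $\mc{D}$-destroy $\mc{J}$, no condition below $p$ can force ``$|\mr{Y} \cap A| < \om$ for all $A \in \mc{J}^V$'', so a routine density argument (the set of such conditions is dense below any non-witness) produces $q \leq p$ and a specific $S \in \mc{J}$ with $q \vd |\mr{Y} \cap S| = \om$, i.e.\ $q \vd r_I \in C_S$. This forces $q \setminus C_S \in I$, and hence $C_S \in I^+$, as required. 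The main technical obstacle is purely bookkeeping---justifying the Borel representation of names for subsets of $\om$ in $\mc{B}(X)/I$ and tracking how restriction to $p$ interacts with $B^\mc{D}$ and $C_S$---with properness ensuring that the standard forcing machinery behind the name $\mr{Y}$ behaves as expected.
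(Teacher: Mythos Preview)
Your proposal is correct and follows essentially the same route as the paper's proof: translate between sequences $(B_n)$ and names $\mr{Y}=\{n:r_I\in B_n\}$, identify $B^{\mc{D}}$ with the condition forcing $\mr{Y}\in\mc{D}$, and $C_S$ with the condition forcing $|\mr{Y}\cap S|=\om$. The one place where the paper is more precise is in the $(\Leftarrow)$ direction: rather than asserting that Boolean values $[\![n\in\mr{Y}]\!]$ are represented by Borel sets in the (merely $\sigma$-complete) quotient $\mc{B}(X)/I$, the paper invokes \emph{Borel reading of names} (\cite[Prop.~2.3.1]{zap}) to obtain, below some $C'\leq C$, a Borel function $f:C'\to\mc{D}$ with $C'\vd f(\mr{r}_I)=\mr{Y}$, and then sets $B_n=\{x\in C':n\in f(x)\}$; your Boolean-value formulation is morally the same but, strictly speaking, only becomes available after this strengthening step, so you should phrase it that way.
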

\begin{proof}
Let $\mc{D}=[\om]^\om$ or $\mc{J}^+$ or $\mc{J}^*$ accordingly.

The ``if'' direction: Assume on the contrary that $\Vdash_{\mathbb{P}_I}\mr{Y}\in \mc{D}$ and $C\Vdash\forall$ $A\in\mc J^V$ $|\mr{Y}\cap A|<\om$ for some $C\in\mathbb{P}_I$. Applying the {\em Borel reading of names} (see \cite[Prop. 2.3.1]{zap}), there are a $C'\in\mathbb{P}_I$, $C'\leq C$ and a Borel function $f:C'\to \mc{D}$ (coded in the ground model) such that $C'\Vdash_{\mathbb{P}_I}f(\mr{r}_I)=\mr{Y}$ where $\mr{r}_I$ is the generic real. For $n\in \om$ define $B_n=\{x\in C':n\in f(x)\}$. Then $B_n=f^{-1}[\{A\subseteq \om:n\in A\}]$ is Borel and $C'=\{x\in X:\{n\in\om:x\in B_n\}\in\mc{D}\}\in I^+$. Applying our assumption, there is an $S\in\mc{J}$ such that $C''=\{x\in C':|\{n\in S:x\in B_n\}|=\om\}\in I^+$. Notice that $C''=\{x\in C':|f(x)\cap S|=\om\}=f^{-1}[\{A\subseteq\om:|A\cap S|=\om\}]$ is also Borel and hence it is a condition below $C'$, and of course $C''\Vdash_{\mathbb{P}_I}|f(\mr{r}_I)\cap S|=\om$, a contradiction.

\smallskip
The ``only if'' direction: Let $(B_n)_{n\in\om}$ be a sequence of Borel subsets of $X$ such that $\{x:\{n\in\om:x\in B_n\}\in\mc{D}\}=C\in I^+$. Notice that $C$ is Borel because $g:X\to \mc{P}(\om)$, $g(x)=\{n\in\om:x\in B_n\}$ is a Borel function and $C=g^{-1}[\mc{D}]$. In particular, $C\in\PP_I$, $C\vd \mr{Y}:=\{n\in\om:\mr{r}_I\in B_n\}\in\mc{D}$, and hence (as $\PP_I$ cannot $\mc{D}$-destroy $\mc{J}$) there are a $C'\leq C$ and an $S\in\mc{J}$ such that $C'\vd \mr{Y}\cap S=|\{n\in S:\mr{r}_I\in B_n\}|=\om$, equivalently, $\{x\in C':|\{n\in S:x\in B_n\}|<\om\}\in I$, and hence  $\{x:|\{n\in S:x\in B_n\}|=\om\}\in I^+$.
\end{proof}

\begin{rem}
One may wonder what the exact role of CRN was in the original characterisation of forcing indestructibility of ideals (see Theorem \ref{origchar}). The ideal $I$ satisfies the {\em continuous reading of names}, if for every Polish space $Y$, $B\in\PP_I$, and Borel $g:B\to \,\!^\om 2$, there is a $C\in\PP_I$, $C\subseteq B$ such that $g\clrest C$ is continuous (in particular, the function $f$ in the application of Borel reading of names above can be chosen as continuous). For example, the following properties imply CRN: (a) $\PP_I$ is   $\,\!^\om\om$-bounding (e.g. $\mc{N}$ and $[\,\!^\om 2]^{\leq\om}$, i.e. the random and Sacks forcings); (b) $I$ is $\sigma$-generated by closed sets (e.g. $\mc{M}$ and $\mc{K}_\sigma$, i.e. the Cohen and Miller forcings); (c) the ``natural'' ideal generating the Hechler forcing $\mbb{D}$; (d) the ideal generating the Laver forcing $\mbb{L}$; (e) the ideal generating $\mbb{M}(\mc{I}^*)$ if $\mc{I}$ is a P-ideal; on the other hand, regardless its presentation, the eventually different real forcing does not have the CRN (for more details see \cite[Section 3.1]{zap} and \cite{quot}).

We know that under CRN the following are equivalent: (a) $\PP_I$ cannot destroy $\mc{J}$, and (b) $\mc{J}\nleq_\mrm{K}\mrm{tr}(I)\clrest X$ for every $X\in\mrm{tr}(I)^+$. We show that without assuming CRN, (a) still implies (b). Assume on the contrary that $f:X\to \om$ witnesses $\mc{J}\leq_\mrm{K} \mrm{tr}(I)\clrest X$ for some $X\in\mrm{tr}(I)^+$. Then $[X]_\delta$ is an $I$-positive $G_\delta$ set. Define the Borel sets $B_n=\{x\in [X]_\delta: \exists$ $k$ $(x\clrest k\in X$ and $f(x\clrest k)=n)\}$. Now if $x\in [X]_\delta$, then $|\{n\in\om:x\in B_n\}|<\om$ iff $f[\{x\clrest k:k\in\om\}\cap X]\subseteq m$ for some $m$, in particular, $x\in\bigcup_{m\in\om} [f^{-1}[m]]_\delta\in I$, and hence $\{x\in [X]_\delta:|\{n\in\om:x\in B_n\}|=\om\}\in I^+$. Applying Theorem \ref{wcpfi}, there is an $S\in\mc{J}$ such that $\{x\in [X]_\delta:|\{n\in S:x\in B_n\}|=\om\}\in I^+$ but $\{x\in [X]_\delta:|\{n\in S:x\in B_n\}|=\om\}=[f^{-1}[S]]_\delta\in I$, a contradiction.
\end{rem}

\section{Examples}\label{secexa}

In this section, we discuss some of our main examples $\mc{I}$, their cardinal invariants $\non^*(\mc{I},+)$ and $\cov^*(\mc{I},+)$, and their ($+$-)destructibility. For a survey on the invariants $\mrm{inv}^*(\mc{I},\infty)$, see e.g. \cite{meza}, \cite{weakq} (for $\mc{ED}$ and $\mc{ED}_\mrm{fin}$), and \cite{nwd} (for $\mrm{Nwd}$).

\subsection*{Easy examples: $\mrm{Fin}\otimes\mrm{Fin}$, $\mrm{Conv}$, and $\mrm{Ran}$}

\begin{exa} When working with cardinal invariants of $\mrm{Fin}\otimes\mrm{Fin}$ we will write $\mrm{Fin}^2=\mrm{Fin}\otimes\mrm{Fin}$. We know that $\non^*(\mrm{Fin}^2,\infty)=\om$, $\cov^*(\mrm{Fin}^2,\infty)=\mf{b}$, and $\cof^*(\mrm{Fin}^2,\infty)=\mf{d}$; and it is easy to show the following: \begin{itemize}
\item[(1)] $\non^*(\mrm{Fin}^2,+)=\mf{d}$ and $\cov^*(\mrm{Fin}^2,+)=\om$.
\item[(2a)] $\PP$ destroys $\mrm{Fin}\otimes\mrm{Fin}$ iff $\PP$ adds dominating reals.
\item[(2b)] No forcing notion can $+$-destroy $\mrm{Fin}\otimes\mrm{Fin}$.
\end{itemize}
\end{exa}
\begin{exa}
We know that $\non^*(\mrm{Conv},\infty)=\om$ and $\cov^*(\mrm{Conv},\infty)=\mf{c}$. We show the following:
\begin{itemize}
\item[(1)] $\non^*(\mrm{Conv},+)=\om$ and $\cov^*(\mrm{Conv},+)=\mf{c}$.
\item[(2)] If a forcing notion adds new reals then it $+$-destroys $\mrm{Conv}$.
\end{itemize}

(1): A countable base of the topology of $\mbb{Q}$ witnesses the uniformity. Now if $x^\al_n\to y_\al$ are convergent sequences, $x^\al_n\in\mbb{Q}$ and $\al<\ka<\mf{c}$, then there is a (nontrivial) convergent sequence $z_n\to z$, $z_n,z\in[0,1]\setminus\{y_\al:\al<\ka\}$, and if $\mbb{Q}\ni r^n_k\xrightarrow{k\to\infty}z_n$ such that $R=\{r^n_k:n,k\in\om\}$ has no accumulation points apart from the $z_n$'s and $z$, then $R\in\mrm{Conv}^+$ witnesses that $\{\{x^\al_n:n\in\om\}:\al<\om\}$ cannot be a covering family.

(2): Notice that if $\PP$ adds a new real then it adds a (nontrivial) sequence $(z_n)_{n\in\om}$ of new reals converging to a new real $z$, and hence the argument above shows that $\PP$ $+$-destroys $\mrm{Conv}$.
\end{exa}

\begin{exa}
We know that $\non^*(\mrm{Ran},\infty)=\om$ and $\cov^*(\mrm{Ran},\infty)=\mf{c}$; and applying $\mrm{Ran}\leq_\mrm{KB}\mrm{Conv}$ (see e.g. \cite{hrusakkatetov}), the last example, and Observations \ref{cardobs} (5) and (6), we know that
\begin{itemize}
\item[(1)] $\non^*(\mrm{Ran},+)=\om$ and $\cov^*(\mrm{Ran},+)=\mf{c}$;
\item[(2)] if a forcing notion adds new reals then it $+$-destroys $\mrm{Ran}$.
\end{itemize}
\end{exa}

\begin{prob}
Can we characterise those Borel ideals which are ($+$-)destroyed by every forcing notion introducing a new real? (Loosely speaking, we would like to characterise those Borel ideals $\mc{I}$ such that $\mrm{ZFC}$ proves $\cov^*(\mc{I},\infty/+)=\mf{c}$.) Does there exist a tall Borel ideal $\mc{I}$ which is destroyed by every forcing notion introducing a new real but $\mc{I}$ is not $+$-destroyed by all these forcing notions?
\end{prob}

\subsection*{Around $\mc{ED}$ and $\mc{ED}_\mrm{fin}$}

We know (see \cite{weakq}) that $\non^*(\mc{ED},\infty)=\om$, $\cov^*(\mc{ED},\infty)=\non(\mc{M})$, and $\cof^*(\mc{ED},\infty)=\mf{c}$.
\begin{prop}$ $
\begin{itemize}
\item[(1a)] $\non^*(\mc{ED},+)=\cov(\mc{M})$;
\item[(1b)] $\cov^*(\mc{ED},+)=\non(\mc{M})$;
\item[(2)] $\PP$ $+$-destroys $\mc{ED}$ iff $\PP$ destroys $\mc{ED}$ iff $\PP$ adds an eventually different real (that is, an $f\in\,\!^\om\om$ such that $|f\cap g|<\om$ for every $g\in\,\!^\om\om\cap V$).
\end{itemize}
\end{prop}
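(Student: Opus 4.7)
I would prove the three statements in parallel, leveraging the combinatorial characterizations of $\cov(\mc{M})$ and $\non(\mc{M})$ (Bartoszy\'nski): $\cov(\mc{M})=\min\{|F|:\forall g\in{}^\om\om\ \exists f\in F\ f\ne^* g\}$ and $\non(\mc{M})=\min\{|G|:\forall f\ \exists g\in G\ f=^\infty g\}$. The bridge to $\mc{ED}^+$ is the straightforward translation: $Y\in\mc{ED}^+$ is a.d.\ from $\mathrm{graph}(g)\in\mc{ED}$ iff $g(n)\notin Y_n$ for cofinitely many $n$, and a general $A\in\mc{ED}$ with $|A_n|\le m$ cofinitely is, mod finite, a union of $m$ graphs of ground-model functions, so a.d.\ from $\mc{ED}$ reduces to simultaneous a.d.\ from finitely many graphs.

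For (1a), the upper bound $\non^*(\mc{ED},+)\le\cov(\mc{M})$: I plan to route through the triangular ideal. The key observation is that $\non^*(\mc{ED},+)=\non^*(\mc{ED}_\mrm{fin},\infty)$ (the latter being well-known to equal $\cov(\mc{M})$). Indeed, if $\mc{Y}'\subseteq[\Delta]^\om$ witnesses $\non^*(\mc{ED}_\mrm{fin},\infty)$, then each $Y\in\mc{Y}'$ must itself be in $\mc{ED}_\mrm{fin}^+$ (since $|Y\cap Y|=\om$ excludes $Y\in\mc{ED}_\mrm{fin}$), hence in $\mc{ED}^+$; and for $A\in\mc{ED}$ we have $A\cap\Delta\in\mc{ED}_\mrm{fin}$, so some $Y\in\mc{Y}'$ satisfies $|Y\cap A|=|Y\cap A\cap\Delta|<\om$. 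For the converse direction, any $Y\in\mc{Y}$ can be thinned (using $\limsup|Y_n|=\om$) to a staircase $Y^s=\bigcup_i\{n_i\}\times W_i$ with $|W_i|=i+1$, which is Borel-isomorphic to $\Delta$ in a way carrying $\mc{ED}\!\upharpoonright\! Y^s$ to $\mc{ED}_\mrm{fin}$.

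For (1b), I expect an analogous reduction $\cov^*(\mc{ED},+)=\cov^*(\mc{ED}_\mrm{fin},\infty)=\non(\mc{M})$. The $\le$ direction uses $\mc{C}=\{\mathrm{graph}(g):g\in G\}\subseteq\mc{ED}$ for $G$ a $\non(\mc{M})$-witness and the selection $f_Y(n)=\min Y_n$ to locate, for each $Y\in\mc{ED}^+$, a $g\in G$ with $g=^\infty f_Y$ and then argue that the infinitely-equal set must (after passing to a subfamily of $G$ closed under shifts) land inside $D_Y$.

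For (2), I plan to prove the cycle. The trivial directions are $+$-destroys $\Rightarrow$ destroys, and ``adds e.d.\ real $\Rightarrow$ destroys'': if $f$ is e.d., then any $A\in\mc{ED}^V$ decomposes (mod finite) into finitely many ground-model graphs $\mathrm{graph}(g_1),\dots,\mathrm{graph}(g_m)$, and $|\mathrm{graph}(f)\cap\mathrm{graph}(g_i)|<\om$ for each $i$ by e.d., so $\mathrm{graph}(f)$ witnesses destruction. The direction ``destroys $\Rightarrow$ adds e.d.\ real'' is proved by taking any $Y$ witnessing destruction and defining $f(n)=\min Y_n$ on $D_Y=\{n:Y_n\ne\emptyset\}$; since $Y$ is a.d.\ from every ground-model $\{n\}\times\om$ and every $\om\times\{c\}$, one shows $D_Y$ may be taken cofinite (otherwise replace $Y$ by $Y\cup\{(n,k_n):n\notin D_Y\}$ where $k_n$ is defined from $Y$ via the successor-in-$D_Y$ trick, and verify a.d.\ is preserved), whence the filler issue vanishes and $\{n:f(n)=g(n)\}\subseteq(\om\setminus D_Y)\cup\{n:g(n)\in Y_n\}$ is finite for every $g\in V$. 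Finally, ``adds e.d.\ real $\Rightarrow$ $+$-destroys'' is the most delicate step: given $f$ e.d.\ in $V[G]$, I would apply Corollary \ref{mathias} once I know $\mc{ED}$ can be $+$-destroyed (say by $\mbb{M}(\mc{ED}^*)$, whose generic $Y_G$ satisfies $D_{Y_G}=\om$ and $\limsup|(Y_G)_n|=\om$ by density), and absorb $\mbb{M}(\mc{ED}^*)$ into $\PP$ using the e.d.\ real.

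The main obstacle I anticipate is the WLOG step in ``destroys $\Rightarrow$ adds e.d.\ real'': extending a partial e.d.\ function $h:D_Y\to\om$ to a total e.d.\ real without a priori knowing $D_Y$ is cofinite. A naive constant filler on $\om\setminus D_Y$ fails against constant $g$. My intended fix is to leverage that $Y$ is a.d.\ from every $\om\times\{c\}$ (so $\{n\in D_Y:\min Y_n=c\}$ is finite for each $c$) to define the filler as $f(n)=\min Y_{m(n)}$ where $m(n)$ is the least element of $D_Y$ above $n$, and then a careful column-by-column count (each fiber of the filler in $\om\setminus D_Y$ is absorbed by a single $c$ and is bounded by the width of a gap in $D_Y$) shows the collisions with any fixed $g\in V$ remain finite.
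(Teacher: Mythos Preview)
There are two genuine gaps. First, your route for (1a) via $\mc{ED}_\mrm{fin}$ fails at the outset: the assertion that $\non^*(\mc{ED}_\mrm{fin},\infty)=\cov(\mc{M})$ is \emph{not} a ZFC theorem --- only $\cov(\mc{M})=\min\{\mf{d},\non^*(\mc{ED}_\mrm{fin},\infty)\}$ holds (as the paper itself recalls just before Proposition~\ref{edfin}), so in models with $\mf{d}<\non^*(\mc{ED}_\mrm{fin},\infty)$ your upper bound overshoots. Moreover, your claim that every $Y$ in a $\non^*(\mc{ED}_\mrm{fin},\infty)$-witness lies in $\mc{ED}_\mrm{fin}^+$ is unjustified: the definition of $\non^*(\cdot,\infty)$ only requires $\mc{Y}'\subseteq[\Delta]^\om$, and ``$|Y\cap Y|=\om$'' only shows $Y$ fails to be a.d.\ from \emph{itself}, not that $Y\in\mc{ED}_\mrm{fin}^+$. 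The paper instead proves $\non^*(\mc{ED},+)\le\cov(\mc{M})$ directly via the slalom characterisation $\cov(\mc{M})=\min\{|F|:\forall\,S\in\mc{C}\;\exists\,f\in F\;\forall^\infty n\;f(n)\notin S(n)\}$ with $\mc{C}=\{S:\sum|S(n)|/n^2<\infty\}$, associating to each $f\in F$ the fat set $X_f=\{(n,k):|k-f(n)|\le\lfloor\sqrt{n}\rfloor\}\in\mc{ED}^+$; the widths $\sim\sqrt{n}$ are exactly what lets an $\mc{ED}$-set $A$ (with columns of bounded size $m$) be encoded by a slalom $S_A\in\mc{C}$.

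Second, your argument for ``adds e.d.\ real $\Rightarrow$ $+$-destroys'' is not an argument: Corollary~\ref{mathias} tells you only that $\mbb{M}(\mc{ED}^*)$ $+$-destroys $\mc{ED}$, and there is no sense in which one can ``absorb $\mbb{M}(\mc{ED}^*)$ into $\PP$'' --- $\PP$ is an arbitrary forcing adding an e.d.\ real, and you must build an $\mc{ED}^+$ witness from that real alone. The paper's construction is the missing idea: fix in $V$ an interval partition $(P_n)$ with $|P_n|=n+1$ and enumerations $P_n\times\om=\{(a^n_i,b^n_i):i\in\om\}$, and set $X=\{(n,i):f(a^n_i)=b^n_i\}$; then $|(X)_n|=n+1$ forces $X\in\mc{ED}^+$, while any $g\in\,\!^\om\om\cap V$ gives rise to a ground-model $g'$ (with $g'\!\upharpoonright\! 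P_n\equiv b^n_{g(n)}$) against which $f$ is eventually different, yielding $|X\cap g|<\om$. (For ``destroys $\Rightarrow$ adds e.d.\ real'' the paper likewise avoids your filler issue entirely by passing to the space $\,\!^\om\mrm{FP}$ of finite-partial-function-valued sequences; your filler $f(n)=\min Y_{m(n)}$ may well work, but the ``careful column-by-column count'' you allude to is not carried out, and is not needed once one works in $\,\!^\om\mrm{FP}$.)
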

\begin{proof}
(1a): Let us recall the following characterisations of $\cov(\mc{M})$ (see \cite[Lem. 2.4.2, Thm. 2.4.5]{BaJu}):
Let $\mc{C}=\{S\in \,\!^\om ([\om]^{<\om}):\sum_{n\in\om}|S(n)|/n^2<\infty\}$ (for now let $1/0=1$). Then
\[
\cov(\mc{M}) =\min\big\{|F|:F\subseteq\,\!^\om\om\;\forall\;S\in\mc{C}\;\exists\;f\in F\;\forall^\infty\;n\;f(n)\notin S(n)\big\} \]
\[=\min\big\{|\mc{D}|:\forall\;D\in\mc{D}\;(D\subseteq\mbb{C}\;\text{is dense})\;\text{and}\;\nexists\;\mc{D}\text{-generic filter}\;G\subseteq\mbb{C}\big\}.\]
To show $\non^*(\mc{ED},+)\leq\cov(\mc{M})$, fix an $F\subseteq\,\!^\om\om$ witnessing the above characterisation. For every $f\in F$ define $X_f\in\mc{ED}^+$ as follows
\[X_f=\big\{(n,k)\in \om\times\om:f(n)-\lfloor\sqrt{n}\rfloor\leq k\leq f(n)+\lfloor\sqrt{n}\rfloor\big\}.\]
Let $A\in\mc{ED}$, we show that there is an $f\in F$ such that $|A\cap X_f|<\om$, and hence $\non^*(\mc{ED},+)\leq|F|=\cov(\mc{M})$. As columns have finite intersection with every $X_f$, we can assume that $A$ is of the form $\bigcup\{\{n\}\times F_n:n\in \om\}$ where $F_n\in [\om]^m$ for some fixed $m\in\om$. Let $S_A:\om\to [\om]^{<\om}$, \[ S_A(n)=\bigcup_{k\in F_n}\big[k-\lfloor\sqrt{n}\rfloor, k+\lfloor\sqrt{n}\rfloor\big]\cap\om.\]
Then $S_A\in\mc{C}$ and hence there is an $f\in F$ such that $f(n)\notin S_A(n)$ for almost all $n$, and so $(\{n\}\times F_n)\cap X_f=\0$ for almost all $n$, i.e. $|A\cap X_f|<\om$.

Conversely, we show that if a family $\{X_\al:\al<\ka\}$ witnesses $\non^*(\mc{ED},+)$ then there is a family $\mc{D}$ of dense subsets of $\mbb{C}$, $|\mc{D}|=\ka$ such that no filter on $\mbb{C}$ is $\mc{D}$-generic. We know that for every $\al$ there are infinitely $k$ such that $(X_\al)_k\ne\0$. Interpret $\mbb{C}$ now as $(\,\!^{<\om}\om,\supseteq)$ and for every $\al$ and $n$ let $D_{\al,n}=\{s\in \mbb{C}:\exists$ $k\geq n$ $s(k)\in (X_\al)_k\}$. Then $D_{\al,n}$ is dense in $\mbb{C}$, and if $G\subseteq \mbb{C}$ is a $\{D_{\al,n}:\al<\ka,n\in\om\}$-generic filter, then $g=\bigcup G:\om\to\om$ (in particular, $g\in \mc{ED}$) and $|g\cap X_\al|=\om$ for every $\al$, a contradiction.

(1b): We already know that $\cov^*(\mc{ED},+)\leq\cov^*(\mc{ED},\infty)=\non(\mc{M})$. To show the reverse inequality, we will need the following characterisation (see \cite[Lem. 2.4.8]{BaJu}):
\[ \non(\mc{M})=\min\big\{|\mc{S}|:\mc{S}\subseteq\mc{C}\;\text{and}\;\forall\;f\in\,\!^\om\om\;\exists\;S\in\mc{S}\;\exists^\infty\;n\;f(n)\in S(n)\big\}.\]
Notice that there is a family witnessing $\cov^*(\mc{ED},+)$ of the form  $\{\{n\}\times\om:n\in\om\}\cup\{f_\al:\al<\ka\}$ where $f_\al:\om\to\om$. For every $\al$ define $S_\al\in\mc{C}$, $S_\al(n)=(X_{f_\al})_n=\big[f_\al(n)-\lfloor\sqrt{n}\rfloor, f_\al(n)+\lfloor\sqrt{n}\rfloor\big]\cap\om$. Using the same argument we used in (1a), one can easily show that $\{S_\al:\al<\ka\}$ satisfies the conditions in the above characterisation of $\non(\mc{M})$, and hence $\non(\mc{M})\leq\ka$.

(2): The first ``left to right'' implication is trivial. The second one is basically \cite[Lem. 2.4.8, (2)$\to$(3)]{BaJu}. Assume that in an extension $W\supseteq V$ there is an $A\in [\om\times\om]^\om$ such that $|A\cap B|<\om$ for every $B\in\mc{ED}\cap V$. By shrinking $A$ can assume that $A=\{(n,k_n):n\in E\}$, $E\in [\om]^\om$ is an infinite partial function. Let $E=\{n_0<n_1<\dots\}$, $\mrm{FP}=\{$finite partial functions $\om\to\om\}$, and let $f\in\,\!^\om\mrm{FP}\cap W$, $f(m)=\{(n_i,k_{n_i}):i\leq m\}$. We show that $f$ is an eventually different real over $\,\!^\om\mrm{FP}\cap V$. Let $g\in\,\!^\om\mrm{FP}\cap V$ and assume on the contrary that $f(m)=g(m)$ for infinitely many $m$. We can assume that $|\dom(g(m))|=m+1$ for every $m$. Define the infinite partial function $g'\in V$ by recursion as follows: Let $\dom(g(0))=\{m_0\}$ and $g'(m_0)=g(0)(m_0)$. If we already have $m_0,m_1,\dots,m_{n-1}$ and $g'$ is defined on these entries, then pick an $m_n\in\dom(g(n))\setminus\{m_0,m_1,\dots,m_{n-1}\}$ and define $g'(m_n)=g(n)(m_n)$. It is trivial to show that $|A\cap g'|=\om$, a contradiction.

Finally we show that if $f\in\,\!^\om\om\cap W$ is an eventually different real over $V$ then $\mc{ED}\cap V$ is $+$-destroyed in $W$. Fix an interval partition $(P_n)_{n\in\om}$ in $V$ such that $|P_n|=n+1$, and fix enumerations $\{(a^n_i,b^n_i):i\in\om\}=P_n\times\om$. Define $X=\{(n,i):f(a^n_i)=b^n_i\}\in \mc{ED}^+\cap W$ (because $|(X)_n|=n+1$). We claim that $X$ $+$-destroys $\mc{ED}\cap V$. Let $g\in\,\!^\om\om\cap V$ and assume on the contrary that $|X\cap g|=\om$. Define $g'\in\,\!^\om\om\cap V$ as follows: If $g(n)=i$ then let $g' \clrest P_n\equiv b^n_i$. It follows that $f(a)=g'(a)$ for infinitely many $a$, a contradiction.
\end{proof}

Cardinal invariants of $\mc{ED}_\mrm{fin}$ are more intriguing (see \cite{weakq}): $\add^*(\mc{ED}_\mrm{fin},\infty)=\om$, $\cof^*(\mc{ED}_\mrm{fin},\infty)=\mf{c}$, $\mf{s}\leq\cov^*(\mc{ED}_\mrm{fin},\infty)$, $\non^*(\mc{ED}_\mrm{fin},\infty)\leq\mf{r}$ (where $\mf{s}$ and $\mf{r}$ are the {\em slitting} and {\em reaping} numbers), furthermore, $\cov(\mc{M})=\min\{\mf{d},\non^*(\mc{ED}_\mrm{fin},\infty)\}$, and $\non(\mc{M})=\max\{\cov^*(\mc{ED}_\mrm{fin},\infty),\mf{b}\}$.

\begin{prop}\label{edfin}$ $
\begin{itemize}
\item[(1)] $\non^*(\mc{ED}_\mrm{fin},+)=\non^*(\mc{ED}_\mrm{fin},\infty)$ and $\cov^*(\mc{ED}_\mrm{fin},+)=\cov^*(\mc{ED}_\mrm{fin},\infty)$;
\item[(2)] $\PP$ $+$-destroys $\mc{ED}_\mrm{fin}$ iff $\PP$ destroys $\mc{ED}_\mrm{fin}$ iff $\PP$ adds an eventually different infinite partial function $f\subseteq\Delta$ iff $\PP$ adds an eventually different infinite partial function bounded by a ground model real.
\end{itemize}
\end{prop}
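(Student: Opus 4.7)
The plan is to prove (2) via the cycle $(a)\Rightarrow(b)\Leftrightarrow(c)\Leftrightarrow(d)\Rightarrow(a)$ and then derive (1) from the same constructions combined with Observation \ref{cardobs}(3).

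For (2), the trivial implications are $(a)\Rightarrow(b)$ and $(c)\Rightarrow(d)$. For $(b)\Rightarrow(c)$: given $X\in [\Delta]^\om\cap V^\PP$ almost disjoint from every $A\in\mc{ED}_\mrm{fin}\cap V$, the finiteness of $\Delta$-columns forces $\pi_1(X)$ to be infinite, so $X$ contains an infinite partial function $f\subseteq\Delta$; for eventual differenceness against any $g\in\,\!^\om\om\cap V$, use the ground-model graph $g^\ast=\{(n,g(n)):g(n)\leq n\}\in\mc{ED}_\mrm{fin}\cap V$ and observe $\{n\in\dom(f):f(n)=g(n)\}$ injects into $X\cap g^\ast$. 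For $(c)\Rightarrow(b)$: such an $f$ itself lies in $[\Delta]^\om$ and is almost disjoint from $\mc{ED}_\mrm{fin}\cap V$ because each $A\in\mc{ED}_\mrm{fin}\cap V$ is, on a cofinite part, a union of $\leq M$ partial functions extendable to total $V$-functions against which $f$ is eventually different. For $(d)\Rightarrow(c)$: given $f:D\to\om$ bounded by a $V$-function $h$ which we may assume strictly increasing, $\tilde f:h[D]\to\om$ defined by $\tilde f(h(d))=f(d)$ lies in $\Delta$ and inherits eventual differenceness via the $V$-composition $g\mapsto g\circ h$.

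The core step is $(d)\Rightarrow(a)$, where one has to upgrade $f$ (obtained via $(c)$) to an $\mc{ED}_\mrm{fin}$-positive destroyer. Eventual differenceness applied to the $V$-constant functions $g\equiv c$ thins $D$ so that $f$ is injective and $f(n)\to\infty$; applied to $g(n)=n-c$ it yields $n-f(n)\to\infty$. For any $A\in\mc{ED}_\mrm{fin}\cap V$ with $|A_n|\leq M$ on $[N_0,\infty)$ and $V$-enumerators $\alpha_1,\dots,\alpha_M$ of $A_n$, eventual differenceness against each $V$-function $n\mapsto\alpha_i(n)+c$ gives, for every $V$-constant $K$ and each $i\leq M$, that $\{n\in D:|f(n)-\alpha_i(n)|\leq K\}$ is finite. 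The plan is then to build $X\subseteq\Delta$ whose column at $n\in D$ is a growing (extension-level) neighbourhood of $f(n)$ of width $r_n$, chosen so that $r_n\to\infty$ but $r_n$ stays below the $A$-specific collision thresholds simultaneously for every ground-model $A$: then $X\in\mc{ED}_\mrm{fin}^+$ and $|X\cap A|<\om$. The hardest point I expect here is the simultaneous domination: producing a single extension-level slowly-growing $(r_n)$ that lies below all of the countably many $A$-specific thresholds at once, which will be handled by a diagonal choice of $r_n$ from the family $\{A\in\mc{ED}_\mrm{fin}\cap V\}$ using that each threshold is itself cofinal.

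For (1), Observation \ref{cardobs}(3) gives $\non^*(\mc{ED}_\mrm{fin},\infty)\leq\non^*(\mc{ED}_\mrm{fin},+)$ and $\cov^*(\mc{ED}_\mrm{fin},+)\leq\cov^*(\mc{ED}_\mrm{fin},\infty)$. The reverse inequalities come from upgrading witness families: starting from a witness $\mc{Y}\subseteq[\Delta]^\om$ of $\non^*(\mc{ED}_\mrm{fin},\infty)$, each non-positive infinite $Y\in\mc{Y}$ contains an infinite partial function $\subseteq\Delta$ (by the $(b)\Rightarrow(c)$ reasoning applied internally), and is replaced by countably many $\mc{ED}_\mrm{fin}^+$ sets produced from that partial function by the construction of the previous paragraph, preserving the separation property on the nose; the cardinality is unchanged because $\non^*(\mc{ED}_\mrm{fin},\infty)\geq\cov(\mc{M})\geq\aleph_1$. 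The dual replacement yields $\cov^*(\mc{ED}_\mrm{fin},+)\geq\cov^*(\mc{ED}_\mrm{fin},\infty)$. The main obstacle, in both (1) and $(d)\Rightarrow(a)$, is the same simultaneous-domination bookkeeping isolated above.
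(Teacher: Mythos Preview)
Your cycle for (2) and the easy implications are fine, and you have correctly located the crux at $(d)\Rightarrow(a)$. The gap is exactly where you flag it, and your proposed fix does not close it. The family you need to diagonalize over is $\mc{ED}_\mrm{fin}\cap V$, which has size $\mf{c}^V$, not $\aleph_0$; your phrase ``countably many $A$-specific thresholds'' is simply false. Unwinding your construction, what you need is a sequence $r_n\to\infty$ in $V^\PP$ such that for \emph{every} $g\in\prod_n(n+1)\cap V$ one has $|f(n)-g(n)|>r_n$ eventually --- i.e.\ $f$ must be ``$r$-eventually different'' over $V$ for some unbounded $r$. There is no reason an arbitrary eventually different real should have this stronger property, and no diagonal argument over an uncountable family will manufacture such an $r$. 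The same defect propagates to your plan for (1), which relies on ``the construction of the previous paragraph''.

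The paper avoids this entirely by moving the blow-up into the \emph{ground model}. Fix in $V$ a partition of each column $(\Delta)_{(n+1)^2-1}$ into $n+1$ pieces $P^n_0,\dots,P^n_n$ of size $n+1$, and set $\al:[\Delta]^\om\to\mc{ED}_\mrm{fin}^+$, $\al(X)=\bigcup\{P^n_k:(n,k)\in X\}$, together with $\be:\mc{ED}_\mrm{fin}\to\mc{ED}_\mrm{fin}$, $\be(A)=\{(n,k):P^n_k\cap A\ne\0\}$. Disjointness of the pieces gives $|\be(A)_n|\leq|A_{(n+1)^2-1}|$, so $\be(A)\in\mc{ED}_\mrm{fin}$, and crucially $\be(A)\in V$ whenever $A\in V$. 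Now if $X\subseteq\Delta$ is your eventually different partial function, then $|X\cap\be(A)|<\om$ for every $A\in\mc{ED}_\mrm{fin}\cap V$ \emph{because $X$ already destroys $\mc{ED}_\mrm{fin}\cap V$}; hence $|\al(X)\cap A|<\om$, and $\al(X)\in\mc{ED}_\mrm{fin}^+$ is the positive witness. No simultaneous domination is needed: the pair $(\al,\be)$ is a Borel Tukey connection, and the same pair gives (1) in one line. The idea you are missing is that the neighbourhoods must be chosen \emph{disjointly and in $V$}, not centred on $f(n)$ with an extension-side radius; centring on $f$ forces the companion map $\be$ out of $V$ and destroys the argument.
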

\begin{proof}
(1): We know that $\non^*(\mc{ED}_\mrm{fin},+)\geq \non^*(\mc{ED}_\mrm{fin},\infty)$ and $\cov^*(\mc{ED}_\mrm{fin},+)\leq\cov^*(\mc{ED}_\mrm{fin},\infty)$.  For every $n\in\om$ fix a partition $(\Delta)_{(n+1)^2-1}=\{((n+1)^2-1,i):i< (n+1)^2\}=\bigcup_{k\leq n}P^n_k$ such that $|P^n_k|=n+1$ for every $k$, and define the following functions:
\begin{itemize}
\item[(i)] $f:\Delta\to [\Delta]^{<\om}$, $f(n,k)=P^n_k$;
\item[(ii)] $\al:[\Delta]^\om\to\mc{ED}_\mrm{fin}^+$, $\al(X)=\bigcup\{f(n,k):(n,k)\in X\}$;
\item[(iii)] $\be:\mc{ED}_\mrm{fin}\to\mc{ED}_\mrm{fin}$,  $\be(A)=\{(n,k):f(n,k)\cap A\ne\0\}$.
\end{itemize}
Now, if $\mc{X}\subseteq [\Delta]^\om$ witnesses $\non^*(\mc{ED}_\mrm{fin},\infty)$ then $\al[\mc{X}]=\{\al(X):X\in\mc{X}\}$ witnesses $\non^*(\mc{ED}_\mrm{fin},+)$: Otherwise, if $A\in\mc{ED}_\mrm{fin}$ and $|A\cap \al(X)|=\om$ for every $X\in\mc{X}$, then $|\be(A)\cap X|=\om$ for every $X\in\mc{X}$, a contradiction. Similarly, if $\mc{A}\subseteq\mc{ED}_\mrm{fin}$ witnesses $\cov^*(\mc{ED}_\mrm{fin},+)$ then $\be[\mc{A}]$ witnesses $\cov^*(\mc{ED}_\mrm{fin},\infty)$: Otherwise, if $Y\in [\Delta]^\om$ has finite intersection with all $\be(A)$, then $\al(Y)\in\mc{ED}_\mrm{fin}^+$ has finite intersection with all $Y\in\mc{A}$, a contradiction.

(2): All ``left to right'' implications are trivial. Assume now that $V\subseteq W$ is an extension and $g\in W$ is an eventually different infinite partial function over $V$, $g\leq h\in\om^\om\cap V$. We can assume that $h$ is strictly increasing. It is straightforward to show that $X=\{(h(n),g(n)):n\in\om\}\subseteq\Delta$ is also an eventually different infinite partial function over $V$, and hence $\al(X)$ $+$-destroys $\mc{ED}_\mrm{fin}\cap V$.
\end{proof}

\subsection*{Around $\mc{S}$}
We know that $\non^*(\mc{S},\infty)=\om$, $\cov^*(\mc{S},\infty)=\non(\mc{N})$, and $\cof^*(\mc{S},\infty)=\mf{c}$.
\begin{prop} {\em (basically \cite[Thm. 1.6.2]{meza})}
\begin{itemize}
\item[(1a)] $\non^*(\mc{S},+)=\om$;
\item[(1b)] $\cov^*(\mc{S},+)=\non(\mc{N})$;
\item[(2)] $\PP$ $+$-destroys $\mc{S}$ iff $\PP$ destroys $\mc{S}$ iff $\vd_\PP \,\!^\om 2\cap V\in\mc{N}$.
\end{itemize}
\end{prop}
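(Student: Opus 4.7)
The plan is to exploit the single fact that $\lam(C) = 1/2$ for every $C \in \Omega$, which implies that for any infinite $Y = \{C_n : n \in \om\} \subseteq \Omega$ the Borel set
\[ L_Y := \bigcup_k \bigcap_{n \geq k} ({^\om 2} \setminus C_n) \]
satisfies $\lam(L_Y) \leq 1/2$ (the $k$-th approximant sits inside ${^\om 2} \setminus C_k$). The key reformulation of everything in the statement is that $|Y \cap \mc{C}_x| < \om$ iff $x \in L_Y$.

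For (1a), I would take, for each finite antichain $S \subseteq \,\!^{<\om} 2$ with $\sigma_S := \sum_{s \in S} 2^{-|s|} < 1/2$, the family $\mc{Y}_S = \{C \in \Omega : C \cap \bigcup_{s \in S} [s] = \emptyset\}$, and claim that $\{\mc{Y}_S : S\}$ witnesses $\non^*(\mc{S},+)=\om$. Each $\mc{Y}_S$ is $\mc{S}$-positive by a direct measure argument: given any finite $F \subseteq {^\om 2}$, cover $F$ by a clopen of measure $< 1/2 - \sigma_S$ and pick any clopen of measure $1/2$ inside the (clopen of measure $> 1/2$) complement; such a clopen exists because the measures of clopen subsets of a clopen of measure $> 1/2$ include every dyadic rational below it. If $A \subseteq \bigcup_{i \leq n} \mc{C}_{x_i}$ is in $\mc{S}$, then for $s_i \subsetneq x_i$ with $|s_i|$ large enough that $\sum 2^{-|s_i|} < 1/2$, every $C \in \mc{Y}_S$ (where $S = \{s_i\}$) satisfies $x_i \notin C$, so $\mc{Y}_S \cap A = \emptyset$. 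The exact same recipe yields, for any null $X \subseteq {^\om 2}$, a family $\mc{Y} = \bigcup_k \{C \in \Omega : C \cap U_k = \emptyset\} \in \mc{S}^+$ disjoint from every $\mc{C}_x$ ($x \in X$), where $U_k$ is a decreasing sequence of clopens covering $X$ with $\lam(U_k) < 2^{-k}$. This produces $\cov^*(\mc{S},+) \geq \non(\mc{N})$, and the same construction performed in $V[G]$ when ${^\om 2} \cap V \in \mc{N}$ yields a set $+$-destroying $\mc{S}$; the remaining implication ``$+$-destroy $\Rightarrow$ destroy'' in (2) is trivial.

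What remains are $\cov^*(\mc{S},+) \leq \non(\mc{N})$ and ``destroy $\Rightarrow \vd_\PP {^\om 2} \cap V \in \mc{N}$''. Both hinge on the same zero-one principle: any $X \subseteq {^\om 2}$ that is invariant under a countable dense subgroup of $({^\om 2}, \triangle)$ has $\lam^*(X) \in \{0,1\}$. This is because the measurable hull $H$ of $X$ is translation-invariant modulo null under a dense set, and the map $v \mapsto \lam(H \triangle (H+v))$ is continuous; so $H$ is invariant modulo null under every $v \in {^\om 2}$, which forces $\lam(H) \in \{0,1\}$. For the upper bound in (1b), pick any non-null $X_0$ of size $\non(\mc{N})$ and set $X := \bigcup_{q \in D} (X_0 \triangle q)$ where $D \subseteq {^\om 2}$ is a countable dense subgroup; then $|X| = \non(\mc{N})$ and $\lam^*(X) = 1$ by the zero-one law, so $X \not\subseteq L_Y$ for every $Y = \{C_n\} \in \mc{S}^+$ (as $\lam(L_Y) \leq 1/2$), producing $x \in X$ with $|Y \cap \mc{C}_x| = \om$. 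For the forcing implication, if $Y$ destroys $\mc{S}$ in $V[G]$ then ${^\om 2} \cap V \subseteq L_Y$, a Borel set of measure $\leq 1/2$; since ${^\om 2} \cap V$ is dense in ${^\om 2}$ and closed under $\triangle$ (so is itself a dense subgroup) in $V[G]$, the zero-one law forces $\lam^*({^\om 2} \cap V) = 0$, i.e. ${^\om 2} \cap V \in \mc{N}$. The main subtlety is recognising that this zero-one law is the right bridge between the combinatorial bound $\lam^*({^\om 2} \cap V) \leq 1/2$ that one gets for free and the desired ${^\om 2} \cap V \in \mc{N}$.
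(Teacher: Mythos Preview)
Your overall strategy is correct and parallels the paper closely; in particular, your zero--one law for dense subgroups is a clean alternative to the paper's tail-set $0$--$1$ argument, and your self-contained proof of the upper bound $\cov^*(\mc{S},+)\leq\non(\mc{N})$ via $\lam(L_Y)\leq 1/2$ is nicer than the paper's appeal to the known value of $\cov^*(\mc{S},\infty)$.

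There is, however, a genuine gap in your construction for the lower bound in (1b), and it propagates to the implication ``$\,\!^\om 2\cap V\in\mc{N}\Rightarrow\PP$ $+$-destroys $\mc{S}$'' in (2). You claim that for an arbitrary null $X\subseteq\,\!^\om 2$ one can choose a \emph{decreasing sequence of clopen sets} $U_k$ with $X\subseteq U_k$ and $\lam(U_k)<2^{-k}$. But then $X\subseteq\bigcap_k U_k$, a closed null (hence nowhere dense) set, which is impossible whenever $X$ is dense---and both a countable dense subset of $\,\!^\om 2$ (relevant for (1b)) and $\,\!^\om 2\cap V$ (relevant for (2)) are dense. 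If you relax ``clopen'' to ``open'' the $U_k$'s exist, but now $\,\!^\om 2\setminus U_k$ may be nowhere dense and contain no nonempty clopen set, so $\{C\in\Omega:C\cap U_k=\0\}$ can be empty and your $\mc{Y}$ need not lie in $\mc{S}^+$.

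The paper's fix is to use an \emph{increasing} sequence of clopens $U_n$ with $X\subseteq\bigcup_n U_n$ and $\lam\big(\bigcup_n U_n\big)<1/2-\eps$ (always available: cover $X$ by an open set of small measure and take finite unions of basic clopens), then pick $C_n\in\Omega$ disjoint from $U_n\cup V_n$, where $(V_n)$ enumerates all clopens of measure $<\eps$, and set $\mc{D}=\{C_n:n\in\om\}$. One then gets only $|\mc{D}\cap\mc{C}_x|<\om$ (not $=0$) for $x\in X$, and positivity of $\mc{D}$ is witnessed by the $V_n$'s rather than the $U_n$'s. With this correction your argument goes through.
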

\begin{proof}
(1a): Let $\mc{F}=\{F\in [\,\!^{<\om}2]^{<\om}\setminus\{\0\}:\sum_{t\in F} 2^{-|t|}\leq 1/4\}$ and for every $F\in\mc{F}$ define the clopen set $U_F=\bigcup_{t\in F}\{x\in\,\!^\om 2:t\subseteq x\}$ and the family $\mc{U}_F=\{C\in\Omega:C\cap U_F=\0\}$. Notice that $\mc{U}_F\in\mc{S}^+$ because if $X\subseteq \,\!^\om 2$ is finite then $U_F\cup X$ is a closed set of measure $\leq 1/4<1/2$ and hence there is a clopen set $C$ of measure $1/2$ inside its complement, therefore $\mc{U}_F\nsubseteq \bigcup_{x\in X}\mc{C}_x$. We show that for every $\mc{A}\in\mc{S}$ there is an $F\in\mc{F}$ such that $\mc{A}\cap\mc{U}_F=\0$, i.e. that $\{\mc{U}_F:F\in\mc{F}\}$ witnesses $\non^*(\mc{S},+)=\om$. Let $\{x_i:i<k\}\subseteq \,\!^\om 2$ be finite. Pick finite initials $t_i\subseteq x_i$ such that $\sum_{i<k}2^{-|t_i|}\leq 1/4$ and let $F=\{t_i:i<k\}\in\mc{F}$, then $\mc{U}_F\cap \bigcup_{i<k}\mc{C}_{x_i}=\0$ (because $x_i\in U_F$ for every $i$, and $C\cap U_F=\0$ for every $C\in\mc{U}_F$).

(1b): Let $\lam^*$ be the Lebesgue outer measure on $\,\!^\om 2$. We show that if $\lam^*(Y)<1/2$ then there is an $\mc{S}$-positive $\mc{D}\subseteq\Omega$ such that $|\mc{C}_y\cap \mc{D}|<\om$ for every $y\in Y$. This implies that $\non(\mc{N})\leq\cov^*(\mc{S},+)$, and the reverse inequality follows from $\cov^*(\mc{S},+)\leq\cov^*(\mc{S},\infty)=\non(\mc{N})$.

Fix an increasing sequence of clopen sets $U_n$ such that $Y\subseteq \bigcup_{n\in\om}U_n$ and the measure of this union is less then $1/2-\eps$ for some $\eps>0$. Enumerate $\{V_n:n\in\om\}$ all clopen sets of measure $<\eps$ and for each $n$ pick a $C_n\in\Omega$ such that $C_n\cap (U_n\cup V_n)=\0$ (this is possible because $U_n\cup V_n$ is a closed set of measure $<1/2$). The set $\mc{D}=\{C_n:n\in\om\}$ is $\mc{S}$-positive because if $X\subseteq \,\!^\om 2$ is finite, then $X\subseteq V_n$ for some $n$, hence $C_n\notin \bigcup_{x\in X} \mc{C}_x$ (and so $\mc{D}\nsubseteq \bigcup_{x\in X} \mc{C}_x$). Also, if $y\in Y$, then $y\in U_n$ in particular $y\notin C_n$ for every large enough $n$, and hence $|\mc{D}\cap \mc{C}_y|<\om$.

(2): The first ``only if'' implication is trivial.

Now assume that $\PP$ destroys $\mc{S}$. We will need the following result (see \cite[Lem. 1.6.3 (b)]{meza}): If $\lam^*(Y)>1/2$ then for every infinite $\mc{D}\subseteq\Omega$ there is a $y\in Y$ such that $|\mc{D}\cap\mc{C}_y|=\om$. This implies that $\vd_\PP\lam^*(\,\!^\om 2\cap V)\leq 1/2$. Notice that $\vd_\PP$``$\lam^*(\,\!^\om 2\cap V)=0$ or $1$'' holds for every $\PP$: If $V[G]\models \lam^*(\,\!^\om 2\cap V)<1$ then there is a compact set $C\in V[G]$ of positive measure which is disjoint from $V$, and hence, applying the $0$-$1$ law, the $F_\sigma$ tail-set $\{x\in \,\!^\om 2:\exists$ $y\in C$ $|x\vartriangle y|<\om\}$ generated by $C$ is of measure $1$, and of course, this set is also disjoint from $V$. We conclude that $\vd_\PP \,\!^\om 2\cap V\in\mc{N}$.

Finally, the last implication follows from the result we used in (1b).
\end{proof}

\subsection*{Around $\mrm{Nwd}$}

We know (see \cite{nwd}) that $\non^*(\mrm{Nwd},\infty)=\om$, $\cov^*(\mrm{Nwd},\infty)=\cov(\mc{M})$, and $\cof^*(\mrm{Nwd},\infty)=\cof(\mc{M})$.

\begin{prop}$ $
\begin{itemize}
\item[(1)] {\em (see \cite{kerem} and \cite{nwd})} $\non^*(\mrm{Nwd},+)=\om$ and $\cov^*(\mrm{Nwd},+)=\add(\mc{M})$.
\item[(2)] If $\PP$ adds Cohen reals then it destroys $\mrm{Nwd}$. If $\PP$ $+$-destroys $\mrm{Nwd}$ then it adds both dominating and Cohen reals.
\item[(2-cd)] {\em (see \cite{nwd})} If $\PP$ adds a Cohen real and $\vd_\PP$``$\QQ$ adds a dominating real'', then $\PP\ast\QQ$ $+$-destroys $\mrm{Nwd}$.
\item[(2-dc)] Adding first a dominating then a Cohen real does not necessarily $+$-destroy $\mrm{Nwd}$: If $\PP$ has the Laver property then $\PP$ cannot destroy $\mrm{Nwd}$ and $\PP\ast \mbb{C}$ cannot $+$-destroy $\mrm{Nwd}$.
\end{itemize}
\end{prop}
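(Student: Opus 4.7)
For part (1), the equality $\non^*(\mrm{Nwd},+)=\om$ is witnessed by the countable family $\{\mbb{Q}\cap I : I \text{ a basic open rational interval}\}$: each $\mbb{Q}\cap I$ is somewhere dense in $\mbb{Q}$ and therefore $\mrm{Nwd}$-positive, while every $A\in\mrm{Nwd}$ has its $\mbb{Q}$-closure missing some such $I$. The equality $\cov^*(\mrm{Nwd},+)=\add(\mc{M})$ is the harder half, and I would rely on \cite{kerem,nwd}: the bounds $\cov^*(\mrm{Nwd},+)\leq\cov(\mc{M})$ and $\leq\mf{b}$ follow by adapting to families of the appropriate cardinalities the Cohen-style and dominating-style constructions I will describe in part (2), and the matching lower bound uses Bartoszy\'nski's characterisation $\add(\mc{M})=\min\{\cov(\mc{M}),\mf{b}\}$.

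For part (2), the Cohen-to-destroy direction is immediate: given a Cohen real $c\in\mbb{R}$ over $V$, any sequence $Y=\{q_n:n\in\om\}\subseteq\mbb{Q}$ converging to $c$ satisfies $|Y\cap A|<\om$ for each $A\in\mrm{Nwd}\cap V$, because $c\notin\overline{A}^{\mbb{R}}\in V$ yields a neighbourhood of $c$ disjoint from $A$ catching cofinitely many $q_n$. For the converse, suppose $Y\in\mrm{Nwd}^+\cap V^\PP$ is a $+$-destroyer. Via the isomorphism $\mrm{Nwd}\simeq\mrm{tr}(\mrm{NWD})$ recalled in the introduction, $Y$ corresponds to $\wh{Y}\subseteq\,\!^{<\om}2$ with $[\wh{Y}]_\delta\in\mrm{NWD}^+$ (in particular non-empty) and $|\wh{Y}\cap A|<\om$ for every $A\in\mrm{tr}(\mrm{NWD})\cap V$. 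To extract a Cohen real, pick any $x\in[\wh{Y}]_\delta$; for any dense-open $D\subseteq\,\!^{<\om}2$ coded in $V$, the complement $\,\!^{<\om}2\setminus D$ has $G_\delta$-closure $\,\!^\om 2\setminus[D]$ which is nowhere dense, so $\,\!^{<\om}2\setminus D\in\mrm{tr}(\mrm{NWD})\cap V$ and hence $\wh{Y}\subseteq^* D$. Since infinitely many initial segments of $x$ lie in $\wh{Y}$, almost all of them lie in $D$; thus $x\in[D]$, and $x$ is Cohen over $V$. To extract a dominating real, WLOG $Y\subseteq\mbb{Q}\cap(0,1)$ is dense in $(0,1)$ (choose a $V$-rational interval in which $Y$ is dense and rescale by a $V$-rational affine map). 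Fix $V$-enumerations $\pi_n:\mbb{Q}\cap J_n\to\om$ of the intervals $J_n=(1/(n+2),1/(n+1))$ and define $f(n)=\min\pi_n[Y\cap J_n]\in V^\PP$. Given $g\in\om^\om\cap V$, the set $A_g=\bigcup_n\pi_n^{-1}[\{0,\ldots,g(n)\}]$ is nowhere dense in $\mbb{Q}$ (its $\mbb{R}$-closure accumulates only at $0$ and the rationals $1/(n+1)$), so $|Y\cap A_g|<\om$ forces $f(n)>g(n)$ cofinitely, i.e.\ $f$ dominates $g$.

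For (2-cd) I plan to follow the construction of \cite{nwd}: starting from a Cohen real $c$ over $V$ and a dominating real $d$ over $V[c]$, one inductively builds a dense $Y\subseteq\mbb{Q}\cap(0,1)$ where, at stage $n$, the Cohen real $c$ locates rationals in the $n$-th $V$-basic open interval avoiding the closed-nowhere-dense hulls of the first $n$ enumerated members of $\mrm{Nwd}\cap V$, while $d$ is used to diagonalise cofinally, guaranteeing $|Y\cap A|<\om$ for \emph{every} $A\in\mrm{Nwd}\cap V$ and ensuring density in each $V$-basic open interval.

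For (2-dc), the first half follows from the equivalence ``$\PP$ destroys $\mrm{Nwd}$ iff $\PP$ adds a Cohen real'' (cited to \cite{nwd}) combined with the classical fact that the Laver property excludes Cohen reals. The second half is the main obstacle of the whole proposition: even though $\PP*\mbb{C}$ adds a Cohen real over $V$ and may add a dominating real over $V$ (for instance when $\PP$ is the Laver forcing), it still fails to $+$-destroy $\mrm{Nwd}$. My plan is to factor through $V^\PP$ by analysing a $\mbb{C}$-name $\dot Y^*\in V^\PP$ for a hypothetical $+$-destroyer $Y$, and then to invoke the Laver property of $V^\PP/V$ to capture the combinatorial data defining $\dot Y^*$ by a $V$-slalom; this slalom should yield a $V$-nowhere-dense set $A$ such that some Cohen condition forces $|Y\cap A|=\om$, contradicting the $+$-destruction. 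The delicate step is managing the interaction between the Laver-extracted slalom, produced over the intermediate model $V^\PP$, and the Cohen-generic of the second-step forcing: since Cohen itself destroys $\mrm{Nwd}$, one must use the Laver property precisely to block $Y$ from being simultaneously dense in an interval and almost disjoint from every $V$-nowhere-dense set. The detailed argument is in \cite{nwd}.
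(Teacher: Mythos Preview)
Your handling of (1) and (2) is essentially correct; your extraction of a dominating real in (2) is in fact cleaner than the paper's, which works in $\,\!^{<\om}2$ and defines $f(n)=\max\{\min\{|t|:s\subseteq t\in X\}:s\in\,\!^n2\}$ via an antichain argument, whereas your partition of $(0,1)$ into the intervals $J_n$ with finite selectors $A_g$ is more transparent.

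There is, however, a genuine gap in your treatment of (2-dc). For the first half you write that ``$\PP$ destroys $\mrm{Nwd}$ iff $\PP$ adds a Cohen real'' and deduce the claim from the classical fact that the Laver property excludes Cohen reals. But this equivalence is \emph{not} established: part (2) only shows that \emph{$+$-destroying} $\mrm{Nwd}$ forces a Cohen real, and the paper explicitly leaves ``destroying $\mrm{Nwd}$ $\Rightarrow$ adding Cohen reals'' as an open problem (see the Remark and Problem immediately following this proposition). So your argument for ``Laver property $\Rightarrow$ cannot destroy $\mrm{Nwd}$'' collapses. The paper instead argues directly: given a name $\mr{X}$ for an infinite subset of $\,\!^{<\om}2$, one reduces to the two cases where $\mr{X}$ is an infinite chain (then the associated branch cannot be Cohen by the Laver property, hence lies in a ground-model nowhere dense tree) or $\mr{X}$ is a converging antichain. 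In the antichain case one uses the Laver property not to block Cohen reals but to trap the sequence $(\mr{E}_m)$ of pairs $\{\mr{y}\clrest m,\mr{s}_k\clrest m\}$ inside ground-model sets $F'_m\in[\,\!^m2]^{\leq 2m+2}$, from which a ground-model nowhere dense $A\supseteq \mr{X}$ is read off. This antichain case is exactly the ``second case is unclear'' obstruction to the equivalence you invoked, and it is precisely here that the Laver property does real work beyond merely excluding Cohen reals.

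Your plan for the second half of (2-dc) is in the right spirit but too vague to stand alone. The paper's key step is not to slalom-capture the $\mbb{C}$-name directly, but first to replace the $\mbb{C}$-name $\mr{X}$ by a countable family $\{Y_n\}$ of dense subsets of $\,\!^{<\om}2$ in $V^\PP$ (via $Y_n=\{s:\exists\,q'\leq q_n\;q'\vd s\in\mr{X}\}$) with the property that any $A\in\mrm{Nwd}$ meeting every $Y_n$ infinitely forces $|A\cap\mr{X}|=\om$; one then builds in $V^\PP$ a single converging antichain $Z$ meeting all $Y_n$ infinitely, and applies the antichain case of the first half to cover $Z$ by some $A\in\mrm{Nwd}\cap V$. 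Finally, your sketch of (2-cd) speaks of ``the first $n$ enumerated members of $\mrm{Nwd}\cap V$'', which is not meaningful since $\mrm{Nwd}\cap V$ is uncountable; the actual construction takes shifted Cohen reals $c_n=t_n^\frown(c(k):k\geq|t_n|)$ indexed by an enumeration of $\,\!^{<\om}2$ and uses the dominating real to truncate each branch $\{c_n\clrest m:m\in\om\}$ above the level where it has left a given $A\in\mrm{Nwd}\cap V$.
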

\begin{proof}
(1): A countable base of the topology witnesses $\non^*(\mrm{Nwd},+)=\om$, and reformulating a result from \cite{kerem} (see also in \cite{nwd}) shows that $\cov^*(\mrm{Nwd},+)=\add(\mc{M})$.

(2): We already know that $\mbb{C}=\PP_\mc{M}$ destroys $\mrm{tr}(\mc{M})\simeq\mrm{Nwd}$, and hence adding a Cohen real destroys $\mrm{Nwd}$.

First we show that if $\PP$ $+$-destroys $\mrm{Nwd}$, then $\PP$ adds a dominating real. For now let $\mrm{Nwd}=\{A\subseteq\,\!^{<\om} 2:\forall$ $s$ $\exists$ $t$ $(s\subseteq t$ and $A\cap t^\uparrow=\0)\}$, and let $\mr{X}$ be a $\PP$-name such that $\vd_\PP\mr{X}\in\mrm{Nwd}^+$ and $\vd_\PP|\mr{X}\cap A|<\om$ for every $A\in\mrm{Nwd}\cap V$. We can assume that $\mr{X}$ is dense in $\,\!^{<\om} 2$, that is, $\vd_\PP\forall$ $s$ $\exists$ $t$ $s\subseteq t\in \mr{X}$ (because there is a $\PP$-name $\mr{t}$ for a node in $\,\!^{<\om} 2$ such that $\vd_\PP$``$\mr{X}$ is dense in $\mr{t}^\uparrow$'' and the proof below can be easily modified to $\mr{t}^\uparrow\simeq\,\!^{<\om} 2$). Let $\mr{f}$ be a $\PP$-name for an element of $\,\!^\om\om$ such that
\[ \vd_\PP\mr{f}(n)=\max\big\{\min\big\{|t|:s\subseteq t\in \mr{X}\big\}:s\in\,\!^n 2\big\}\;\;\text{for every}\;n.\]
We claim that $\mr{f}$ is dominating over $\,\!^\om\om\cap V$: Let $g\in\,\!^\om\om\cap V$ be strictly increasing and satisfying $g(0)>1$. Fix an infinite maximal antichain $\{a_n:n\in\om\}\subseteq\,\!^{<\om} 2$ such that $|a_n|=g(n)$, and let $A=\,\!^{<\om} 2\setminus\bigcup\{a_n^\uparrow:n\in\om\}\in\mrm{Nwd}$. It is easy to see that $|A\cap\,\!^n 2|\geq 2^{n-1}$ for every $n$. We know that in the extension $A\cap\mr{X}\subseteq \,\!^{\leq N} 2$ for an $N\in\om$. Now if $n>N$ then we can pick a point $s\in A\cap \,\!^n 2$ such that $s\nsubseteq a_k$ for $k<n$. As there can be no $a_k$ below $s$, $K=\min\{k:s\subseteq a_k\}<\om$, and $|a_K|>K\geq n$. In particular, $s^\uparrow\cap \,\!^{<|a_K|} 2=\{t:s\subseteq t$ and $|t|<|a_K|\}\subseteq A$, and hence $\mr{f}(n)\geq\min\{|t|:s\subseteq t\in \mr{X}\}\geq |a_K|=g(K)\geq g(n)$.

Now we show that if $\PP$ $+$-destroys $\mrm{Nwd}$, then it adds Cohen reals. Let $\mr{X}$ be as above, then in the extension $[\mr{X}]_\delta\ne\0$. We show that every element $y$ of this set is Cohen over $V$: If $C\subseteq \,\!^\om 2$ is a closed and nowhere dense set coded in $V$, then there is a tree $T\subseteq \,\!^{<\om}2$, $T\in\mrm{Nwd}$ such that $C=[T]_\delta=[T]:=\{x\in\,\!^\om 2:\forall$ $n$ $x\clrest n\in T\}$, in particular $|\mr{X}\cap T|<\om$ and hence $y\notin [T]$.

(2-cd): This is basically \cite[Thm. 1.4 (ii)]{nwd}. Let $G$ be $(V,\PP)$-generic, $c\in \,\!^\om 2\cap V[G]$ be Cohen over $V$, $H$ be $(V[G],\QQ[G])$-generic, and $d\in\,\!^\om\om\cap V[G,H]$ be dominating over $V[G]$. Enumerate $\,\!^{<\om} 2=\{t_n:n\in\om\}$ in $V$ and for every $n$ define $c_n\in \,\!^\om 2\cap V[G]$ as $c_n=t_n^\frown (c(k):k\geq |t_n|)$, then $c_n$ is also Cohen over $V$, and let $X=\{c_n\clrest m:m\geq d(n)\}\in V[G,H]$. Notice that $X$ is dense in $\,\!^{<\om} 2$. Now if $A\in\mrm{Nwd}\cap V$ then $|A\cap \{c_n\clrest m:m\in\om\}|<\om$, in particular
\[ f_A(n)=\min\big\{k:\forall\;m\geq k\;c_n\clrest m\notin A\big\}\]
is well defined, and $f_A\in \,\!^\om\om\cap V[G]$. We know that $f_A(n)\leq d(n)$ for every $n\geq N_A$ for some $N_A\in \om$, and hence $X\cap A\subseteq \{c_n\clrest m:n<N_A,m<f_A(n)\}$.

(2-dc): If $\mr{X}$ is a $\PP$-name, $p\in\PP$, and $p\vd\mr{X}\in [\,\!^{<\om} 2]^\om$, then we can assume that $\mr{X}$ is either (i) an infinite chain, that is,  $p\vd$``$\mr{X}=\{\mr{s}_0\subsetneq \mr{s}_1\subsetneq\dots\}$ and $\mr{s}_k\subseteq\mr{x}\in\,\!^\om 2$ for every $k$'', or  (ii) a converging antichain, that is, there is a $\PP$-name $\mr{y}$ such that $p\vd$``$\mr{y}\in \,\!^\om 2$ and $\forall$ $n$ $\forall^\infty$ $s\in\mr{X}$ $\mr{y}\clrest n\subseteq s$''.

In the first case, as $\PP$ satisfies the Laver-property, $\mr{x}$ cannot be a Cohen real over $V$, and hence a $q\leq p$ forces that $\mr{x}\in C$ for some nowhere dense closed set $C=[T]\in V$, $T\in \mrm{Nwd}\cap V$, and so $q\vd |\mr{X}\cap T|=\om$.

In the second case, we can shrink $\mr{X}$ and assume that it has an enumeration $\mr{X}=\{\mr{s}_k:k\in\om\}$ and there is a sequence $(\mr{n}_k)_{k\in\om}$ of $\PP$-names for an increasing sequence in $\om$ such that $p$ forces the following:
\[\tag{$\sharp$} \mr{n}_0=0,\;\mr{y}\clrest \mr{n}_k\subsetneq \mr{s}_k,\;\mr{y}\clrest (\mr{n}_k+1)\nsubseteq \mr{s}_k,\;\text{and}\;|\mr{s}_k|<\mr{n}_{k+1}\;\text{for every $k$}.\]
Now define the $\PP$-names $\mr{E}_m$ as follows: $p$ forces that if $m\in [\mr{n}_k,\mr{n}_{k+1})$ and $m>|\mr{s}_k|$ then $\mr{E}_m=\{\mr{y}\clrest m\}$, and if $m\in [\mr{n}_k,\mr{n}_{k+1})$ and $m\leq |\mr{s}_k|$ then $\mr{E}_m=\{\mr{y}\clrest m,\mr{s}_k\clrest m\}$. Now $\mr{E}_m\subseteq \,\!^m 2$ is of size $\leq 2$, hence applying the Laver property, there are a $q\leq p$ and a sequence $F_m\subseteq [\,\!^m 2]^{\leq 2}$ in $V$ such that $|F_m|= m+1$ and $q\vd \mr{E}_m\in F_m$ for every $m$, in particular, if $F'_m=\bigcup F_m\in [\,\!^m 2]^{\leq 2m+2}$  then $q\vd \mr{E}_m\subseteq F'_m$ for every $m$. Let $A=\{t\in \,\!^{<\om} 2:\forall$ $m\leq |t|$ $t\clrest m\in F'_m\}$. Then $A\in\mrm{Nwd}$ because for every $t\in \,\!^{<\om} 2$ there is a $t'\supseteq t$ such that $t'\notin F'_{|t'|}$ and hence no extension of $t'$ belongs to $A$, and of course $q\vd \mr{X}\subseteq A$.

We show that $\PP\ast\mbb{C}$ cannot $+$-destroy $\mrm{Nwd}$. First notice that if $\mr{X}$ is a $\mbb{C}$-name for a dense subset of $\,\!^{<\om} 2$, then there is a countable family $\{Y_n:n\in\om\}$ of dense subsets of $\,\!^{<\om}2$ such that if an $A\in\mrm{Nwd}$ has infinite intersection with all $Y_n$ (there is always such an $A$ because each $Y_n$ is dense) then $\vd_\mbb{C}|A\cap\mr{X}|=\om$. Why? Enumerate $\mbb{C}=\{q_n:n\in\om\}$ and define $Y_n=\{s\in \,\!^{<\om}2:\exists$ $q'\leq q_n$ $q'\vd s\in\mr{X}\}$. It is easy to see that this family satisfies our requirements. Now if $\mr{X}$ is a $\PP\ast\mbb{C}$-name and $(p,q)\vd\mr{X}\in\mrm{Nwd}^+$ then we can assume that $(p,q)$ forces that $\mr{X}$ is dense (because a condition below $(p,q)$ decides where $\mr{X}$ is dense and we can work inside that cone in $\,\!^{<\om}2$). Therefore there are $\PP$-names $\mr{Y}_n$ for dense subsets of $\,\!^{<\om}2$ such that $p$ forces the following: ``If $A\in\mrm{Nwd}$ and $|A\cap \mr{Y}_n|=\om$ for every $n$, then $q\vd_\mbb{C}|A\cap\mr{X}|=\om$''. Working in $V^\PP$, it is trivial to construct an antichain $\mr{Z}\subseteq \,\!^{<\om}2$ satisfying $(\sharp)$ which has infinite intersection with all $\mr{Y}_n$, and hence there is an $A\in\mrm{Nwd}\cap V$ covering $\mr{Z}$. It follows that $(p,q)\vd |A\cap \mr{X}|=\om$.
\end{proof}

\begin{rem}
Notice that the proof of part (2) of the last Proposition ``almost'' shows that destroying $\mrm{Nwd}$ requires Cohen reals: Assume that there is an $X\in [\,\!^{<\om} 2]^\om\cap V^\PP$ such that $|X\cap A|<\om$ for every $A\in\mrm{Nwd}\cap V$. Then either $[X]_\delta\ne\0$, i.e. $X$ contains an infinite chain $Y$ defining a real $y=\bigcup Y\in\,\!^\om 2$ or $X$ contains an infinite ``convergent'' antichain $Z$ defining $z\in\,\!^\om 2$ as the unique real such that $\forall$ $n$ $\forall^\infty$ $t\in Z$ $z\clrest n\subseteq t$. In the first case we can use the same argument as above but the second case in unclear.
\end{rem}

\begin{prob}
Does there exist a forcing notion $\PP$ which destroys $\mrm{Nwd}$ but does not add Cohen reals? (This problem might be quite difficult because we know that $\cov^*(\mrm{Nwd},\infty)=\cov(\mc{M})$ and hence iterated destruction of $\mrm{Nwd}$ implies adding Cohen reals. In other words, this problem resembles to the well-known ``half-a-Cohen-real'' problem, see \cite{zapletalhalf}.)
\end{prob}

\begin{prob}
Is there any reasonable characterisation of those tall Borel ideals $\mc{I}$ such that destruction of $\mc{I}$ implies $+$-destruction of it? (We will show later that there are $F_\sigma$ counterexamples too, e.g. $\mc{I}_{1/n}$.)
\end{prob}

We will discuss analytic P-ideals later.

\section{The $\mbb{M}(\mc{I}^*)$- and $\mbb{L}(\mc{I}^*)$-generic reals}\label{secmatlav}

In this section, applying Laflamme's filter games and his characterisations of the existence of winning strategies in these games, we will characterise when the generic reals added by the Mathias-Prikry forcing $\mbb{M}(\mc{I}^*)$ and the Laver-Prikry forcing $\mbb{L}(\mc{I}^*)$ (see below) $+$-destroy $\mc{I}$.

Fix an ideal $\mc{I}$ on $\om$. Then we can talk about infinite games of the following form (see \cite{Lafl1} and \cite{Lafl2}) $G(\mc{X},Y,\mc{O})$ where $\mc{X}=\mc{I}^*$ or $\mc{I}^+$, $Y=\om$ or $[\om]^{<\om}$, and $\mc{O}=\mc{I}^*$, $\mc{I}^+$, or $\mc{P}(\om)\setminus\mc{I}^*$. In the $n$th round Player \textbf{I} chooses an $X_n\in\mc{X}$ and Player \textbf{II} responds with a $k_n\in X_n$ (if $Y=\om$) or with an $F_n\in [X_n]^{<\om}$ (if $Y=[\om]^{<\om}$, respectively). Player \textbf{II} wins if $\{k_n:n\in\om\}\in\mc{O}$ (if $Y=\om$) or $\bigcup\{F_n:n\in\om\}\in\mc{O}$ (if $Y=[\om]^{<\om}$).

Let us show that Borel Determinacy (see \cite{martin}) implies that all these games are determined if $\mc{I}$ is Borel. First of all, we recall the setting of Borel Determinacy. Fix an infinite set $\Gamma$, a nonempty tree $T\subseteq\,\!^{<\om} \Gamma$ without terminal nodes (the set of all possible outcomes of the game), and a set $A\subseteq [T]=\{g\in\,\!^\om \Gamma:\forall$ $n$ $g\clrest n\in T\}$. The game $G(A,T)$ is played by players \textbf{I} and \textbf{II}, in the $n$th round \textbf{I} chooses an $x_n\in\Gamma$ and \textbf{II} responds with $y_n\in\Gamma$ such that $(x_0,y_0,x_1,\dots,x_n,y_n)\in T$. Player \textbf{I} wins if $(x_0,y_0,\dots,x_n,y_n,\dots)\in A$. Consider $\,\!^\om \Gamma$ as the power of a discrete space (in particular $[T]$ is a closed set). We know that if $A\subseteq [T]$ is Borel, then the game is determined, i.e. one of the players has a winning strategy.

Now assume that $\mc{I}$ is Borel and fix $\mc{X},Y$ and $\mc{O}$ as above. Let $\Gamma=\mc{P}(\om)$ and define $T$ accordingly, that is, $(X_0,F_0,X_1,F_1,\dots,X_{n-1},F_{n-1})\in T$ iff $X_k\in\mc{X}$ and $F_k\in [X_k]^1$ (if $Y=\om$) or $F_k\in [X_k]^{<\om}$ (if $Y=[\om]^{<\om}$) for every $k<n$, and let $A=\{g\in [T]:\bigcup\{g(2n+1):n\in\om\}\in\mc{P}(\om)\setminus \mc{O}\}$. Then a player has winning strategy in $G(A,T)$ iff he has winning strategy in $G(\mc{X},Y,\mc{O})$ (in other words, the two games are equivalent). To see that $A$ is Borel, notice that the map $\al:\,\!^\om\mc{P}(\om)\to\mc{P}(\om)$, $g\mapsto \bigcup\{g(2n+1):n\in\om\}$ is Borel because the preimage of $\{S\subseteq\om:m\in S\}$ is the open set $\{g\in\,\!^\om\mc{P}(\om):\exists$ $n$ $m\in g(2n+1)\}$. In particular, $A=\al^{-1}[\mc{P}(\om)\setminus \mc{O}]\cap [T]$ is Borel.

\smallskip
We say that a tall ideal $\mc{I}$ is a {\em weak P-ideal}, if every sequence $X_n\in\mc{I}^*$ ($n\in\om$) has an $\mc{I}$-positive pseudointersection, i.e. $\cov^*(\mc{I},+)>\om$. To define the next property, we need the following construction: For a fixed tall $\mc{I}$ on $\om$, we define $\mc{I}^{<\om}$ on $[\om]^{<\om}\setminus\{\0\}$ (see \cite{HrMi}) as the ideal generated by all sets of the form $A^{<\om}=\{x\in[\om]^{<\om}:A\cap x\ne\0\}$ for $A\in\mc{I}$ (notice that this family is closed for taking finite unions). We say that $\mc{I}$ is {\em $\om$-diagonalisable by $\mc{I}$-universal sets} if there is a countable family $\{X_n:n\in\om\}\subseteq(\mc{I}^{<\om})^+$ (i.e. $\forall$ $n$ $\forall$ $A\in\mc{I}$ $\exists$ $x\in X_n$ $A\cap x=\0$) such that \[\tag{$\ast$} \forall\;A\in\mc{I}\;\exists\;n\;\forall^\infty\;x\in X_n\;x\nsubseteq A.\]

\begin{thm} {\em (see \cite{Lafl1})}
In $G(\mc{I}^*,[\om]^{<\om},\mc{I}^+)$, \textbf{I} has a winning strategy iff $\mc{I}$ is not a weak P-ideal, and \textbf{II} has a  winning strategy iff $\mc{I}$ is $\om$-diagonalisable by $\mc{I}$-universal sets.
\end{thm}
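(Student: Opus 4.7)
The plan is to prove each of the two characterisations by its two directions separately. The ``easy'' directions ($\Leftarrow$) construct an explicit winning strategy from the given combinatorial hypothesis; the ``hard'' directions ($\Rightarrow$) extract the combinatorial object from a winning strategy of the opposing player.

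\emph{Player I.} ($\Leftarrow$) Given $(A_n)_{n\in\om}\subseteq\mc I^*$ whose every pseudointersection lies in $\mc I$, let \textbf{I} play $\sigma_\mathbf{I}(F_0,\dots,F_{n-1}):=\bigcap_{k\leq n}A_k\in\mc I^*$. Any response $F_n$ is contained in every $A_k$ for $k\leq n$, so $\bigcup_m F_m$ is a pseudointersection of $(A_n)$ and hence lies in $\mc I$. ($\Rightarrow$) Given any \textbf{I}-strategy $\sigma$, the tree $T_\sigma\subseteq([\om]^{<\om})^{<\om}$ of \textbf{II}-histories consistent with $\sigma$ is countably branching (\textbf{II} always picks a finite subset of a fixed $\mc I^*$-set), hence countable; thus $\mc B:=\{\sigma(\vec F):\vec F\in T_\sigma\}\subseteq\mc I^*$ is countable. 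Assuming $\mc I$ is a weak P-ideal, take $Y\in\mc I^+$ with $Y\subseteq^* B$ for every $B\in\mc B$. Then \textbf{II} defeats $\sigma$ by a priority-based play: in round $n$, after \textbf{I} plays $A_n\in\mc B$, \textbf{II} plays $F_n$ consisting of the first $n+1$ still-uncovered elements of $Y\cap A_n$ (valid since $|F_n|\leq n+1$ and $F_n\subseteq A_n$). Since $\bigcup_nA_n\supseteq A_0\in\mc I^*$, the set $Y\setminus\bigcup_nA_n\subseteq Y\setminus A_0$ is finite, so almost every $y\in Y$ lies in some $A_n$; by accounting for the sizes of uncovered initial segments one checks that each such $y$ is eventually included in some $F_n$. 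Hence $\bigcup_mF_m$ is cofinite in $Y$ up to a finite set and lies in $\mc I^+$, contradicting that $\sigma$ is winning. Thus $\mc I$ is not a weak P-ideal.

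\emph{Player II, $\Leftarrow$.} Given the $\om$-diagonalising family $\{X_n:n\in\om\}$, fix a surjection $k\mapsto n_k$ with each fibre infinite. In round $k$, \textbf{I} plays $A_k\in\mc I^*$; applying $\mc I$-universality of $X_{n_k}$ (minus its finitely many already-used elements) to $\om\setminus A_k\in\mc I$, pick a fresh $x\in X_{n_k}$ with $x\cap(\om\setminus A_k)=\emptyset$, i.e.\ $x\subseteq A_k$, and play $F_k:=x$. Then $Y:=\bigcup_k F_k$ contains, for each $n$, infinitely many distinct members of $X_n$, each a subset of $Y$. If $Y\in\mc I$, condition $(\ast)$ supplies some $n$ with only finitely many $x\in X_n$ satisfying $x\subseteq Y$, contradicting the construction; so $Y\in\mc I^+$.

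\emph{Player II, $\Rightarrow$.} Assume \textbf{II} has a winning strategy $\sigma$. For each finite position $p=(A_0,\dots,A_{k-1})\in(\mc I^*)^{<\om}$ set $X_p:=\{\sigma(p^\frown A):A\in\mc I^*\}\subseteq[\om]^{<\om}$; each $X_p$ is $\mc I$-universal, since for $B\in\mc I$ the move $A:=\om\setminus B\in\mc I^*$ forces $\sigma(p^\frown A)\subseteq\om\setminus B$ to be disjoint from $B$. The plan is to extract a countable subfamily $\{X_{p_n}:n\in\om\}$ that $\om$-diagonalises $\mc I$: if no such subfamily worked, then for every candidate some bad $B\in\mc I$ would have $\{x\in X_{p_n}:x\subseteq B\}$ infinite for every $n$, and a K\"onig/fusion argument along the subtree of positions $p$ admitting an extension $A$ with $\sigma(p^\frown A)\subseteq B$ would produce an infinite $\sigma$-consistent play $(A_0,A_1,\dots)$ with every $\sigma((A_0,\dots,A_k))\subseteq B$; then $\bigcup_k\sigma((A_0,\dots,A_k))\subseteq B\in\mc I$ would contradict that $\sigma$ wins. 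The main obstacle is that $(\mc I^*)^{<\om}$ has uncountable branching, so a naive enumeration of positions does not yield a countable family; Laflamme's original argument uses the filter structure of $\mc I^*$ to confine the fusion to a countable subtree whose associated $X_p$'s simultaneously diagonalise every $B\in\mc I$ and support the contradictory infinite play. This pruning is the technical heart of the proof.
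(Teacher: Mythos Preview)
First, note that the paper does not prove this theorem; it merely cites Laflamme and states the result. So there is no ``paper's proof'' to compare against, and I assess your attempt on its own merits.

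Your two $\Leftarrow$ directions are fine.

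Your Player~\textbf{II} $\Rightarrow$ is openly incomplete: you describe the shape of the argument and then explicitly defer the key step (the ``pruning'' to a countable subtree) to Laflamme. That is an outline, not a proof; since this reduction is, as you say, the technical heart of the matter, what remains is essentially the statement itself.

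Your Player~\textbf{I} $\Rightarrow$ contains a genuine error. You claim that if \textbf{II} always plays the first $n{+}1$ still-uncovered elements of $Y\cap A_n$, then $\bigcup_n F_n$ is cofinite in $Y$. This is false. Take $\mc I=\mc Z$, let $\sigma$ be the (non-adaptive) strategy $A_n=\om\setminus[0,2^n]$, and set $Y=\om$ (a pseudointersection of $\mc B=\ran\sigma$). Your \textbf{II}-strategy then plays $n{+}1$ points from $[2^n{+}1,\infty)$ at stage $n$; a direct computation shows the unplayed set is infinite (e.g.\ $0,1,8,13,14,15,16,\dots$ are never covered), and in fact $\bigcup_n F_n$ has density zero, hence lies in $\mc Z$. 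So your play does \emph{not} beat $\sigma$. Of course this $\sigma$ is not winning (\textbf{II} could instead play $F_n=(2^n,2^{n+1}]$), but your argument never uses the hypothesis that $\sigma$ is winning: you assert outright that the specific play yields a set cofinite in $Y$, and that assertion is wrong.

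The accounting you sketch only establishes: if $y_k$ is unplayed then $y_k\notin A_n$ for every $n\ge k$. It does not exclude the situation in which $|Y\setminus A_n|$ grows so fast that infinitely many elements of $Y$ get permanently skipped. A correct argument needs either a different \textbf{II}-strategy---one that in round $n$ already covers an interval of $Y$ large enough to absorb the finite defect $Y\setminus A_{n+1}$ that \emph{will} appear---or a different organisation altogether; the bookkeeping you indicate does not suffice.
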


\begin{cor}\label{mathias}
Let $\mc{I}$ be a tall Borel ideal on $\om$. Then the following are equivalent: (a) The $\mbb{M}(\mc{I}^*)$-generic $+$-destroys $\mc{I}$. (b) $\mbb{M}(\mc{I}^*)$ $+$-destroys $\mc{I}$. (c) There is a forcing notion which $+$-destroys $\mc{I}$. (d) $\cov^*(\mc{I},+)>\om$.
\end{cor}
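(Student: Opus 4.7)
The plan is the cycle $(a)\Rightarrow(b)\Rightarrow(c)\Rightarrow(d)\Rightarrow(a)$. The implications $(a)\Rightarrow(b)\Rightarrow(c)$ are immediate from the definitions. For $(c)\Rightarrow(d)$ I invoke Observation~\ref{cardobs}(4): a countable $\{A_n:n\in\om\}\subseteq\mc{I}$ witnessing $\cov^*(\mc{I},+)=\om$ gives the $\Ubf{\Pi}^1_1$ property ``$\forall Y\in\mc{I}^+\,\exists n\,|A_n\cap Y|=\om$'', which is absolute between $V$ and $V^\PP$, so no forcing can $+$-destroy $\mc{I}$.

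The nontrivial direction is $(d)\Rightarrow(a)$. My plan is to combine Borel determinacy with Laflamme's theorem. Since $\cov^*(\mc{I},+)>\om$ just says $\mc{I}$ is a weak P-ideal and $\mc{I}$ is Borel, the game $G(\mc{I}^*,[\om]^{<\om},\mc{I}^+)$ is Borel and hence determined; by Laflamme, \textbf{I} has no winning strategy, so \textbf{II} has one, say $\tau\in V$. Crucially, ``$\tau$ is winning for \textbf{II}'' is $\Ubf{\Pi}^1_1$ in $\tau$ (universally quantify over $\mc{I}^*$-sequences, check that the resulting union of \textbf{II}-responses lies in the Borel set $\mc{I}^+$), so by Shoenfield absoluteness $\tau$ remains winning in $V[G]$ even against $V[G]$-valued Player~\textbf{I} sequences whose entries come from $\mc{I}^{*,V}\subseteq\mc{I}^{*,V[G]}$.

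Now fix $(s_0,F_0)\in\mbb{M}(\mc{I}^*)$. That $|Y_G\cap A|<\om$ for every $A\in\mc{I}^V$ is the standard density argument recalled in Section~\ref{mot}, so the remaining content of (a) is to force $Y_G\in\mc{I}^+$. For this I will exhibit, in $V[G]$, a set $Z\subseteq Y_G$ with $Z\in\mc{I}^+$. The idea is to pick a decreasing cofinal chain $(s_0,F_0)=(t_0,E_0)\geq(t_1,E_1)\geq\dots$ of conditions in $G$ (so $\bigcup_n t_n=Y_G$), feed the $E_n$'s to $\tau$ as \textbf{I}-moves to obtain $F^{\text{II}}_n=\tau(E_0,F^{\text{II}}_0,\dots,E_n)\in[E_n]^{<\om}$, and \emph{arrange} that $F^{\text{II}}_n\subseteq t_{n+1}$ at each stage. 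Then $Z:=\bigcup_n F^{\text{II}}_n\subseteq Y_G$, and applying absoluteness of ``$\tau$ wins'' to the $V$-valued sequence $(E_n)_{n\in\om}\in V[G]$ yields $Z\in\mc{I}^+$ in $V[G]$.

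The main technical obstacle is the arrangement $F^{\text{II}}_n\subseteq t_{n+1}$: the naive set $\{(t,E)\leq(t_n,E_n):F^{\text{II}}_n\subseteq t\}$ is \emph{not} dense below $(t_n,E_n)$, because a $\mbb{M}(\mc{I}^*)$-extension may already have discarded elements of $F^{\text{II}}_n\subseteq E_n$ from its filter part. I would handle this by passing to a dense subforcing $\PP^\tau\subseteq\mbb{M}(\mc{I}^*)\clrest (s_0,F_0)$ whose conditions carry the $\tau$-history as a tag: a condition is a triple $(s,F,h)$ with $h=(E_0,F^{\text{II}}_0,\ldots,E_k,F^{\text{II}}_k)$ a partial $\tau$-play satisfying $E_0=F_0$, $E_k=F$, and $F^{\text{II}}_i\subseteq s$ for all $i\geq 1$ (the very first response $F^{\text{II}}_0=\tau(F_0)$ may contain elements $\leq\max s_0$ and is discarded; this costs only a finite prefix of $Z$, which is harmless since $[\om]^{<\om}\subseteq\mc{I}$). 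Density of $\PP^\tau$ is established by simulating one game round strictly above $\max s$: given any $(s,F)\leq(s_0,F_0)$, play $E_{k+1}=F\cap(\max s,\om)\in\mc{I}^*$, compute $F^{\text{II}}_{k+1}\in[E_{k+1}]^{<\om}$ via $\tau$, and extend to $(s\cup F^{\text{II}}_{k+1},\,E_{k+1}\setminus(\max(s\cup F^{\text{II}}_{k+1}),\om),\,h^\frown(E_{k+1},F^{\text{II}}_{k+1}))$, which is a legitimate $\PP^\tau$-condition below $(s,F)$. Consequently every $\mbb{M}(\mc{I}^*)$-generic realises an infinite $\tau$-play whose \textbf{II}-moves from index $1$ on lie in $Y_G$, producing the desired $Z$ and completing $(d)\Rightarrow(a)$.
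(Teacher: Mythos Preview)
Your cycle and the easy implications match the paper. The substantive difference is in $(d)\Rightarrow(a)$, and there is a genuine gap.

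The paper uses the \emph{full} Laflamme characterisation: \textbf{II} wins $G(\mc{I}^*,[\om]^{<\om},\mc{I}^+)$ iff $\mc{I}$ is $\om$-diagonalisable by $\mc{I}$-universal sets, i.e.\ there is a countable family $\{X_n:n\in\om\}\subseteq(\mc{I}^{<\om})^+$ with property~$(\ast)$. This family is (coded by) a single real, $(\ast)$ is $\Ubf{\Pi}^1_1$ in that real and hence persists to $V^{\mbb{M}(\mc{I}^*)}$, and the sets $D_{n,m}=\{(s,F):\exists\,x\in X_n\;x\subseteq s\setminus m\}$ are dense, so the generic $R$ violates $(\ast)$ and therefore $R\in\mc{I}^+$. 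No simulation of plays is needed.

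Your route uses only half of Laflamme (determinacy plus ``\textbf{I} does not win'' gives ``\textbf{II} wins'') and then runs the strategy $\tau$ along the generic. The gap is the absoluteness step. The statement ``$\tau$ is winning for \textbf{II}'' is $\Ubf{\Pi}^1_1$ \emph{only} if the map $(E_n)_n\mapsto\bigcup_n F^{\text{II}}_n$ is Borel, which requires $\tau$ itself to be Borel-coded by a real. But $\tau$ is a function with uncountable domain $(\mc{P}(\om))^{<\om}$; Borel determinacy for games with uncountable move-sets does not in general yield Borel strategies, and Shoenfield absoluteness takes only real parameters. So you have no justification that $\tau$ still beats the $V[G]$-sequence $(E_n)_n$ (each $E_n\in V$, but the sequence is new). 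Your $\PP^\tau$-construction does arrange an infinite $\tau$-play inside $Y_G$, but that is moot without the absoluteness.

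The fix is precisely the other half of Laflamme's theorem: extract the countable family $\{X_n\}$ from the winning strategy. At that point the play-simulation machinery becomes unnecessary and you are back to the paper's two-line density argument.
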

\begin{proof} (a)$\rightarrow$(b)$\rightarrow$(c) is trivial and (c)$\rightarrow$(d) follows from Observations \ref{cardobs} (4). To show (d)$\to$(a), assume that $\cov^*(\mc{I},+)>\om$, i.e. that $\mc{I}$ is a weak P-ideal. As $G(\mc{I}^*,[\om]^{<\om},\mc{I}^+)$ is determined, \textbf{II} has winning strategy in this game, i.e. $\mc{I}$ $\om$-diagonalisable by $\mc{I}$-universal sets, fix such a family $\{X_n:n\in\om\}\subseteq (\mc{I}^{<\om})^+$. Notice that property ($\ast$) of this family is $\Ubf{\Pi}^1_1$ and hence holds in $V^{\mbb{M}(\mc{I}^*)}$ as well. It is straightforward to check that the sets
\[D_{n,m}=\big\{(s,F)\in\mbb{M}(\mc{I}^*):\exists\;x\in X_n\;\,x\subseteq s\setminus m\big\}\;\;(n,m\in\om)\]
are dense in $\mbb{M}(\mc{I}^*)$ and hence the generic $R\subseteq\om$ does not satisfy ($\ast$) (that is, $\forall$ $n$ $\exists^\infty$ $x\in X_n$ $x\subseteq R$), in particular, $V^{\mbb{M}(\mc{I}^*)}\models R\in\mc{I}^+$.
\end{proof}

If $\mc{I}$ is an ideal on $\om$, then the associated {\em Laver-Prikry forcing} $\mbb{L}(\mc{I}^*)$ is defined as follows (see \cite{BrHr} and \cite{HrMi}): $T\in\mbb{L}(\mc{I}^*)$ if $T\subseteq\,\!^{<\om}\om$ is a tree containing a (unique) $\mrm{stem}(T)\in T$ such that (i) $\forall$ $t\in T$ $(t\subseteq\mrm{stem}(T)$ or $\mrm{stem}(T)\subseteq t)$, and (ii)  $\mrm{ext}_T(t)=\{n:t^\frown(n)\in T\}\in\mc{I}^*$ for every $t\in T$, $\mrm{stem}(T)\subseteq t$; and $T_0\leq T_1$ if $T_0\subseteq T_1$.

$\mbb{L}(\mc{I}^*)$ is $\sigma$-centered (if $\mrm{stem}(T_0)=\mrm{stem}(T_1)$ then $T_0\| T_1$) and destroys $\mc{I}$: If $G$ is $\mbb{L}(\mc{I}^*)$-generic over $V$, $r_G=\bigcup\{\mrm{stem}(T):T\in G\}\in \,\!^\om\om$, and $Y_G=\mrm{ran}(r_G)$, then $Y_G\in [\om]^\om$ and $|Y_G\cap A|<\om$ for every $A\in\mc{I}^V$.

Perhaps the most important difference between $\mbb{M}(\mc{I}^*)$ and $\mbb{L}(\mc{I}^*)$ is that $\mbb{L}(\mc{I}^*)$ always adds dominating reals, and we know (see \cite{CRZ}) that for a Borel $\mc{I}$, $\mbb{M}(\mc{I}^*)$ adds dominating reals iff $\mc{I}$ is not $F_\sigma$. Another important, and for us relevant, difference between the two forcing notions is that while (see above) the $\mbb{M}(\mc{I}^*)$-generic object is $\mc{I}$-positive for every tall Borel $\mc{I}$ satisfying $\cov^*(\mc{I},+)>\om$, the $\mbb{L}(\mc{I}^*)$-generic object $Y_G$ is not necessarily, e.g. it is easy to see that $V^{\mbb{L}(\mc{ED}^*)}\models Y_{\mr{G}}\in\mc{ED}$. Of course, this does not mean that $\mbb{L}(\mc{ED}^*)$ cannot $+$-destroy $\mc{ED}$, and indeed, we already know that if $\PP$ destroys $\mc{ED}$ then it $+$-destroys $\mc{ED}$. We will see later that e.g. $\mbb{L}(\mc{Z}^*)$ cannot $+$-destroy $\mc{Z}$.

\smallskip
We say that an ideal $\mc{I}$ is {\em weakly Ramsey} if every $T\in\mbb{L}(\mc{I}^*)$ has a branch $x\in [T]$ such that $\mrm{ran}(x)\in\mc{I}^+$. We say that $\mc{I}$ is {\em $\om$-$+$-diagonalisable} if $\non^*(\mc{I},+)=\om$.

\begin{thm} {\em (see \cite{Lafl1})}
In $G(\mc{I}^*,\om,\mc{I}^+)$, \textbf{I} has a winning strategy iff $\mc{I}$ is not weakly Ramsey, and \textbf{II} has a winning strategy iff $\mc{I}$ is $\om$-$+$-diagonalisable.
\end{thm}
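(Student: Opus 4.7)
The plan is to prove the two biconditionals separately; in each, the easy direction builds an explicit strategy from the combinatorial hypothesis, while the hard direction extracts the hypothesis from a winning strategy.

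\textbf{Player \textbf{I}.} For ``$\mc{I}$ not weakly Ramsey $\Rightarrow$ \textbf{I} wins'', fix $T\in\mbb{L}(\mc{I}^*)$ with every branch's range in $\mc{I}$. Since a branch's range differs from its above-stem tail by the finite set $\mrm{ran}(\mrm{stem}(T))$, we may assume $T$ has empty stem, so $\mrm{ext}_T(t)\in\mc{I}^*$ for all $t\in T$. Player \textbf{I} then plays by following $T$: at round $n$, given responses $(k_0,\dots,k_{n-1})\in T$ (by induction), play $X_n=\mrm{ext}_T(k_0,\dots,k_{n-1})\in\mc{I}^*$; Player \textbf{II}'s forced responses trace a branch of $T$, whose range lies in $\mc{I}$. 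Conversely, from a winning strategy $\sigma$ for \textbf{I} the tree $T:=\{t\in\,\!^{<\om}\om:t(i)\in\sigma(t\clrest i)\;\forall\,i<|t|\}$ lies in $\mbb{L}(\mc{I}^*)$ with empty stem and, by the winning property, has every branch's range in $\mc{I}$.

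\textbf{Player \textbf{II}, easy direction.} Given a family $\{Y_n:n\in\om\}\subseteq\mc{I}^+$ witnessing $\non^*(\mc{I},+)=\om$, fix a bijection $\om\ni k\mapsto(n_k,m_k)\in\om^2$. At round $k$, Player \textbf{II} chooses $j_k\in Y_{n_k}\cap X_k$ with $j_k>m_k$; this is possible because $\om\setminus X_k\in\mc{I}$ forces $Y_{n_k}\cap X_k\in\mc{I}^+$, hence infinite. For every $n$, the subsequence $\{j_k:n_k=n\}\subseteq Y_n$ is unbounded in $\om$, so $\{j_k:k\in\om\}$ has infinite intersection with every $Y_n$ and therefore cannot lie in $\mc{I}$.

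\textbf{Player \textbf{II}, hard direction.} This is the main obstacle. Given a winning strategy $\tau$ for \textbf{II}, the plan is contrapositive: assume $\non^*(\mc{I},+)>\om$ and construct a play $\vec X\in(\mc{I}^*)^\om$ whose $\tau$-response set $\{\tau(X_0,\dots,X_n):n\in\om\}$ lies in $\mc{I}$, contradicting that $\tau$ wins. The key observation is that for any finite play $(X_0,\dots,X_{n-1})$ the round-$n$ ``response range''
$$R_n(X_0,\dots,X_{n-1}):=\big\{\tau(X_0,\dots,X_{n-1},X):X\in\mc{I}^*\big\}$$
lies in $\mc{I}^+$: for any $A\in\mc{I}$, plugging $X=\om\setminus A$ yields $\tau(\dots,\om\setminus A)\in\om\setminus A$, so $R_n\nsubseteq A$. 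Along any single prospective play, the countable family $\{R_n(X_0,\dots,X_{n-1}):n\in\om\}$ fails to be diagonalising, so there is $A\in\mc{I}$ meeting every $R_n$ in an infinite set; one then iteratively chooses $k_n\in A\cap R_n(X_0,\dots,X_{n-1})$ together with a witness $X_n\in\mc{I}^*$ realising $\tau(X_0,\dots,X_n)=k_n$, forcing $\{k_n:n\in\om\}\subseteq A\in\mc{I}$.

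The principal difficulty is that $A$ and $\vec X$ must be constructed simultaneously, since each $R_n$ depends on the $X_i$'s chosen earlier, while $A$ must satisfy the infinite-intersection condition against all $R_n$'s built later. This calls for a fusion/bookkeeping argument: one enumerates all finite ``side-plays'' and applies the failure of $\om$-$+$-diagonalisability once to the resulting countable collection of response ranges, ensuring that the witnessing $X_n$ chosen at stage $n$ does not destroy the infinite intersection of $A$ with the subsequent $R_{n+1}$. This is precisely the content of Laflamme's original proof in \cite{Lafl1}, and I expect the bookkeeping needed to dovetail the choice of $A$ with the inductive construction of $\vec X$ to be where the real technical work lies.
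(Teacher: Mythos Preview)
The paper does not prove this theorem; it is quoted from Laflamme \cite{Lafl1} and used as a black box for the subsequent corollary about $\mbb{L}(\mc{I}^*)$. So there is no ``paper's own proof'' to compare against, and your attempt has to be judged on its own.

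Your arguments for Player \textbf{I} (both directions) and for the easy direction for Player \textbf{II} are correct and complete.

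For the hard direction for Player \textbf{II} you correctly isolate the circularity: $A$ must be chosen against a countable family of response ranges $R_n$, but each $R_n$ depends on earlier $X_i$'s, which you want to choose using $A$. Your proposal to resolve this (``enumerate all finite side-plays'' and apply $\non^*(\mc{I},+)>\om$ once) is the right instinct but, as written, is not an argument. The point you are missing is that the circularity dissolves once you notice that each response range $R_n(X_0,\dots,X_{n-1})$ is a subset of $\om$, hence \emph{countable}. This lets you build, in advance and independently of $A$, a countable tree of witnesses: for each $k_0\in R_0$ fix one $X_0(k_0)\in\mc{I}^*$ with $\tau(X_0(k_0))=k_0$; for each such $k_0$ and each $k_1\in R_1(X_0(k_0))$ fix one $X_1(k_0,k_1)$; and so on. The resulting family of response ranges indexed by the nodes of this tree is countable, each lies in $\mc{I}^+$ by your own observation, and now a single application of $\non^*(\mc{I},+)>\om$ gives an $A\in\mc{I}$ meeting every one of them infinitely. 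Reading off a branch with all responses in $A$ is then immediate. Once you make this tree-of-witnesses step explicit, the proof is complete; without it, your sketch has a genuine gap.
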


\begin{cor}
Let $\mc{I}$ be a tall Borel ideal on $\om$. Then the following are equivalent: (a) The $\mbb{L}(\mc{I}^*)$-generic $+$-destroys $\mc{I}$. (b) $\non^*(\mc{I},+)=\om$.
\end{cor}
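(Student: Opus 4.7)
The plan is to combine Laflamme's theorem with Borel Determinacy. Since $\mc{I}$ is Borel, the game $G(\mc{I}^*,\om,\mc{I}^+)$ is determined (as explained at the beginning of this section), so exactly one of the players has a winning strategy. Laflamme's theorem then yields: $\mc{I}$ is $\om$-$+$-diagonalisable (i.e., (b)) iff \textbf{II} has a winning strategy iff \textbf{I} has no winning strategy iff $\mc{I}$ is weakly Ramsey. So I may freely translate between (b) and weak Ramseyness.

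For (a)$\Rightarrow$(b) I argue the contrapositive. If $\mc{I}$ is not weakly Ramsey, fix $T\in\mbb{L}(\mc{I}^*)$ such that $\mrm{ran}(x)\in\mc{I}$ for every $x\in[T]$. Since $\mc{I}$ is Borel, this is a $\Ubf{\Pi}^1_1$ statement (with parameter $T$), hence absolute between $V$ and $V^{\mbb{L}(\mc{I}^*)}$, so it continues to hold there for the generic branch $\mr{r}_G\in[T]$. Thus $T\vd\mrm{ran}(\mr{r}_G)\in\mc{I}$, contradicting (a).

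For (b)$\Rightarrow$(a) I plan to use the $\om$-$+$-diagonalising family $\{Y_n:n\in\om\}\subseteq\mc{I}^+$ directly to force $\mr{Y}_G=\mrm{ran}(\mr{r}_G)$ into $\mc{I}^+$. The key observation is that for each $n,m\in\om$, the set
\[ D_{n,m}=\big\{T\in\mbb{L}(\mc{I}^*):\mrm{ran}(\mrm{stem}(T))\cap Y_n\cap[m,\infty)\ne\0\big\} \]
is dense: given any $T_0$ with stem $s$, one has $\mrm{ext}_{T_0}(s)\in\mc{I}^*$ and $Y_n\cap[m,\infty)\in\mc{I}^+$, so their intersection lies in $\mc{I}^+$ and is in particular nonempty, allowing one to extend the stem of $T_0$ by a witness and land in $D_{n,m}$. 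By genericity $|\mr{Y}_G\cap Y_n|=\om$ for every $n$; since the diagonalising property of $\{Y_n\}$ is $\Ubf{\Pi}^1_1$ and therefore also holds in $V^{\mbb{L}(\mc{I}^*)}$, this forbids $\mr{Y}_G\in\mc{I}$ in the extension. Combined with the standard fact (recalled in the excerpt) that $\mr{Y}_G$ has finite intersection with every $A\in\mc{I}^V$, this yields (a).

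The main obstacle, in my view, is less technical than conceptual: recognising that the $\om$-$+$-diagonalising family itself supplies the dense sets in (b)$\Rightarrow$(a), rather than trying to thin $T_0$ to a Laver tree whose every branch has $\mc{I}$-positive range (which does not obviously follow from weak Ramseyness alone, and would require a stronger fusion). Once this shortcut is seen, both directions reduce to the absoluteness and density arguments above.
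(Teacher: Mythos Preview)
Your proof is correct and follows essentially the same route as the paper's: both directions hinge on the $\Ubf{\Pi}^1_1$-absoluteness of ``no branch of $T$ has $\mc{I}$-positive range'' for (a)$\Rightarrow$(b) and of the $\om$-$+$-diagonalising property for (b)$\Rightarrow$(a), together with the density of the sets $D_{n,m}$, and Borel Determinacy is invoked exactly as you do to pass from weak Ramseyness to $\non^*(\mc{I},+)=\om$.
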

\begin{proof}
(a)$\to$(b): First of all, (a) implies that $\mc{I}$ must be weakly Ramsey. If $T\in \mbb{L}(\mc{I}^*)$ does not have $\mc{I}$-positive branches, then this holds in $V^{\mbb{L}(\mc{I}^*)}$ as well because this property of $T$ is $\Ubf{\Pi}^1_1$, in particular, $T\vd Y_{\mr{G}}\in\mc{I}$. As $G(\mc{I}^*,\om,\mc{I}^+)$ is determined, $\non^*(\mc{I},+)=\om$.

(b)$\to$(a): If $\{X_n:n\in\om\}\subseteq\mc{I}^+$ witnesses $\non^*(\mc{I},+)=\om$, then this property of this family is $\Ubf{\Pi}^1_1$ hence it is still a witness of $\non^*(\mc{I},+)=\om$ in the extension as well. It is easy to show that the sets
\[D_{n,m}=\big\{T\in\mbb{L}(\mc{I}^*):\mrm{ran}(\mrm{stem}(T))\cap X_n\nsubseteq m\big\}\;\;(n,m\in\om)\]
are dense in $\mbb{L}(\mc{I}^*)$, in particular, in the extension $|Y_G\cap X_n|=\om$ for every $n$, and hence $Y_G\notin \mc{I}$.
\end{proof}

\begin{rem}
Let us recall the {\em Category Dichotomy} (see \cite{hrusakkatetov}): If \textbf{I} has winning strategy in $G((\mc{I}\clrest Y)^*,Y,(\mc{I}\clrest Y)^+)$ for some $Y\in\mc{I}^+$, then $\mc{ED}\leq_\mrm{K}\mc{I}\clrest X$ for some $X\in\mc{I}^+$; if \textbf{II} has winning strategy in $G((\mc{I}\clrest X)^*,X,(\mc{I}\clrest X)^+)$ for every $X\in\mc{I}^+$, then $\mc{I}\clrest X\leq_\mrm{K} \mrm{Nwd}$ for every $X\in\mc{I}^+$. In particular, if $\mc{I}$ is Borel, then one of these cases holds.

Now one may wonder if $\non^*(\mc{I},+)=\om$ has a characterisation using the Kat\v{e}tov preorder and the Category Dichotomy. This does not seem doable: We need that \textbf{II} has winning strategy in $G(\mc{I}^*,\om,\mc{I}^+)$, i.e. $\non^*(\mc{I},+)=\om$, but not necessarily in all $G((\mc{I}\clrest X)^*,X,(\mc{I}\clrest X)^+)$ games, e.g. if $\mc{I}=\mc{ED}\oplus\mrm{Nwd}$ (the ideal generated by disjoint copies of the two ideals, in this case on $(\om\times\om)\cup\mbb{Q}$) then $\non^*(\mc{I},+)=\om$ but $\mc{ED}=\mc{I}\clrest (\om\times\om)\nleq_\mrm{K}\mrm{Nwd}$.
\end{rem}

Notice that unlike in the case of $\mbb{M}(\mc{I}^*)$, the characterisation above says much less about $\mbb{L}(\mc{I}^*)$. In the next section, we will characterise when exactly $\mbb{L}(\mc{I}^*)$ $+$-destroys an analytic P-ideal $\mc{I}$.

\section{Fragile ideals}\label{secfrag}

\begin{df}
Let $\mc{I}$ be an ideal on $\om$. We say that $\mc{I}$ is {\em fragile} if there are a $Y\in\mc{I}^+$ and an $f:Y\to [\om]^{<\om}$ such that the following holds:
\begin{itemize}
\item[(a)] $f$ witnesses $\mc{I}^{<\om}\leq_\mrm{K}\mc{I}\clrest Y$, i.e. $f^{-1}[A^{<\om}]\in\mc{I}$ for every $A\in\mc{I}$;
\item[(b)] $\bigcup_{n\in H}f(n)\in\mc{I}^+$ for every infinite $H\subseteq Y$.
\end{itemize}
\end{df}

It is trivial to see that $\mc{I}^{<\om}\leq_\mrm{K} \mc{I}\upharpoonright Y$ for every $Y\in\mc{I}^+$, simply consider the map $n\mapsto\{n\}$. Loosely speaking, an ideal $\mc{I}$ is fragile if there is a very nontrivial reduction $\mc{I}^{<\om}\leq_\mrm{K} \mc{I}\upharpoonright Y$ for some $Y\in\mc{I}^+$. Notice that for Borel ideals, being fragile is a $\Ubf{\Sigma}^1_2$ property and hence absolute between $V$ and $V^\PP$.

\begin{fact}\label{fragile+dest} Let $\mc{I}$ be a fragile Borel ideal witnessed by $f:Y\to [\om]^{<\om}$. If a forcing notion $\PP$ destroys $\mc{I}\clrest Y$, then it $+$-destroys $\mc{I}$. In particular, if $\mc{I}\clrest Y\leq_\mrm{K}\mc{I}$ (e.g. $Y=\om$ or $\mc{I}$ is $K$-uniform, that is, $\mc{I}\clrest Y\leq_\mrm{K}\mc{I}$ for every $Y\in\mc{I}^+$), then destroying $\mc{I}$ implies $+$-destroying it.
\end{fact}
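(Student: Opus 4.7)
The plan is to take the name $\mr{H}$ witnessing that $\PP$ destroys $\mc{I}\clrest Y$ and let $\mr{Z}$ be the name $\bigcup\{f(n):n\in\mr{H}\}$. Concretely, assume $p\in\PP$ forces that $\mr{H}\in[Y]^\om$ and $|\mr{H}\cap A|<\om$ for every $A\in(\mc{I}\clrest Y)\cap V$. I would show that $p$ forces $\mr{Z}\in\mc{I}^+$ and $|\mr{Z}\cap A|<\om$ for every $A\in\mc{I}\cap V$, which is exactly $+$-destruction of $\mc{I}$.

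First, for $\mr{Z}\in\mc{I}^+$: property (b) of fragility says that for every infinite $H\subseteq Y$, $\bigcup_{n\in H}f(n)\in\mc{I}^+$. Since $\mc{I}$ is Borel and $Y,f$ are coded in $V$, this is a $\Ubf{\Pi}^1_1$ statement about $Y$ and $f$, hence by Shoenfield absoluteness it survives to $V^\PP$. Applied to $\mr{H}$, we get $p\vd \mr{Z}\in\mc{I}^+$. Second, for the finite intersection: fix $A\in\mc{I}\cap V$ and let $B=f^{-1}[A^{<\om}]=\{n\in Y:f(n)\cap A\ne\0\}$. By property (a) of fragility, $B\in\mc{I}\clrest Y$ (it lives in $V$), so $p\vd |\mr{H}\cap B|<\om$. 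But $\mr{Z}\cap A=\bigcup\{f(n)\cap A:n\in\mr{H}\cap B\}$ is a finite union of finite sets, hence finite. This completes the main implication.

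For the ``in particular'' part, suppose $\mc{I}\clrest Y\leq_\mrm{K}\mc{I}$ and $\PP$ destroys $\mc{I}$. By Observations \ref{cardobs} (6) (the K-version of indestructibility preservation), $\PP$ must then destroy $\mc{I}\clrest Y$ as well, and the first part of the fact delivers $+$-destruction of $\mc{I}$. The cases $Y=\om$ and $\mc{I}$ being K-uniform are immediate instances of the hypothesis $\mc{I}\clrest Y\leq_\mrm{K}\mc{I}$.

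The only nontrivial point is the absoluteness step for clause (b); everything else is a direct book-keeping argument combining the two defining properties of fragility with the destruction assumption. Since the excerpt already confirms that fragility itself is $\Ubf{\Sigma}^1_2$ and absolute, I do not expect the absoluteness to cause any real difficulty --- it is just a $\Ubf{\Pi}^1_1$-quantification over infinite subsets of $Y$ of a Borel condition on the resulting union.
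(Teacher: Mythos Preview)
Your proof is correct and follows essentially the same route as the paper: define the new positive set as $\bigcup_{n\in\mr{H}}f(n)$, use $\Ubf{\Pi}^1_1$-absoluteness of clause (b) to get positivity in the extension, and use clause (a) to see that $\mr{H}\cap f^{-1}[A^{<\om}]$ is finite for each $A\in\mc{I}\cap V$, hence so is the intersection with $A$. Your treatment of the ``in particular'' part via Observations~\ref{cardobs}(6) is exactly what the paper leaves implicit.
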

\begin{proof}
Let $\mathring{H}$ be a $\PP$-name such that $\vd_\PP$``$\mathring{H}\in [Y]^\om$ and $|\mathring{H}\cap A|< \om$ for every $A\in\mc{I}\cap V$''. In the extension,  $\mathring{X}=\bigcup_{n\in \mathring{H}}f(n)\in\mc{I}^+$ because (b) is a $\Ubf{\Pi}^1_1$ property of $(f(n))_{n\in Y}\in\,\!^Y\mrm{Fin}$; and $|\mathring{X}\cap A|<\om$ for every $A\in\mc{I}\cap V$ because for such an $A$, $f^{-1}[A^{<\om}]\in\mc{I}\cap V$, therefore $\mr{H}\cap f^{-1}[A^{<\om}]=\{n\in\mr{H}:f(n)\cap A\ne\0\}$ is finite.
\end{proof}

One can easily show that $\mc{ED}$ and $\mrm{Conv}$ are not fragile. Also, it is easy to see that $\mrm{Fin}\otimes\mrm{Fin}$ and $\mrm{Nwd}$ are K-uniform, and we know that $\mrm{Fin}\otimes\mrm{Fin}$ cannot be $+$-destroyed, and that $\mrm{Nwd}$ can be destroyed without being $+$-destroyed, hence they are not fragile either.  Our flagship example of a fragile K-uniform ideal is $\mc{ED}_\mrm{fin}$ (see the proof of Proposition \ref{edfin} (1)). Let us present ``very'' fragile summable and density ideals as well:

\begin{exa}
There are a tall summable ideal $\mc{I}_h$ and a tall density ideal $\mc{Z}_{\vec\mu}$ which are fragile with $Y=\om$ in the definition (and hence destruction of these ideals implies $+$-destruction of them): Fix an interval partition $(P_n)$ such that $|P_{n+1}|=2^{n+1}|P_n|$, let $h(k)=\mu_n(\{k\})=2^{-n}$ if $k\in P_n=\mrm{supp}(\mu_n)$, fix partitions $P_{n+1}=\bigcup_{k\in P_n}P^{n+1}_k$ such that $|P^{n+1}_k|=2^{n+1}$ for every $k\in P_n$ and $n\in\om$, and define $f(k)=P^{n+1}_k$ if $k\in P_n$.
\end{exa}

We will show that fragility plays a fundamental role when discussing whether an analytic P-ideal $\mc{I}$ is $+$-destroyed by $\mbb{L}(\mc{I}^*)$ or not, but first let us show some less trivial examples of not fragile ideals.

\begin{obs}\label{nicerest}
If an analytic P-ideal $\mc{I}=\mrm{Exh}(\varphi)$ is fragile witnessed by $f:Y\to [\om]^{<\om}$, then there are a $Z\subseteq Y$, $Z\in\mc{I}^+$ and an $\eps>0$ such that with $g=f\clrest Z$ (clearly, $g$ also witnesses fragility of $\mc{I}$) the following holds: (i) $g^{-1}[k^{<\om}]=\{z\in Z:g(z)\cap k\ne\0\}$ is finite for every $k\in\om$, and (ii) $\varphi(g(z))>\eps$ for every $z$. Why? Fix a $C\in\mc{I}$ such that $g^{-1}[k^{<\om}]\subseteq^* C$ for every $k$, and define $Z=Y\setminus C$. Then (i) holds. To show that (ii) also holds, assume on the contrary that there is a sequence $z_0<z_1<\cdots$ in $Z$ such that $\varphi(g(z_i))\xrightarrow{i\to\infty} 0$. Then there is an infinite $H=\{i_0<i_1<\cdots\}\subseteq\om$ such that $g(z_{i_0})<g(z_{i_1})<\cdots$ (because of (i)) and $\varphi(g(z_{i_m}))<2^{-m}$. Now $\bigcup_{i\in H} g(z_i)\in\mc{I}$ (because $\varphi$ is $\sigma$-subadditive) but this contradicts (b) in the definition of fragility.
\end{obs}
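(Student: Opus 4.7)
The plan is to use the P-ideal property of $\mc{I}$ to absorb all the $\mc{I}$-information needed for (i) into a single pseudo-union $C\in\mc{I}$, and then show that once (i) is secured, condition (ii) follows automatically from the $\sigma$-subadditivity of $\varphi$ combined with clause (b) of fragility.

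A preliminary reduction: we may assume $f(y)\ne\0$ for every $y\in Y$, since otherwise $H:=\{y\in Y:f(y)=\0\}$ would be infinite and $\bigcup_{y\in H}f(y)=\0\in\mc{I}$ would contradict clause (b). For the main step securing (i), observe that for every $k\in\om$ the finite set $k$ lies in $\mc{I}$, so clause (a) of fragility gives $A_k:=f^{-1}[k^{<\om}]=\{y\in Y:f(y)\cap k\ne\0\}\in\mc{I}$. Since $\mc{I}$ is a P-ideal, there is $C\in\mc{I}$ with $A_k\subseteq^* C$ for every $k$. Set $Z:=Y\setminus C\in\mc{I}^+$ and $g:=f\clrest Z$: then $\{z\in Z:g(z)\cap k\ne\0\}=A_k\setminus C$ is finite for every $k$, i.e., (i) holds. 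Clause (a) descends to $g$ by restriction, and clause (b) likewise since infinite subsets of $Z$ are infinite subsets of $Y$.

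It remains to produce $\eps>0$ witnessing (ii) for this $Z$. If no such $\eps$ exists, then $\inf_{z\in Z}\varphi(g(z))=0$, so, since $\varphi(g(z))>0$ for every $z$, one finds pairwise distinct $z_n\in Z$ with $\varphi(g(z_n))\to 0$. Using (i) to thin, I inductively choose $i_0<i_1<\cdots$ satisfying $\max g(z_{i_m})<\min g(z_{i_{m+1}})$ and $\varphi(g(z_{i_m}))<2^{-m}$; condition (i) makes this extraction possible because only finitely many $z\in Z$ have $g(z)$ meeting any fixed initial segment. Setting $X:=\bigcup_m g(z_{i_m})$, $\sigma$-subadditivity of $\varphi$ yields $\varphi\big(\bigcup_{m\geq N}g(z_{i_m})\big)\leq 2^{-N+1}\to 0$, and the block-separation of the $g(z_{i_m})$ forces $X\setminus n$ to be contained in such a tail for all sufficiently large $n$; hence $\|X\|_\varphi=0$, so $X\in\mrm{Exh}(\varphi)=\mc{I}$. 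This contradicts clause (b) applied to the infinite set $H=\{z_{i_m}:m\in\om\}\subseteq Z\subseteq Y$.

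The only mild obstacle is the coordination between the two clauses of fragility: clause (a) is used via the countable collection $\{A_k:k\in\om\}$, and it is crucial that $\mc{I}$ being a P-ideal (analytic P-ideals are P-ideals) allows these to be packed into a single pseudo-union so that the complement $Z$ simultaneously inherits condition (i). After that, the exhaustive-submeasure structure of $\mrm{Exh}(\varphi)$ does the rest against clause (b): failure of (ii) produces a sparse, summable sequence of values $\varphi(g(z_{i_m}))$ whose union is $\varphi$-exhaustive and thus in $\mc{I}$, which is precisely what clause (b) forbids.
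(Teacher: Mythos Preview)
Your proof is correct and follows essentially the same route as the paper's: use the P-ideal property to find a pseudounion $C\in\mc{I}$ of the sets $f^{-1}[k^{<\om}]\in\mc{I}$, pass to $Z=Y\setminus C$ to secure (i), and then derive (ii) by contradiction via a thinned sequence with $\varphi(g(z_{i_m}))<2^{-m}$ whose union lies in $\mc{I}$ by $\sigma$-subadditivity, violating clause (b). Your write-up is in fact slightly more careful than the paper's sketch (e.g., the preliminary reduction ruling out $f(y)=\0$ and the explicit block-separation argument for $\|X\|_\varphi=0$).
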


\begin{prop}
$\mc{I}_{1/n}$ is not fragile.
\end{prop}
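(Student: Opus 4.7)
The plan is to suppose $\mc{I}_{1/n}$ admits a fragility witness $f$ and derive a contradiction by exhibiting a specific $A \in \mc{I}_{1/n}$ that violates condition (a). First, I apply Observation~\ref{nicerest} to replace $f$ by $g = f\clrest Z$ where $Z \in \mc{I}_{1/n}^+$, $\varphi(g(z)) > \eps$ for some fixed $\eps > 0$, and $g^{-1}[k^{<\om}]$ is finite for every $k$ (so that $\min g(z) \to \infty$ along $Z$). The aim is to produce $A \in \mc{I}_{1/n}$ with $T(A) := \{z \in Z : g(z) \cap A \neq \0\} \in \mc{I}_{1/n}^+$.

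The main tool is a dyadic selection lemma: for any non-negative $(w_k)_k$ with $w_k < \infty$ for every $k$ and $\sum_k w_k/k = \infty$, there exists $A \in \mc{I}_{1/n}$ with $\sum_{k \in A} w_k = \infty$. Indeed, partition $\om\setminus\{0\}$ into dyadic blocks $I_n = [2^n, 2^{n+1})$; since $1/k \leq 1/2^n$ on $I_n$ gives $\sum_{k \in I_n} w_k \geq 2^n \sum_{k \in I_n} w_k/k$, picking $k_n \in I_n$ maximising $w_{k_n}$ yields $w_{k_n} \geq \sum_{k \in I_n} w_k/k$; then $A = \{k_n : n \in \om\}$ has $\sum 1/k_n \leq 2$ (so $A \in \mc{I}_{1/n}$) while $\sum_n w_{k_n} \geq \sum_k w_k/k = \infty$. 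I then apply this with $w_k := \nu(E_k) = \sum_{z : k \in g(z)} 1/z$, where $E_k = \{z \in Z : k \in g(z)\}$ and $\nu(X) = \sum_{z \in X} 1/z$. Fragility (a) applied to $\{k\} \in \mc{I}_{1/n}$ forces $E_k \in \mc{I}_{1/n}$, so $w_k < \infty$, and double-counting with (ii) gives
\[ \sum_k \frac{w_k}{k} \;=\; \sum_{z \in Z} \frac{\varphi(g(z))}{z} \;>\; \eps\sum_{z\in Z}\frac{1}{z} \;=\; \infty, \]
so the lemma hands me $A \in \mc{I}_{1/n}$ with $\sum_{k \in A} w_k = \infty$, which unfolds to $\sum_{z \in Z} |A \cap g(z)|/z = \infty$.

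The main obstacle, and the step that will require the most care, is to upgrade this \emph{multiplicity-weighted} divergence to the \emph{unweighted} $\nu(T(A)) = \sum_{z \in T(A)} 1/z = \infty$, which is what (a) forbids: a priori the sum could be concentrated on a few $z$'s with enormous $|A \cap g(z)|$. The plan is to insert a preparatory extraction before invoking the dyadic lemma. Using that $\mc{I}_{1/n}$ is a P-ideal, pick a pseudounion $B \in \mc{I}_{1/n}$ of the countable family $\{E_k\}_k$ and pass to $Z_1 = Z \setminus B \in \mc{I}_{1/n}^+$, on which every $E_k \cap Z_1$ is finite. Then greedily select $Z' \subseteq Z_1$ by enumerating $Z_1$ in order of increasing $\min g(z)$ and admitting $z$ iff $g(z)$ is disjoint from the union of previously admitted images; property (i) ensures the procedure produces infinitely many elements. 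On $Z'$ the sets $g(z)$ are pairwise disjoint, so each $w_k$ is either $0$ or $1/z$ for a unique $z$, and a further refinement, either restricting to $\{z \in Z' : |g(z)| \leq K\}$ for an appropriate $K$ or modifying the lemma to pick at most one element per $g(z)$, bounds $|A \cap g(z)|$ uniformly; combined, these convert $\sum_{k \in A} w_k = \infty$ into $\nu(T(A)) \geq \nu(T(A) \cap Z') = \infty$, contradicting (a). The hardest part is verifying that the greedy extraction preserves $\mc{I}_{1/n}^+$-positivity of $Z'$, which should follow from a careful accounting of the greedy skips (each confined to a finite subset of $Z_1$) but may require a second-moment probabilistic refinement as a fallback.
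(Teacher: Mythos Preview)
Your dyadic selection lemma and the double-counting identity $\sum_k w_k/k = \sum_{z\in Z}\varphi(g(z))/z$ are correct and do deliver an $A\in\mc{I}_{1/n}$ with $\sum_z |A\cap g(z)|/z=\infty$. The difficulty you yourself flag---upgrading this to $\nu(T(A))=\infty$---is real, and your proposed extraction does not close it.

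A minor point first: the P-ideal step is redundant, since property~(i) from Observation~\ref{nicerest} already makes every $E_k=\{z:k\in g(z)\}$ finite.

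The substantive gap is that the greedy extraction need not leave $Z'\in\mc{I}_{1/n}^+$, and this cannot be repaired by the ``careful accounting'' you sketch. Each admitted $z'$ does cause only finitely many rejections, but there is no control on the $\nu$-mass of those rejections in terms of $1/z'$. Indeed, conditions (i) and (ii) alone are consistent with all $g(z)$ for $z\in[2^n,2^{n+1})$ being the \emph{same} set $F_n$; then greedy admits one element per dyadic block and $\nu(Z')\le 2$. Such a $g$ of course violates (a), but this shows that any repair must invoke (a) or (b) nontrivially---and the natural attempt, applying (a) to $A=\bigcup_{z'\in Z'}g(z')$, fails because (b) forces this $A$ into $\mc{I}_{1/n}^+$. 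Even granting a positive $Z'$ with pairwise disjoint $g(z)$'s, the multiplicity issue survives: your $A$ takes one point per dyadic block, yet $g(z)$ can meet arbitrarily many dyadic blocks even under the bound $\eps<\varphi(g(z))<2\eps$, so $|A\cap g(z)|$ is unbounded and $\sum_{k\in A}w_k=\infty$ does not transfer to $\nu(T(A))=\infty$.

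The paper's proof avoids disjointification altogether. Working in the isomorphic copy on $\,\!^{<\om}2$ with $h(A)=\sum_{s\in A}2^{-|s|}$, it first normalises so that $Y=\bigcup_n Y_n$ with $h(Y_n)=1$ and $\bigcup f[Y_n]$ confined to a block $B_n$ of consecutive levels, then replaces each $Y_n$ by a single level $Z_n$, and---this is the key preparatory move---shrinks each $g(z)$ so that $\eps<h(g(z))<2\eps$. The upper bound is exactly what controls overlap: within each block one forms the matrix $v_z(k)=h(g(z)\cap\,\!^k2)$ and greedily selects one node $t_k$ per level lying in many not-yet-covered $g(z)$'s; the bound $h(g(z))<2\eps$ caps the total mass of already-covered columns by $2\eps$ times the number of picks, and comparison with the total matrix mass $>|Z_n|\eps$ yields $h(g^{-1}[F_n^{<\om}])\ge\eps/(1+2\eps)$. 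Summing over $n$ gives $A=\bigcup_n F_n\in\mc{I}$ with $h(g^{-1}[A^{<\om}])=\infty$. The block-local Fubini argument with the two-sided bound on $h(g(z))$ is the idea your outline is missing.
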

\begin{proof}
We will work with the canonical isomorphic copy $\mc{I}=\{A\subseteq\,\!^{<\om}2:h(A):=\sum_{s\in A}2^{-|s|}<\infty\}$ of $\mc{I}_{1/n}$, and assume on the contrary $\mc{I}$ is fragile. Applying the last Observation, we can assume that there are a $Y\in\mc{I}^+$, an $f:Y\to [\,\!^{<\om}2]^{<\om}$, and an $\eps>0$ such that $f^{-1}[A^{<\om}]\in\mc{I}$ for every $A\in\mc{I}$, $f^{-1}[(\,\!^{<n}2)^{<\om}]$ is finite for every $n$, and $h(f(y))>\eps$ for every $y\in Y$ (this implies that $f$ witnesses fragility of $\mc{I}$).

Further restricting $f$, we can assume that there is a sequence $m_0=0<m_1<\cdots$ such that if $I_n:=[m_n,m_{n+1})$, $B_n:=\bigcup_{k\in I_n}\,\!^k 2$, and $Y_n=Y\cap B_n$, then $h(Y_n)=1$ and $\cup f[Y_n]\subseteq B_n$.

Now, let $Z_n=\,\!^{m_{n+1}}2$, $Z=\bigcup_{n\in\om}Z_n\in\mc{I}^+$, fix a partition $Z_n=\bigcup_{y\in Y_n}Z^n_y$ such that $h(Z^n_y)=h(y)$,   and define $g:Z\to [\,\!^{<\om}2]^{<\om}$ by $g(z)=f(y)$ if $z\in Z^n_y$. It is trivial to show that $g$ still witnesses fragility of $\mc{I}$.

There is an $N$ such that $2^{-m_N}<\eps$, and hence, by shrinking values of $g$, we can assume that $\eps<h(g(z))<2\eps$ for every $z\in \bigcup_{n\geq N}Z_n$. Fix an $n\geq N$. We claim that we can pick single points at each level in $B_n$, that is, we can construct an $F_n=\{t_k:k\in I_n\}$ where $t_k\in \,\!^k2$ such that $h(g^{-1}[F_n^{<\om}])\geq \eps/(1+2\eps)$. Then we are done, because $h(F_n)<2^{-m_n+1}$ and hence $A=\bigcup_{n\geq N}F_n\in\mc{I}$ but $h(g^{-1}[A^{<\om}])=\infty$, a contradiction.

For every $z\in Z_n$ define the ``vector'' $v_z\in \,\!^{I_n}[0,\infty)$, $v_z(k)=h(g(z)\cap \,\!^k2)=|g(z)\cap \,\!^k 2|\cdot 2^{-k}$. Now $\eps<\sum v_z=h(g(z))< 2\eps$ for every $z\in Z_n$. Picture these vectors as columns next to each other in a $I_n\times Z_n$ matrix. The next diagram may help following the construction of $F_n$ below:
{\small
\[\def\arraystretch{1.2}
\begin{array}{>{\centering\arraybackslash$} p{1.1cm} <{$}||>{\centering\arraybackslash$} p{1.1cm} <{$}|>{\centering\arraybackslash$} p{1.1cm} <{$}|>{\centering\arraybackslash$} p{1.1cm} <{$}|>{\centering\arraybackslash$} p{1.1cm} <{$}||>{\raggedright\arraybackslash$} p{5.8cm} <{$}}
& Z_{n,m_n} & Z_{n,m_n+1} & Z_{n,m_n+2} & \hdots & \\ \cline{1-6}
m_n & &  &   & \cdots & \leadsto\lceil\sum\text{``whole row''}\rceil=a_{m_n} \\\cline{2-6}
m_n+1 & \cellcolor{gray} &  & & \cdots &  \leadsto\lceil\sum\text{``row - gray section''}\rceil=a_{m_n+1} \\ \cline{3-6}
m_n+2 & \cellcolor{gray} & \cellcolor{gray} & & \cdots & \leadsto\lceil\sum\text{``row - gray section''}\rceil=a_{m_n+2} \\ \cline{4-6}
m_n+3 & \cellcolor{gray} & \cellcolor{gray} &\cellcolor{gray} & \cdots & \leadsto\lceil\sum\text{``row - gray section''}\rceil=a_{m_n+3}\\ \cline{5-6}
\vdots & \cellcolor{gray}\vdots & \cellcolor{gray}\vdots & \cellcolor{gray}\vdots & \ddots & \multicolumn{1}{c}{\vdots}\\
\end{array}\]}

We will construct $t_k$ by recursion on $k\in I_n$ as follows: Let $a_{m_n}=\lceil\sum_{z\in Z_n}v_z(m_n)\rceil$. A trivial version of Fubini's theorem shows that there is a point $t_{m_n}\in \,\!^{m_n}2$ such that $|\{z\in Z_n:t_{m_n}\in g(z)\}|=|g^{-1}[\{t_{m_n}\}^{<\om}]|\geq a_{m_n}$, fix a $Z_{n,m_n}\in [Z_n]^{a_{m_n}}$ such that $t_{m_n}\in g(z)$ for every $z\in Z_{n,m_n}$. If we are done below $k$, define
\[ a_k=\big\lceil \sum\big\{v_z(k):z\in Z_n\setminus \big(Z_{n,m_n}\cup\cdots \cup Z_{n,k-1}\big)\big\}\big\rceil,\] fix a $t_k\in \,\!^k2$ such that $|g^{-1}[\{t_k\}^{<\om}]|\geq a_k$ and a $Z_{n,k}\in [Z_n\setminus (Z_{n,m_n}\cup\cdots \cup Z_{n,k-1})]^{a_k}$ such that $t_k\in g(z)$ for every $z\in Z_{n,k}$.

Of course, it is possible that $Z_n\setminus (Z_{n,m_n}\cup\cdots \cup Z_{n,k-1})=\0$. Then declare the empty sum to be $0$, let $t_k\in \,\!^k 2$ be arbitrary and $Z_{n,k}=\0$. Now the sum of all elements in this matrix is $S=\sum_{k\in I_n}\sum_{z\in Z_n}v_z(k)>|Z_n|\eps=2^{m_{n+1}}\eps$, also $S\leq \sum_{k\in I_n}a_k+\sum$``grey section'', and, by extending the grey section with all elements above it, we obtain that
\[ \sum\text{``grey section''}\leq \sum_{k\in I_n}\sum_{z\in Z_{n,k}}\sum_{\ell\in I_n} v_z(\ell)<\sum_{k\in I_n}a_k 2\eps.\]
Therefore, $2^{m_{n+1}}\eps<(1+2\eps)\sum_{k\in I_n}a_k$, and so $h(g^{-1}[F_n^{<\om}])\geq 2^{-m_{n+1}}\sum_{k\in I_n}a_k>\eps/(1+2\eps)$.
\end{proof}

\begin{prop}
$\mc{Z}$ is not fragile.
\end{prop}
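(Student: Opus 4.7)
The strategy mirrors the preceding proof for $\mc{I}_{1/n}$: assume fragility, apply Observation~\ref{nicerest} to obtain a tidy witness, then build an $A\in\mc{Z}$ for which $g^{-1}[A^{<\om}]\notin\mc{Z}$, contradicting condition (a) of the fragility definition. Realize $\mc{Z}=\mrm{Exh}(\varphi)$ for $\varphi(B)=\sup_n|B\cap P_n|/2^n$ with $P_n=[2^n,2^{n+1})$, so $B\in\mc{Z}$ iff $|B\cap P_n|/2^n\to 0$. Assume $\mc{Z}$ is fragile and apply Observation~\ref{nicerest} to fix $Z\in\mc{Z}^+$, $\eps>0$, and $g\colon Z\to[\om]^{<\om}$ such that $\varphi(g(z))>\eps$ for every $z$ and $g^{-1}[k^{<\om}]$ is finite for every $k$. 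For each $z$ choose $n(z)$ with $|g(z)\cap P_{n(z)}|>\eps\, 2^{n(z)}$. Finiteness of $g^{-1}[k^{<\om}]$ forces $\min g(z)\to\infty$ as $z\to\infty$, hence $n(z)\to\infty$ as well. From $Z\in\mc{Z}^+$ extract $\delta>0$ and an infinite $K\subseteq\om$ with $|Z\cap P_k|\geq\delta\, 2^k$ for $k\in K$.

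The construction is probabilistic: let $A\cap P_m$ be an independent Bernoulli-$p_m$ random subset of $P_m$ with $p_m=1/m$. A Chernoff bound for $|A\cap P_m|\sim\mrm{Bin}(2^m,1/m)$ combined with Borel--Cantelli gives $|A\cap P_m|/2^m\to 0$ almost surely, so $A\in\mc{Z}$ a.s. By independence across levels, for every $z$ with $n(z)=m$,
\[ P\big[g(z)\cap A=\0\big]\leq (1-p_m)^{|g(z)\cap P_m|}\leq \exp(-\eps\, 2^m/m), \]
which is tiny for large $m$. Fix $m_0$ so that $\eta:=\exp(-\eps\, 2^{m_0}/m_0)<1/4$, and then $k_0$ so that every $z\in Z\cap P_k$ with $k\geq k_0$ satisfies $n(z)\geq m_0$. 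For such $k\in K$ this gives $E\big[|Z\cap P_k\setminus g^{-1}[A^{<\om}]|\big]\leq \eta\,|Z\cap P_k|$.

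Setting $X_k:=|Z\cap P_k\cap g^{-1}[A^{<\om}]|$, Markov's inequality applied to the nonnegative variable $|Z\cap P_k|-X_k$ gives $P[X_k\geq(1-2\eta)|Z\cap P_k|]\geq 1/2$ for every $k\in K$ with $k\geq k_0$. Fatou's lemma applied to the indicators of the \emph{complementary} events then yields that with probability at least $1/2$ the bound $X_k\geq(1-2\eta)\delta\, 2^k$ holds for infinitely many $k\in K$. Together with $P[A\in\mc{Z}]=1$, an actual $A\in\mc{Z}$ exists with $|g^{-1}[A^{<\om}]\cap P_k|/2^k\geq(1-2\eta)\delta$ for infinitely many $k$, so $g^{-1}[A^{<\om}]\notin\mc{Z}$, a contradiction.

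The main obstacle is that the events $\{g(z)\cap A\ne\0\}$ for different $z\in Z\cap P_k$ are positively correlated (the sets $g(z)$ may overlap across various $P_j$'s), so one cannot apply a concentration inequality to $X_k$ directly. The key trick is that only the \emph{existence} of a single suitable $A$ is required; a first-moment bound via Markov combined with Fatou's lemma therefore suffices, avoiding any second-moment analysis.
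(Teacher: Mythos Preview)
Your argument is correct and takes a genuinely different route from the paper's.  The paper proceeds deterministically: after the same appeal to Observation~\ref{nicerest}, it first reduces to the case where each $f(y)$ lies entirely inside a single block $P_{k(y)}$ (shrinking values preserves both the $\varphi>\eps$ lower bound and the Kat\v{e}tov condition), and further arranges $Y=\bigcup_n Y_n$ with $Y_n\subseteq P_{m(n)}$ and the target blocks separated by level.  Then, for each $n$ and each target block $P_{k_i}$ hit by some $y\in Y_n$, a straight averaging argument produces a single point $a_{n,i}\in P_{k_i}$ contained in at least an $\eps$-fraction of the relevant $f(y)$'s; the set $A=\bigcup_n\{a_{n,i}:i\}$ then meets each block in at most one point (so $A\in\mc{Z}$) while $|f^{-1}[A^{<\om}]\cap P_{m(n)}|\geq\eps|Y_n|$ gives $f^{-1}[A^{<\om}]\notin\mc{Z}$.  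Your probabilistic construction sidesteps the reduction to single-block images entirely: you only need one good block $P_{n(z)}$ per $z$, and the random $A$ with density $1/m$ in $P_m$ hits that block with overwhelming probability, so a first-moment bound plus reverse Fatou extracts a suitable $A$.  The paper's version is slightly more elementary (no probability, just pigeonhole), while yours is more robust in that it tolerates $g(z)$ being spread across many blocks and needs fewer preliminary reductions.
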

\begin{proof}
Let $P_n=[2^n,2^{n+1})$, and $\varphi(A)=\sup_{n\to\infty}|A\cap P_n|/2^n$. Then clearly $\|A\|_\varphi=\limsup_{n\to\infty}|A\cap P_n|/2^n$ and $\mc{Z}=\mrm{Exh}(\varphi)$. Assume on the contrary that $\mc{Z}$ is fragile, that is, by applying Observation \ref{nicerest}, that there are a $Y\in\mc{Z}^+$, an $f:Y\to [\om]^{<\om}$, and an $\eps>0$ such that $f^{-1}[A^{<\om}]\in\mc{Z}$ for every $A\in\mc{Z}$, $f^{-1}[n^{<\om}]$ is finite for every $n$, and $\varphi(f(y))>\eps$ for every $y\in Y$. Furthermore, we can assume the following:
\begin{itemize}
\item[(i)] $f(y)\subseteq P_{k(y)}$ for some $k(y)\in\om$ for every $y\in Y$;
\item[(ii)] there is a sequence $m(0)<m(1)<m(2)<\cdots$ such that $Y=\bigcup_{n\in\om} Y_n$ where $Y_n\subseteq P_{m(n)}$ and $\varphi(Y_n)=|Y_n|/2^{m(n)}>\|Y\|_\varphi-2^{-n}$ for every $n$;
\item[(iii)] $\max\{k(y):y\in Y_n\}<\min\{k(y):y\in Y_{n+1}\}$ for every $n$.
\end{itemize}
We will define a sequence $A_0<A_1<\cdots$ of finite sets such that $A_n\subseteq \bigcup_{y\in Y_n}P_{k(y)}$, $|A_n\cap P_k|\leq 1$ for every $k$, and $\varphi(f^{-1}[A_n^{<\om}])=|f^{-1}[A_n^{<\om}]|/2^{m(n)}\geq \eps \varphi(Y_n)$. In particular, if $A=\bigcup_{n\in\om}A_n$ then $A\in\mc{Z}$ and $\|f_n^{-1}[A^{<\om}]\|_\varphi\geq\eps\|Y\|_\varphi$, a contradiction.

Fix an $n$, let $\{k_i:i<d\}=\{k(y):y\in Y_n\}$ be an enumeration, and partition $Y_n$ accordingly, that is, $Y_n=\bigcup_{i<d} Q_i$ where $y\in Q_i$ iff $k(y)=k_i$. Now in $P_{k_i}$ we have $|Q_i|$ many sets $f(y)\in [P_{k_i}]^{>\eps|P_{k_i}|}$. Counting with multiplicity, at least $|Q_i|\eps|P_{k_i}|$ many points are covered by these sets in $P_{k_i}$, and hence there must be an $a_{n,i}\in P_{k_i}$ which is contained at least in $\eps|Q_i|$ many of these sets. Now if $A_n=\{a_{n,i}:i<d\}$ then
\[ |f^{-1}[A_n^{<\om}]|=\big|\big\{y\in Y_n:A_n\cap f(y)\ne\0\big\}\big|\geq \sum_{i<d}\eps|Q_i|=\eps|Y_n|,\]
and so $\varphi(f^{-1}[A_n^{<\om}])\geq\eps\varphi(Y_n)$.
\end{proof}

In the case of $\mrm{tr}(\mc{N})$, we know more, as one can show that $\mrm{tr}(\mc{N})$ is K-uniform (or see e.g. \cite[Thm. 2.1.17]{meza}), Fact \ref{fragile+dest} and the following easy one imply that $\mrm{tr}(\mc{N})$ is not fragile either.

\begin{fact}
The random forcing cannot $+$-destroy $\mrm{tr}(\mc{N})$, in particular,  $\cov^*(\mrm{tr}(\mc{N}),+)<\cov^*(\mrm{tr}(\mc{N}),\infty)$ in the random model.
\end{fact}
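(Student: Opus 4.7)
The plan is to invoke Theorem~\ref{wcpfi} with $I=\mc{N}$ on $\,\!^\om 2$, $\mc{J}=\mrm{tr}(\mc{N})$ (identified with an ideal on $\om$ via a fixed bijection $\om\leftrightarrow\,\!^{<\om}2$), and $\mc{D}=\mc{J}^+$. Since $\mbb{B}=\PP_\mc{N}$ is ccc (hence proper), the question reduces to the combinatorial condition on the right-hand side of that equivalence: given any sequence $(B_s)_{s\in\,\!^{<\om}2}$ of Borel subsets of $\,\!^\om 2$ for which
\[C:=\big\{x:T_x:=\{s:x\in B_s\}\in\mrm{tr}(\mc{N})^+\big\}\]
has positive Lebesgue measure, I must produce an $S\in\mrm{tr}(\mc{N})$ such that $\{x:|\{s\in S:x\in B_s\}|=\om\}$ also has positive measure.

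The heart of the argument is a single Fubini application. By hypothesis, for every $x\in C$ the $G_\delta$-closure $[T_x]_\delta$ has positive measure. Applying Fubini to the Borel set
\[D=\{(x,y)\in C\times\,\!^\om 2:y\in[T_x]_\delta\}=\{(x,y):x\in C,\ \exists^\infty n\ x\in B_{y\clrest n}\}\]
yields some $y\in\,\!^\om 2$ whose section $D^y$ still has positive measure. The whole proof then hinges on a single observation: the chain $S:=\{y\clrest n:n\in\om\}$ satisfies $[S]_\delta=\{y\}\in\mc{N}$, so $S\in\mrm{tr}(\mc{N})$, and by the very definition of $D^y$ we have $|\{s\in S:x\in B_s\}|=\om$ for every $x\in D^y$. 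The main conceptual content — really the only point where anything specific about $\mrm{tr}(\mc{N})$ is used — is this observation that $\mrm{tr}(\mc{N})$ contains every infinite chain in $\,\!^{<\om}2$; beyond this, only Fubini for $\lam\times\lam$ is invoked.

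For the ``in particular'', one first records the general inequality $\cov(\mc{N})\leq\cov^*(\mrm{tr}(\mc{N}),\infty)$, obtained by noting that any $\infty$-covering family $\mc{C}\subseteq\mrm{tr}(\mc{N})$ yields the cover $\{[A]_\delta:A\in\mc{C}\}$ of $\,\!^\om 2$ by null sets (apply the $\infty$-covering property to each chain $\{y\clrest n:n\in\om\}$, $y\in\,\!^\om 2$). In the usual random model over a CH ground model this gives $\cov^*(\mrm{tr}(\mc{N}),\infty)\geq\cov(\mc{N})=\om_2$. In the other direction, the $+$-indestructibility just established, combined with a routine ccc factorisation (any $Y\in\mrm{tr}(\mc{N})^+$ in the extension is added by countably many random reals, and a density argument below a condition forcing $\mr{Y}\in\mrm{tr}(\mc{N})^+$ produces an $A\in\mrm{tr}(\mc{N})\cap V$ with $|\mr{Y}\cap A|=\om$), shows that $\mrm{tr}(\mc{N})\cap V$ remains a $+$-covering family in the extension, so $\cov^*(\mrm{tr}(\mc{N}),+)\leq|\mrm{tr}(\mc{N})\cap V|=\om_1<\om_2$. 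The only genuinely technical point in the whole proof is checking Borelness of $D$ and that the Fubini section argument applies, which is routine; I do not expect any real obstacle.
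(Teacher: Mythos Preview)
Your argument is correct. Both proofs rest on the same two observations: every chain $\{y\clrest n:n\in\om\}$ lies in $\mrm{tr}(\mc{N})$, and for any $X\in\mrm{tr}(\mc{N})^+$ one can find a real $y\in[X]_\delta$ whose chain then meets $X$ infinitely often. The paper obtains such a $y$ directly in the forcing extension by quoting the classical fact that $V^{\mbb{B}}\models\lam^*(\,\!^\om 2\cap V)=1$ (so the positive set $[X]_\delta$ contains a ground model real), whereas you route the same idea through Theorem~\ref{wcpfi} and an explicit Fubini computation. Since the cited fact about random forcing is itself proved by Fubini, you have essentially unwound the paper's one-line citation into a self-contained argument; this buys independence from the external reference at the cost of a longer proof, while the paper's version is shorter and makes the connection to the standard property of random forcing more visible. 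Your treatment of the ``in particular'' clause (which the paper leaves implicit) is also correct, including the ccc factorisation step.
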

\begin{proof}
We know (see e.g. \cite[Lem. 6.3.12]{BaJu}) that $V^\mbb{B}\models \lam^*(\,\!^\om 2\cap V)>0$ and hence $V^\mbb{B}\models \lam^*(\,\!^\om 2\cap V)=1$, i.e. every positive Borel set coded in $V^\mbb{B}$, e.g. $[X]_\delta$ for $X\in\mrm{tr}(\mc{N})^+\cap V^\mbb{B}$, contains ground model reals. In the case of $[X]_\delta$, the branch associated to such a ground model real has infinite intersection with $X$.
\end{proof}

In the proof of the main result of this section, we will need the following technical lemma:

\begin{lem}\label{fragilepropK}
An analytic P-ideal $\mc{I}$ is not fragile if, and only if the following holds for a (equivalently, for every) lsc submeasure $\varphi$ generating $\mc{I}$: \textbf{IF} $\eps\in (0,\|\om\|_\varphi)$, $Y_n\in\mc{I}^+$, and $f_n:Y_n\to\mc{H}_{\varphi,\eps}:=\{F\in [\om]^{<\om}:\varphi(F)>\eps\}$ such that $f_n^{-1}[\{H\}]\in\mc{I}$ for every $n\in\om$ and $H\in [\om]^{<\om}$, \textbf{THEN} there is an $A\in\mc{I}$ such that $f_n^{-1}[A^{<\om}]\in\mc{I}^+$ for every $n$.
\end{lem}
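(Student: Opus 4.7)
I would prove the two directions separately, handling the easy direction first.

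For $(2) \Rightarrow (1)$, I argue by contrapositive. Assume $\mc{I}$ is fragile, witnessed by some $Y \in \mc{I}^+$ and $f : Y \to [\om]^{<\om}$. Applying Observation \ref{nicerest}, I pass to a restriction $(Z, g)$ that still witnesses fragility and in addition satisfies $\varphi(g(z)) > \eps_0$ for some $\eps_0 > 0$ and $g^{-1}[k^{<\om}]$ finite for every $k$ -- in particular $g^{-1}[\{H\}] \in \mc{I}$ for every $H \in [\om]^{<\om}$. Choose any $\eps \in (0, \min(\eps_0, \|\om\|_\varphi))$ and consider the constant sequence $(Y_n, f_n) := (Z, g)$; the hypothesis of the right-hand side of the lemma is met, but condition (a) of fragility forbids any $A \in \mc{I}$ from satisfying $g^{-1}[A^{<\om}] \in \mc{I}^+$. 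So property (2) fails.

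For $(1) \Rightarrow (2)$, the plan is to build the desired $A$ as a disjoint union $A = \bigsqcup_n A_n$ with $A_n \subseteq [N_n, N_{n+1})$ for a sufficiently sparse increasing sequence $(N_n)$ and with $\varphi(A_n) < 2^{-n}$; then $\|A\|_\varphi = 0$, so $A \in \mc{I}$, while $f_n^{-1}[A^{<\om}] \supseteq f_n^{-1}[A_n^{<\om}]$, so the only real task is arranging $f_n^{-1}[A_n^{<\om}] \in \mc{I}^+$. For a fixed $n$, use the following dichotomy. If there is some $j$ with $\{y \in Y_n : j \in f_n(y)\} \in \mc{I}^+$, set $A_n = \{j\}$ for an appropriate $j$ (tallness of $\mc{I}$, hence $\varphi(\{j\}) \to 0$, gives $\varphi(A_n) < 2^{-n}$ provided arbitrarily large such $j$ exist). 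Otherwise $\{y \in Y_n : j \in f_n(y)\} \in \mc{I}$ for every $j$, and the P-ideal property of $\mc{I}$ yields a single $C_n \in \mc{I}$ with $\{y : j \in f_n(y)\} \subseteq^* C_n$ for every $j$; letting $Y_n' = Y_n \setminus C_n \in \mc{I}^+$, the function $y \mapsto \min f_n(y)$ tends to infinity on $Y_n'$. Hence from any infinite $H \subseteq Y_n'$ one extracts a subsequence with pairwise disjoint $f_n$-values and $\min \to \infty$; together with $\varphi(f_n(y)) > \eps$ and lower semicontinuity of $\varphi$, this forces $\|\bigcup_{y \in H} f_n(y)\|_\varphi \geq \eps$, so (b) of fragility holds for $(Y_n', f_n\clrest Y_n')$. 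Since $\mc{I}$ is not fragile, (a) fails on this restriction, producing $A_n \in \mc{I}$ with $f_n^{-1}[A_n^{<\om}] \cap Y_n' \in \mc{I}^+$; truncating to $A_n \cap [N_n, \infty)$ for $N_n$ large preserves positivity -- because $f_n^{-1}[N_n^{<\om}] \cap Y_n'$ is finite -- while pushing $\varphi(A_n)$ below $2^{-n}$ (using $\|A_n\|_\varphi = 0$).

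The main obstacle is the branch of the first dichotomy where the ``tall singletons'' $\{j : \{y \in Y_n : j \in f_n(y)\} \in \mc{I}^+\}$ happen to be uniformly bounded, so one cannot push the corresponding $A_n$ out to $[N_n, \infty)$. The fix is to stratify further: either collect all offending small indices across such $n$'s into a single set of vanishing $\|\cdot\|_\varphi$-norm (using tallness and a pigeonhole on $\varphi(\{j\})$) and absorb it once into $A$, or reduce the corresponding $n$'s back to the P-ideal branch on a suitable $\mc{I}^+$-subset of $Y_n$ and then invoke non-fragility as above. Verifying that this global bookkeeping produces a single $A \in \mc{I}$ satisfying $f_n^{-1}[A^{<\om}] \in \mc{I}^+$ simultaneously for all $n$ is the most delicate part of the argument.
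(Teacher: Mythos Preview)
Your easy direction matches the paper's. For the hard direction, however, the paper argues by contrapositive, and this is not merely cosmetic: it is exactly what closes the gap you leave open at the end.

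Concretely, your ``main obstacle'' is a genuine gap. When $\mrm{Bad}_n=\{j:f_n^{-1}[\{j\}^{<\om}]\in\mc{I}^+\}$ is nonempty but bounded, neither of your fixes works as stated. For the first, nothing prevents $\mrm{Bad}_n=\{a_n\}$ with the $a_n$ distinct and enumerating an $\mc{I}^+$ set, so collecting the offending singletons across all $n$ need not yield an element of $\mc{I}$. For the second, you cannot ``reduce to the P-ideal branch'' by restricting the domain: for $j\in\mrm{Bad}_n$ the set $f_n^{-1}[\{j\}^{<\om}]$ remains $\mc{I}^+$ inside every $\mc{I}^+$ subset of $Y_n$. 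One must instead modify the \emph{values}, replacing $f_n(y)$ by $f_n(y)\setminus\mrm{Bad}_n$, and then the difficulty is to keep $\varphi(f_n(y)\setminus\mrm{Bad}_n)$ bounded away from $0$ so that property~(b) survives. This requires first shrinking each $f_n(y)$ to be $\eps$-minimal (i.e.\ $\varphi(f_n(y))\geq\eps$ but $\varphi(F)<\eps$ for every proper $F\subsetneq f_n(y)$), and then observing that $\eps'_n:=\eps-\max\{\varphi(F):F\subseteq\mrm{Bad}_n,\,\varphi(F)<\eps\}>0$ since $\mrm{Bad}_n$ is finite. This idea is absent from your sketch, and without it your direct construction does not go through.

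The paper avoids this per-$n$ bookkeeping entirely. Assuming the conclusion fails, it writes $\mc{I}=\bigcup_n\mc{A}_n$ with $\mc{A}_n=\{A\in\mc{I}:f_n^{-1}[A^{<\om}]\in\mc{I}\}$ and uses the P-ideal property to find a \emph{single} $N$ for which $\mc{A}_N$ is $\subseteq^*$-cofinal in $\mc{I}$. This yields the key property $(\star)$: for every $B\in\mc{I}$ there is $m$ with $f_N^{-1}[(B\setminus m)^{<\om}]\in\mc{I}$. From $(\star)$ one sees that $\mrm{Bad}$ (for this one $N$) is finite, and then the minimality and $\eps'$ trick above, applied \emph{once}, produces a single witness to fragility. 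Your approach can be repaired by inserting the same trick at every $n$, but the contrapositive route is both shorter and conceptually cleaner: the cofinality step collapses the countable family to one function before any surgery is needed.
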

\begin{proof}
Assume first that $\mc{I}$ is fragile witnessed by an $f:Y\to [\om]^{<\om}$. By Observation \ref{nicerest}, we can assume that $\ran(f)\subseteq\mc{H}_{\varphi,\eps}$ for some $\eps>0$, therefore the trivial sequence $f_n=f$ witnesses that the second statement fails.

\smallskip
Conversely, assume that $\mc{I}=\mrm{Exh}(\varphi)$ and that the second statement does not hold, that is, there are $Y_n\in\mc{I}^+$ and $f_n:Y_n\to\mc{H}_{\varphi,\eps}$ for some $\eps>0$ such that $f^{-1}_n[\{H\}]\in\mc{I}$ for every $n$ and $H$, and $\forall$ $A\in\mc{I}$ $\exists$ $n_A\in\om$ $f^{-1}_{n_A}[A^{<\om}]\in\mc{I}$. We can assume that $\mc{I}$ is tall, otherwise it is trivially fragile. By shrinking the values of these functions, we can assume that $\varphi(f_n(y))\geq \eps$ for every $n$ and $y\in Y_n$ but $\varphi(F)<\eps$ for every $F\subsetneq f_n(y)$ (this can be show by recursively removing points from $f_n(y)$ until we can).  Let $\mc{A}_n=\{A\in\mc{I}:n_A=n\}$. Then $\mc{I}=\bigcup_{n\in\om}\mc{A}_n$ and hence there is an $N$ such that $\mc{A}_N$ is $\subseteq^*$-cofinal in $\mc{I}$. Then
\[\tag{$\star$} \forall\;B\in \mc{I}\;\exists\;m\in\om\;f^{-1}_N[(B\setminus m)^{<\om}]\in\mc{I}\]
holds, in particular, the set $\mrm{Bad}=\{k\in\om:f^{-1}_N[\{k\}^{<\om}]\in\mc{I}^+\}$ is finite (otherwise there was an infinite $B\in\mc{I}$ such that $f_N^{-1}[\{k\}^{<\om}]\in\mc{I}^+$ for every $k\in B$).

Now, fix a $C\in\mc{I}$ which almost contains  $f_N^{-1}[\{k\}^{<\om}]$ for every $k\in\om\setminus\mrm{Bad}$, and define $Y=Y_N\setminus (C\cup f_N^{-1}[\mc{P}(\mrm{Bad})])\in\mc{I}^+$ and $f:Y\to[\om]^{<\om}$, $f(y)=f_N(y)\setminus\mrm{Bad}$. Then $f$ witnesses $\mc{I}^{<\om}\leq_\mrm{K}\mc{I}\clrest Y$ because if $B\in\mc{I}$ and $m$ is as in ($\star$) then  $f^{-1}[B^{<\om}]\subseteq f^{-1}[m^{<\om}]\cup f^{-1}[(B\setminus m)^{<\om}]$ where $f^{-1}[m^{<\om}]\in\mc{I}$ because $f(y)\cap \mrm{Bad}=\0$ for every $y\in Y$, and $f^{-1}[(B\setminus m)^{<\om}]\in\mc{I}$ because of ($\star$).

It is left to show that $\bigcup_{n\in H}f(n)\in\mc{I}^+$ for every $H\in [Y]^\om$ (then $f$ witnesses that $\mc{I}$ is fragile). By removing $C$ from $Y_N$, we ensured that $f^{-1}[\{k\}^{<\om}]$ and hence $f^{-1}[k^{<\om}]$ is finite for every $k$. In particular, if $H=\{a_0<a_1<\cdots\}\subseteq Y$ is infinite, then we can assume that $f(a_0)<f(a_1)<\cdots$. To finish the proof we show that there is an $\eps'>0$ such that $\varphi(f(y))\geq\eps'$ for every $y\in Y$ (and hence $\|\bigcup_{i\in\om}f(a_i)\|_\varphi\geq\eps'$). We know that $f_N(y)\nsubseteq \mrm{Bad}$ for any $y\in Y$, and hence $\varphi(f_N(y)\cap \mrm{Bad})<\eps$. This implies that  $\eps'=\eps-\max\{\varphi(F):F\subseteq\mrm{Bad},\varphi(F)<\eps\}$ is as desired.
\end{proof}

\begin{thm}
Let $\mc{I}$ be an analytic P-ideal. Then $\mbb{L}(\mc{I}^*)$ cannot $+$-destroy $\mc{I}$ iff $\mc{I}$ is not fragile.
\end{thm}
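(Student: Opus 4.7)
The proof has two directions. For the easier direction, that fragile $\Rightarrow$ $\mbb{L}(\mc{I}^*)$ $+$-destroys, I plan to take a fragility witness $f:Y\to[\om]^{<\om}$ and form $X:=\bigcup\{f(n):n\in Y_G\cap Y\}$ in $V^{\mbb{L}(\mc{I}^*)}$. The set $Y_G\cap Y$ will be infinite by a density argument: for any condition $T$ with stem $s$, $\mrm{ext}_T(s)\in\mc{I}^*$ meets $Y\in\mc{I}^+$ in an $\mc{I}^+$ set, so one can always extend the stem by a new element of $Y$. Clause (a) of fragility, applied to any $A\in\mc{I}^V$, then gives $f^{-1}[A^{<\om}]\in\mc{I}^V$, whence the defining property of $Y_G$ makes $X\cap A$ a finite union of finite sets; clause (b) is $\Ubf{\Pi}^1_1$ and survives the extension, yielding $X\in\mc{I}^+$.

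For the converse, I assume $\mc{I}=\mrm{Exh}(\varphi)$ is not fragile and, toward contradiction, that $T^*\in\mbb{L}(\mc{I}^*)$ and a name $\mr{X}$ satisfy $T^*\Vdash \mr{X}\in\mc{I}^+$ together with $T^*\Vdash|\mr{X}\cap A|<\om$ for every $A\in\mc{I}^V$; after strengthening $T^*$ I may also assume $T^*\Vdash \|\mr{X}\|_\varphi>2\eps$ for some $\eps>0$. For each node $t\in T^*$ above $\mrm{stem}(T^*)$ I will choose a refinement $T'_t\leq T^*_t$ and a finite set $F_t\in\mc{H}_{\varphi,\eps}$ with $T'_t\Vdash F_t\subseteq\mr{X}$; existence follows from lsc of $\varphi$ by truncating $\mr{X}$ to a finite initial segment of $\varphi$-mass $>\eps$ and deciding it below $T^*_t$. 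The countably many choices will be made so that the $F_t$'s are pairwise distinct and, for each fixed $t$, $\min F_{t^\frown k}\to\infty$ as $k\to\infty$; this is arranged by always pushing the decided piece past a prescribed threshold.

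I then enumerate the nodes above the stem as $\{t_n:n\in\om\}$, set $Y_n:=\mrm{ext}_{T^*}(t_n)\in\mc{I}^*$, and define $f_n:Y_n\to\mc{H}_{\varphi,\eps}$ by $f_n(k):=F_{t_n^\frown k}$. Distinctness of the $F_t$'s makes $f_n^{-1}[\{H\}]$ at most a singleton, so the $f_n$'s meet the hypothesis of Lemma~\ref{fragilepropK}; non-fragility of $\mc{I}$ then yields $A\in\mc{I}$ with $f_n^{-1}[A^{<\om}]\in\mc{I}^+$ for every $n$. I claim this $A$ contradicts our assumption. Given any $S\leq T^*$ with stem $u=t_m$ and any $K\in\om$, the set $\mrm{ext}_S(u)\cap f_m^{-1}[A^{<\om}]$ is the intersection of an $\mc{I}^*$-set and an $\mc{I}^+$-set, hence itself $\mc{I}^+$ and infinite; picking $k$ in it with $\min F_{u^\frown k}>K$ and $a\in F_{u^\frown k}\cap A$, the Laver tree $S':=S_{u^\frown k}\cap T'_{u^\frown k}$ extends $S$ and forces $a\in\mr{X}\cap A$, $a>K$, contradicting $T^*\Vdash|\mr{X}\cap A|<\om$.

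The step I expect to be most delicate is the coordination of the refinements $T'_t$: both arranging them to be pairwise distinct with $\min F_{t^\frown k}\to\infty$ as $k\to\infty$, and ensuring that the intersection $S_{u^\frown k}\cap T'_{u^\frown k}$ is a legitimate Laver condition below $S$. This will likely require the stem of $T'_t$ to be kept as close to $t$ as possible, ideally equal to $t$, via a pure-decision argument for $\mbb{L}(\mc{I}^*)$ about finitary statements about $\mr{X}$, which should be available from the lsc of $\varphi$ together with the analytic P-ideal hypothesis on $\mc{I}$. The remainder of the argument is a clean combination of Lemma~\ref{fragilepropK} with the observation that the intersection of an $\mc{I}^*$-set and an $\mc{I}^+$-set is $\mc{I}^+$.
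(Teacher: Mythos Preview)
Your easy direction is correct and matches the paper's (via Fact~\ref{fragile+dest}): a density argument makes $Y_G\cap Y$ infinite, (a) gives almost-disjointness from $\mc{I}\cap V$, and (b) is $\Ubf{\Pi}^1_1$ so survives, giving $X\in\mc{I}^+$.

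For the hard direction, your approach diverges from the paper's and has a genuine gap at precisely the step you flag. You need, for every node $t$ above the stem, a condition $T'_t$ with $\mrm{stem}(T'_t)=t$ forcing $F_t\subseteq\mr{X}$ for some fixed $F_t\in\mc{H}_{\varphi,\eps}$; without the stem constraint the intersection $S_{u^\frown k}\cap T'_{u^\frown k}$ need not be a condition below $S$ (the longer stem of $T'_{u^\frown k}$ need not lie in $S$). This stem-preserving decision is a pure-decision property for $\mbb{L}(\mc{I}^*)$ which you have not proved, and your expectation that it ``should be available from the lsc of $\varphi$ together with the analytic P-ideal hypothesis'' is unsubstantiated. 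Note that $\sigma$-centeredness only gives you the weaker equivalence: ``some $T$ with stem $t$ forces $F\subseteq\mr{X}$'' $\Leftrightarrow$ ``every $T$ with stem $t$ is compatible with a condition forcing $F\subseteq\mr{X}$''; it does not produce a single deciding condition with stem $t$.

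The paper circumvents pure decision entirely by a rank argument built on this weaker notion (``$s$ favors $\mr{H}_m=E$'': no $T\leq T_0$ with stem $s$ forces $\mr{H}_m\neq E$). It first fixes names $\mr{H}_m$ for $\eps$-heavy finite pieces of $\mr{X}$ with the extra coding constraint $e(\mr{H}_m)>\mr{\ell}(m)$, which forces $\varrho_m(s)>0$ whenever $m\geq|s|$; so the stem itself never favors a value for large $m$, showing directly that your hoped-for pure decision fails for these names. The functions $f_{m,s}$ are then defined only at rank-$1$ nodes, whose rank-$0$ successors favor specific values (and $f_{m,s}^{-1}[\{E\}]\in\mc{I}$ follows from the rank structure rather than from bookkeeping distinctness). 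Lemma~\ref{fragilepropK} is applied to this countable family. In the final step, having located a suitable rank-$0$ successor $t^\frown(n)$ inside the given $T$, one passes to an extension $T'\leq T\!\upharpoonright\!(t^\frown(n))$ forcing the favored value; this extension need not have stem $t^\frown(n)$, but since it is already below $T$ the compatibility issue you worry about never arises. If you try to push your approach through, you will find yourself reproducing this rank argument.
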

\begin{proof}
A trivial density argument shows that $\mbb{L}(\mc{I}^*)$ destroys $\mc{I}\clrest Y$ for every $Y\in\mc{I}^+\cap V$, and hence applying Fact \ref{fragile+dest}, if $\mc{I}$ is fragile, then $\mbb{L}(\mc{I}^*)$ $+$-destroys $\mc{I}$.

Conversely, assume that $\mc{I}$ is not fragile. Let $\varphi$ be an lsc submeasure such that $\mc{I}=\mrm{Exh}(\varphi)$, and let $\mr{X}$ be an $\mbb{L}(\mc{I}^*)$-name for a $\mc{I}$-positive set, fix a $T_0\in\mbb{L}(\mc{I}^*)$ and an $\eps>0$ such that $T_0\vd\|\mr{X}\|_\varphi>\eps$. We show that there is an $A\in\mc{I}$ such that $T_0\vd|\mr{X}\cap A|=\om$.

Fix a bijection $e:[\om]^{<\om}\to \om$ and a sequence $(\mr{H}_m)_{m\in\om}$ of $\mbb{L}(\mc{I}^*)$-names  such that $T_0$ forces the following for every $m$: (i) $\mr{H}_m\subseteq\mr{X}$ and $\varphi(\mr{H}_m)>\eps$, (ii) $\max(\mr{H}_m)<\min(\mr{H}_{m+1})$, and (iii) $e(\mr{H}_m)>\mr{\ell}(m)$ where $\mr{\ell}$ is an $\mbb{L}(\mc{I}^*)$-name for the generic $\om\to\om$ function.

We will use a rank argument on $Q=T_0\cap\mrm{stem}(T_0)^\uparrow$. We say that an $s\in Q$ {\em favors} $\mr{H}_m=E$ (for some $E\in\mc{H}_{\varphi,\eps}$) if
\[ \forall\;T\leq T_0\;\big(\mrm{stem}(T)=s\longrightarrow T\nVdash \mr{H}_m\ne E\big).\]
Now define the rank functions $\varrho_m$ on $Q$ for every $m\in\om$ by recursion as follows: $\varrho_m(s)=0$ if there is an $E^s_m\in\mc{H}_{\varphi,\eps}$ such that $s$ favors $\mr{H}_m=E^s_m$; and $\varrho_m(s)=\al>0$ if $\varrho_m(s)\not<\al$ and $\{n:\varrho_m(s^\frown(n))<\al\}\in\mc{I}^+$. It is trivial to show that $\dom(\varrho_m)=Q$.

We claim that $\varrho_m(s)>0$ whenever $m\geq|s|$. Fix conditions $S_k\leq T_0$ for every $k$ such that $\mrm{stem}(S_k)=s$ and $\mrm{ext}_{S_k}(t)\subseteq\om\setminus k$ for every $t\in S_k\cap s^\uparrow$. Now if $m\geq |s|$ then $S_k\vd k\leq \mr{\ell}(m)< e(\mr{H}_m)$, and hence $s$ cannot favor $\mr{H}_m=E$ for any $E$ because $e(E)=k$ for some $k$.

If $\varrho_m(s)=1$ then $Y_{m,s}=\{n:\varrho_m(s^\frown (n))=0\}\in\mc{I}^+$, and $s^\frown(n)$ favors $\mr{H}_m=E^{s^\frown (n)}_m$ for every $n\in Y_{m,s}$. Define $f_{m,s}:Y_{m,s}\to\mc{H}_{\varphi,\eps}$ as $f_{m,s}(n)=E^{s^\frown (n)}_m$. Notice that $f_{m,s}^{-1}[\{E\}]\in\mc{I}$ for every $E$ because otherwise $s$ would favor $\mr{H}_m=E$, and hence $\varrho_m(s)$ would be $0$.

Applying Lemma \ref{fragilepropK}, there is an $A\in\mc{I}$ such that $f_{m,s}^{-1}[A^{<\om}]\in \mc{I}^+$ whenever $\varrho_m(s)=1$. We claim that $T_0\vd|\mr{X}\cap A|=\om$. Otherwise, there is a $T\leq T_0$ with stem $s$ forcing $\mr{X}\cap A\subseteq M$ for some $M\in\om$. Fix an $m\geq M,|s|$, then $\varrho_m(s)>0$ and hence there is a $t\in T$ above its stem of $m$-rank $1$ (this can be shown by induction on $\varrho_m(s)$). As $f^{-1}_{m,t}[A^{<\om}]\in\mc{I}^+$, we know that there is an $n\in\mrm{ext}_T(t)\cap f^{-1}_{m,t}[A^{<\om}]$, in particular,  $t^\frown(n)$ favors $\mr{H}_m=f_{m,t}(n)=E^{t^\frown (n)}_m\subseteq\om\setminus m\subseteq \om\setminus M$ and this set has nonempty intersection with $A$. If $T'\leq T\clrest (t^\frown(n))$ forces $\mr{H}_m=E^{t^\frown(n)}_m$ then $T'\vd \mr{X}\cap A\nsubseteq M$, a contradiction.
\end{proof}

Unfortunately, it is still unclear what happens under iterations:

\begin{prob} Let $\mc{I}$ be an analytic P-ideal which is not fragile. Is it true that finite support iterations of $\mbb{L}(\mc{I}^*)$ cannot $+$-destroy $\mc{I}$? Or at least, is $\cov^*(\mc{I},+)<\cov^*(\mc{I},\infty)$ consistent? (We have seen that for $\mc{I}=\mrm{tr}(\mc{N})$, this strict inequality holds in the random model.) Similarly, one can ask about possible separations of the uniformity numbers.
\end{prob}

\section{Remarks on $*$-destruction}\label{secstardest}
Let us begin with a short introduction to (Borel) Tukey connections using Fremlin's notations (for more details, see \cite{Fr} or \cite{Bl}): A triple $\mc{R}=(A,R,B)$ is a {\em (supported) relation} if $R\subseteq A\times B$, $A=\mrm{dom}(R)$, $B=\mrm{ran}(R)$, and $\nexists$ $b\in B$ $\forall$ $a\in A$ $aRb$ (where $aRb$ stands for $(a,b)\in R$). The relation $\mc{R}$ is {\em Borel} if $A,B\subseteq\,\!^\om\om$ and $R$ are Borel sets. For a given $\mc{R}=(A,R,B)$, a set $X\subseteq A$ is {\em $\mc{R}$-unbounded} if there is no $b\in B$ $R$-above every $a\in X$, and a $Y\subseteq B$ is {\em $\mc{R}$-cofinal} if for every $a\in A$ there is a $b\in Y$ such that $aRb$. We define the {\em unbounding} and {\em dominating numbers} of $\mc{R}$ as follows:
\begin{align*}
\mf{b}(\mc{R})&=\min\big\{|X|:X\subseteq A\;\text{is}\;\mc{R}\text{-unbounded}\big\}\\
\mf{d}(\mc{R}) &=\min\big\{|Y|:Y\subseteq B\;\text{is}\;\mc{R}\text{-cofinal}\big\}
\end{align*}
Every cardinal invariant from Cicho\'n's diagram (see \cite{Bl}) and from above can easily be written of this form, for example $\cov(\mc{M})=\mf{d}(\,\!^\om 2,\in,\mc{M})$, $\mf{d}=\mf{d}(\,\!^\om\om,\leq^*,\,\!^\om\om)$, $\add(\mc{N})=\mf{b}(\mc{N},\subseteq,\mc{N})$, $\non^*(\mc{I},\infty)=\mf{b}([\om]^\om,\in,\wh{\mc{I}})=\mf{b}([\om]^\om,R_\mrm{ii},\mc{I})$, and $\cov^*(\mc{I},+)=\mf{d}(\mc{I}^+,\in,\wh{\mc{I}})=\mf{d}(\mc{I}^+,R_\mrm{ii},\mc{I})$ where $XR_\mrm{ii} A$ iff $|X\cap A|=\om$, etc. Notice that each unbounding number is actually a dominating number and vice versa: If $\mc{R}^\perp=(B,\neg R^{-1},A)$ then $\mf{b}(\mc{R}^\perp)=\mf{d}(\mc{R})$ and $\mf{d}(\mc{R}^\perp)=\mf{b}(\mc{R})$. Furthermore, all these underlying relations can be seen as Borel (in the cases of $\mc{M}$, $\mc{N}$, and $\wh{\mc{I}}$ we can use natural codings of nice bases of these ideals).

\smallskip
Fremlin and Vojt\'a\v{s} isolated a method of comparing cardinal invariants of these forms (see \cite{frcichon} and \cite{Vojtas}), it turned out that most of the known inequalities can be proved by this method, and most importantly, applying this approach we immediately obtain more than ``just'' inequalities between cardinal invariants. For given (Borel) $\mc{R}_0=(A_0,R_0,B_0)$ and $\mc{R}_1=(A_1,R_1,B_1)$, we say that $\mc{R}_0$ is {\em (Borel) Tukey-reducible to $\mc{R}_1$}, $\mc{R}_0\leq_\mrm{(B)T} \mc{R}_1$,\footnote{Some authors, including of \cite{Bl} and \cite{tukeymor}, would write $\mc{R}_1\leq_\mrm{T}\mc{R}_0$ here.}  if there are (Borel) maps $\al:A_0\to A_1$ and $\be:B_1\to B_0$ such that $aR_0\be(b)$ whenever $\al(a)R_1 b$, in a diagram:
\begin{diagram}
\be(b)\in & B_0 & & \lTo^\be && B_1& \ni b\\
R_0&  & {\color{white} n} & \lImplies &  {\color{white} n}& & R_1\\
a\in & A_0 && \rTo_\al & & A_1 & \ni\al(a)
\end{diagram}
We use the equivalence notation $\equiv_\mrm{(B)T}$ for bireducibilities.

\smallskip
Now if $\mc{R}_0\leq_\mrm{T}\mc{R}_1$ then clearly
\[\tag{Ia,Ib} \mf{b}(\mc{R}_0)\geq\mf{b}(\mc{R}_1)\;\,\text{and}\,\;\mf{d}(\mc{R}_0)\leq\mf{d}(\mc{R}_1).\] If $\mc{R}_0\leq_\mrm{BT}\mc{R}_1$ then we know more but first we need the following definitions: Let $\PP$ be a forcing notion and $\mc{R}=(A,R,B)$ be a Borel relation. We say that $\PP$ is {\em $\mc{R}$-bounding} if $\vd_\PP\forall$ $a\in A\cap V[\mr{G}]$ $\exists$ $b\in B\cap V$ $aRb$, i.e. $B\cap V$ remains $\mc{R}$-cofinal in $V^\PP$; and we say that $\PP$ is {\em $\mc{R}$-dominating} if $\vd_\PP\exists$ $b\in B\cap V[\mr{G}]$ $\forall$ $a\in A\cap V$ $aRb$, i.e. $A\cap V$ is $\mc{R}$-bounded in $V^\PP$. For example, $\PP$ adds Cohen reals iff it is $(\mc{M},\not\ni,\,\!^\om 2)$-dominating, $\PP$ is $\,\!^\om\om$-bounding iff it is $(\,\!^\om\om,\leq^*,\,\!^\om\om)$-bounding, $\vd_\PP\,\!^\om 2\cap V\notin \mc{N}$ iff $\PP$ is $(\mc{N},\not\ni,\,\!^\om 2)$-bounding\footnote{Notice that, if $\PP$ is $(\mc{N},\not\ni,\,\!^\om 2)$-bounding, then it is not $(\,\!^\om 2,\in,\mc{N})$-dominating, but the reverse implication requires that $\PP$ satisfies some sort of homogeneity.}, $\PP$ $+$-destroys $\mc{I}$ iff it is $(\mc{I},\neg R_\mrm{ii},\mc{I}^+)$-dominating, and $\PP$ $*$-destroys $\mc{I}$ iff it is $(\mc{I},\neg R_\mrm{ii},\mc{I}^*)$-dominating iff it is $(\mc{I}\subseteq^*,\mc{I})$-dominating, etc.

It is easy to see that if $\mc{R}_0\leq_\mrm{BT}\mc{R}_1$ then
\begin{align*}
\tag{IIa} \PP\;\text{is}\;\mc{R}_1\text{-bounding}& \longrightarrow \PP\;\text{is}\; \mc{R}_0\text{-bounding},\\
\tag{IIb} \PP\;\text{is}\;\mc{R}_1\text{-dominating}& \longrightarrow \PP\;\text{is}\;\mc{R}_0\text{-dominating}.
\end{align*}

For example, now we can add the ``missing'' last point to Observations \ref{cardobs}:
\begin{obs}
Let $\mc{I}$ and $\mc{J}$ be Borel ideals and assume that  $(\mc{I},\subseteq^*,\mc{I})\leq_\mrm{BT}(\mc{J},\subseteq^*,\mc{J})$, that is, there are Borel functions $\al:\mc{I}\to\mc{J}$ and $\be:\mc{J}\to\mc{I}$ such that for every $A\in \mc{I}$ and $B\in\mc{J}$, $\al(A)\subseteq^* B$ implies $A\subseteq^* \be(B)$. Then  $\cof^*(\mc{I},\infty)=\non^*(\mc{I},*)\leq\non^*(\mc{J},*)=\cof^*(\mc{J},\infty)$ and $\add^*(\mc{I},\infty)=\cov^*(\mc{I},*)\geq\cov^*(\mc{J},*)=\add^*(\mc{J},\infty)$; and if $\PP$ cannot $*$-destroy $\mc{I}$ then $\PP$ cannot $*$-destroy $\mc{J}$ either, and dually, if $\vd_\PP\mc{I}^*\cap V\in\wh{\mc{I}}$ then $\vd_\PP\mc{J}^*\cap V\in\wh{\mc{J}}$.
\end{obs}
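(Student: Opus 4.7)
The plan is to unpack everything in the Tukey framework just introduced and then invoke the abstract consequences (Ia), (Ib), and (IIb). Write $\mc{R}_\mc{I}=(\mc{I},\subseteq^*,\mc{I})$ and $\mc{R}_\mc{J}=(\mc{J},\subseteq^*,\mc{J})$; the hypothesis says that $\mc{R}_\mc{I}\leq_\mrm{BT}\mc{R}_\mc{J}$ via the stipulated Borel maps $\al,\be$. First I would match these abstract relations with the concrete invariants and forcing notions: by the definitions of the Tukey numbers we have $\mf{b}(\mc{R}_\mc{I})=\add^*(\mc{I},\infty)$ and $\mf{d}(\mc{R}_\mc{I})=\cof^*(\mc{I},\infty)$, and from the definitions of the forcing-theoretic notions, $\PP$ is $\mc{R}_\mc{I}$-dominating if and only if $\PP$ $*$-destroys $\mc{I}$ (and analogously for $\mc{J}$). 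Combined with the equalities $\cof^*(\mc{I},\infty)=\non^*(\mc{I},*)$ and $\add^*(\mc{I},\infty)=\cov^*(\mc{I},*)$ from Observations \ref{cardobs}(2), the two chains of cardinal inequalities then fall out of (Ia) and (Ib), and the non-$*$-destructibility transfer is immediate from the contrapositive of (IIb).

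For the last clause I would use the standard duality for Tukey reductions: a reduction $\mc{R}_0\leq_\mrm{BT}\mc{R}_1$ witnessed by $(\al,\be)$ yields a reduction $\mc{R}_1^\perp\leq_\mrm{BT}\mc{R}_0^\perp$ simply by interchanging the roles of $\al$ and $\be$, since the implication required by the $\perp$-reduction is the contrapositive of the original one. Hence $\mc{R}_\mc{J}^\perp\leq_\mrm{BT}\mc{R}_\mc{I}^\perp$. It then remains to identify the statement ``$\vd_\PP \mc{I}^*\cap V\in\wh{\mc{I}}$'' with $\PP$ being $\mc{R}_\mc{I}^\perp$-dominating. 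Unwinding, $\mc{I}^*\cap V\in\wh{\mc{I}}$ in $V[G]$ is equivalent to the existence of a single $A\in\mc{I}\cap V[G]$ with $|F\cap A|=\om$ for every $F\in\mc{I}^*\cap V$ (a finite union $\wh{A_0}\cup\dots\cup\wh{A_{n-1}}$ collapses to $\wh{A_0\cup\dots\cup A_{n-1}}$). Using the correspondence $F\leftrightarrow B:=\om\setminus F$ between $\mc{I}^*$ and $\mc{I}$, the condition $|F\cap A|=\om$ rewrites as $A\nsubseteq^* B$, and so the statement becomes ``$\exists A\in\mc{I}\cap V[G]\,\forall B\in\mc{I}\cap V\;A\nsubseteq^* B$'', which is precisely $\mc{R}_\mc{I}^\perp$-domination. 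Applying (IIb) to $\mc{R}_\mc{J}^\perp\leq_\mrm{BT}\mc{R}_\mc{I}^\perp$ yields the desired transfer.

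All of this is bookkeeping in the Tukey framework just set up; there is no genuinely hard step. The only point that requires a little care is the dualization in the last paragraph, namely the translation that identifies the $\wh{\mc{I}}$-statement with $\mc{R}^\perp$-domination via $F\leftrightarrow\om\setminus F$. However, this is the same translation that was already used inside the proof of Observations \ref{cardobs}(2) to establish $\add^*(\mc{I},\infty)=\cov^*(\mc{I},*)$, so I would simply invoke it in passing rather than argue it from scratch.
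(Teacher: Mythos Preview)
Your proposal is correct and follows exactly the route the paper intends: the Observation is stated immediately after the general consequences (Ia), (Ib), (IIa), (IIb) are introduced, with the explicit remark that $*$-destruction coincides with $(\mc{I},\subseteq^*,\mc{I})$-domination, and the paper gives no further proof. Your write-up simply spells out the bookkeeping, including the dualization step for the last clause, which is precisely what the paper leaves implicit.
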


\begin{exa}
Let $\mc{I}$ be a Borel ideal. The identity maps show that  $(\mc{I},\neg R_\mrm{ii},[\om]^\om)\leq_\mrm{BT}(\mc{I},\neg R_\mrm{ii},\mc{I}^+)$ holds. Conversely, if $\mc{I}$ is fragile with $Y=\om$ is the definition, then a trivial modification of the proof of Proposition \ref{edfin} shows that $(\mc{I},\neg R_\mrm{ii},\mc{I}^+)\leq_\mrm{BT}(\mc{I},\neg R_\mrm{ii},[\om]^\om)$ also holds, and so $\non^*(\mc{I},+)=\non^*(\mc{I},\infty)$, $\cov^*(\mc{I},+)=\cov^*(\mc{I},\infty)$, if $\PP$ destroys $\mc{I}$ then it $+$-destroys $\mc{I}$, and if $\vd_\PP [\om]^\om\cap V\notin \wh{\mc{I}}$ then $\vd_\PP \mc{I}^+\cap V\notin\wh{\mc{I}}$.
\end{exa}

\smallskip
Concerning analytic P-ideals and their $*$-destructibility, we know (basically \cite[Lem. 3.1]{towers}) that $(\mc{I},\subseteq^*,\mc{I})\leq_\mrm{BT}(\,\!^\om\om,\in^*,\mrm{Slm})$ for every such ideal where $\mrm{Slm}=\prod_{n\in\om}[\om]^{\leq n}$ is the family of {\em slaloms} on $\om$  (equipped with the product topology where $[\om]^{\leq n}$ is discrete) and $f\in^* S$ iff $f(n)\in S(n)$ for almost every $n$; and (see \cite[Cor. 524H]{Fr}) that $(\,\!^\om\om,\in^*,\mrm{Slm})\equiv_{\mrm{BT}}(\mc{N}\subseteq,\mc{N})$. In particular, if $\mc{I}$ is tall, then
\[\tag{Ia,Ib} \add^*(\mc{I},\infty)\geq \add(\mc{N})\;\;\text{and}\;\; \cof^*(\mc{I},\infty)\leq\cof(\mc{N}),\]\[\tag{IIa} \text{if}\;\PP\;\text{has the Sacks-property, then}\;\mc{I}\cap V\;\text{is cofinal in}\;\mc{I}\cap V^\PP,\]
\[\tag{IIb}  \text{if}\;\vd_\PP\bigcup\mc{N}\cap V\in\mc{N}\footnote{In other words, the union of Borel null sets coded in $V$ is a null set in $V^\PP$.},\;\text{then $\PP$ $*$-destroys}\;\mc{I}.\]

The question whether these inequalities and implications are actually equalities and equivalences for every tall analytic P-ideal is still open, but there are some partial results (see also after Corollary \ref{94}). An lsc submeasure $\varphi$ is {\em summable-like} if there is an $\eps>0$ such that for every $\de>0$ we can pick a sequence $(F_k)$ of pairwise disjoint finite sets and an $m\in\om$ such that (i) $\varphi(F_k)<\de$ for every $k$ and (ii) $\varphi(\bigcup_{k\in H}F_k)\geq \eps$ for every $H\in [\om]^m$. We say that an analytic P-ideal $\mc{I}$ is summable-like if $\mc{I}=\mrm{Exh}(\varphi)$ for some summable-like submeasure $\varphi$ (which implies that if $\mc{I}=\mrm{Exh}(\psi)$ then $\psi$ is also summable-like, this follows from \cite[Rem. 3.3]{repr}). For example, summable ideals which are not trivial modifications of $\mrm{Fin}$ and $\mrm{tr}(\mc{N})$ (see \cite{repr}) are summable-like. Also, if $\mc{I}\clrest X$ is summable-like for some $X\in\mc{I}^+$, then $\mc{I}$ is summable-like too. The next result is a special case of \cite[Thm. 3.7(ii)]{SoTo}.

\begin{prop}
$(\,\!^\om\om,\in^*,\mrm{Slm})\leq_\mrm{BT}(\mc{I},\subseteq^*,\mc{I})$ for every summable-like ideal $\mc{I}$, and hence these relations are BT-equivalent.
\end{prop}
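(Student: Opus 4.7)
Fix an lsc submeasure $\varphi$ witnessing summable-likeness of $\mc{I}=\mrm{Exh}(\varphi)$, with constant $\eps>0$. The plan is to use summable-likeness to build pairwise disjoint finite ``tiles'' in $\om$ so that any $g\in\prod_nK_n$ is encoded as an element of $\mc{I}$ which is recoverable from a narrow slalom; a standard width-equivalence then lifts the reduction to the full slalom relation on $\,\!^\om\om$.

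By recursion on $n\in\om$ I would build integers $-1=L_{-1}<L_0<L_1<\dotsb$ together with, for each $n$, a finite family $\{F^n_k:k<K_n\}$ of pairwise disjoint finite subsets of $(L_{n-1},L_n]$ such that $\varphi(F^n_k)<2^{-n}$ for every $k$, and $\varphi\bigl(\bigcup_{k\in H}F^n_k\bigr)\geq\eps$ for every $H\in[K_n]^{m_n}$, where $m_n\geq \eps\cdot 2^n\to\infty$ and $K_n/m_n\to\infty$. At step $n$, summable-likeness with $\de=2^{-n}$ supplies a pairwise disjoint sequence $(G_k)_{k\in\om}$ with $\varphi(G_k)<2^{-n}$ and an integer $m_n$ fulfilling clause (ii) of the definition. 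Only finitely many $G_k$ intersect the finite set $[0,L_{n-1}]$; after discarding those and reindexing, let $F^n_0,\dotsc,F^n_{K_n-1}$ be the first $K_n:=(n+1)m_n$ of the remaining $G_k$, and set $L_n=\max\bigcup_{k<K_n}F^n_k$.

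Now define the Borel maps $\al_0:\prod_nK_n\to\mc{I}$ by $\al_0(g)=\bigcup_nF^n_{g(n)}$ and $\be_0:\mc{I}\to\prod_n[K_n]^{<m_n}$ by setting $\be_0(B)(n)=\{k<K_n:F^n_k\subseteq B\}$ when this set has size strictly less than $m_n$, and $\emptyset$ otherwise. The bound $\varphi\bigl(\al_0(g)\setminus[0,L_{N-1}]\bigr)\leq\sum_{n\geq N}2^{-n}$ shows $\al_0(g)\in\mrm{Exh}(\varphi)$. Assume $\al_0(g)\subseteq^*B$. The disjointness of the intervals $(L_{n-1},L_n]$ forces $F^n_{g(n)}\subseteq B$ for almost all $n$, so $g(n)\in\{k<K_n:F^n_k\subseteq B\}$ for almost all $n$. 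This latter set has size $<m_n$ for almost all $n$ as well, for otherwise clause (ii) would produce $\varphi\bigl(B\cap(L_{n-1},L_n]\bigr)\geq\eps$ for infinitely many $n$, which, since $L_n\to\infty$, contradicts $\|B\|_\varphi=0$. Consequently $g\in^*\be_0(B)$, proving $(\prod_nK_n,\in^*,\prod_n[K_n]^{<m_n})\leq_\mrm{BT}(\mc{I},\subseteq^*,\mc{I})$.

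The proof is completed by invoking Bartoszyński's classical theorem: whenever $K_n,m_n\to\infty$ and $K_n/m_n\to\infty$, the restricted-slalom relation $(\prod_nK_n,\in^*,\prod_n[K_n]^{<m_n})$ is Borel-Tukey-equivalent to $(\mc{N},\subseteq,\mc{N})$, hence, by the equivalence already quoted from \cite[Cor.~524H]{Fr}, to $(\,\!^\om\om,\in^*,\mrm{Slm})$. I expect the main obstacle to be arranging the tiles so that pairwise disjointness holds \emph{across} stages while the summable-like lower bound in clause (ii) survives; placing the stage-$n$ tiles inside the disjoint intervals $(L_{n-1},L_n]$ both achieves this and makes the $\|B\|_\varphi\geq\eps$ contradiction transparent through the condition $L_n\to\infty$.
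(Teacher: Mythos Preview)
Your proof is correct and follows the same core strategy as the paper's: encode $g$ as $\bigcup_n F^n_{g(n)}$ using the pairwise disjoint families provided by summable-likeness, and decode $B\in\mc{I}$ via the indices $k$ with $F^n_k\subseteq B$. The paper differs in two technical choices. First, it keeps \emph{infinitely} many $F^n_k$ per level (indexed by $k\in\om$) rather than truncating to $K_n$ many in an interval, so its intermediate relation is $(\,\!^\om\om,\in^*,\mrm{Slm}(\bar{m}))$ with $\bar{m}=(m_n)$ strictly increasing, rather than your finite-product relation $(\prod_n K_n,\in^*,\prod_n[K_n]^{<m_n})$. Second, instead of invoking Bartoszy\'nski's theorem as a black box, the paper proves directly in a short Claim (attributed to \cite{SQ}) that $(\,\!^\om\om,\in^*,\mrm{Slm}(\bar{m}))\equiv_{\mrm{BT}}(\,\!^\om\om,\in^*,\mrm{Slm})$ via an explicit coding of initial segments. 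Your interval placement $(L_{n-1},L_n]$ makes cross-level disjointness and the $\|B\|_\varphi\geq\eps$ contradiction slightly more transparent; the paper's version is more self-contained, since the Borel-Tukey (as opposed to merely cardinal) form of the Bartoszy\'nski equivalence for the bounded-domain relation you use is not always stated explicitly in the literature.
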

\begin{proof}
For a strictly increasing $\bar{m}=(m_n)_{n\in\om}\in\,\!^\om\om$ let $\mrm{Slm}(\bar{m})=\prod_{n\in\om}[\om]^{\leq m_n}$ be the family of {\em $\bar{m}$-slaloms}, and define the relation $(\,\!^\om\om,\in^*,\mrm{Slm}(\bar{m}))$ as in the case of $m_n=n$.
\begin{claim} {\em (see \cite{SQ})}
$(\,\!^\om\om,\in^*,\mrm{Slm}(\bar{m}))\equiv_{\mrm{BT}}(\,\!^\om\om,\in^*,\mrm{Slm})$
\end{claim}
\begin{proof}[Proof of the Claim]
$(\,\!^\om\om,\in^*,\mrm{Slm}(\bar{m}))\leq_\mrm{BT}(\,\!^\om\om,\in^*,\mrm{Slm})$ is trivial. Conversely, fix a bijection $j:\om\to\,\!^{<\om} \om$; for $f\in\,\!^\om\om$ let $f'\in\,\!^\om\om$, $f'(n)=j^{-1}(f\clrest m_{n+1})$; and for $S\in\mrm{Slm}(\bar{m})$ and $n\in[m_k,m_{k+1})$ let
\[S'(n)=\big\{j(\ell)(n):\ell\in S(k)\;\text{and}\; |j(\ell)|=m_{k+1}\big\},\]
and if $n\in [0,m_0)$ then let $S'(n)=\0$. Notice that if $n\in [m_k,m_{k+1})$ then $|S(k)|\le m_k$ so $|S'(n)|\le m_k\le n$ for each $n$, hence $S\in\mrm{Slm}$.

Now if $f'\in^*S$, $f'(k)=j^{-1}(f\clrest m_{k+1})\in S(k)$ for $k\geq K$, then for every such $k$ and $n\in [m_k,m_{k+1})$ we have $f(n)=(f\clrest m_{k+1})(n)=j(f'(k))(n)\in S'(n)$, and hence $f\in^* S'$.
\end{proof}

Let $\mc{I}=\mrm{Exh}(\varphi)$ and $\eps>0$ from the definition of summable-likeness. For every $n$ pick a sequence $(F^n_k)_{k\in\om}$ of pairwise disjoint finite sets and an $m_n\in\om$ to $\de=2^{-n}$, that is, $\varphi(F^n_k)<2^{-n}$ for every $n,k$, and $\varphi(\bigcup\{F^n_k:k\in H\})\geq\eps$ for every $H\in [\om]^{m_n}$. We can assume that $\bar{m}=(m_n)_{n\in\om}$ is strictly increasing and, by shrinking the sequences $(F^n_k)_{k\in\om}$, that $F^n_k\cap F^{n'}_{k'}=\0$ whenever $n\ne n'$ or $k\ne k'$.

We will define a reduction $(\,\!^\om\om,\in^*,\mrm{Slm}(\bar{m}))\leq_\mrm{BT}(\mc{I},\subseteq^*,\mc{I})$. For $g\in\,\!^\om\om$ let $A_g=\bigcup\{F^n_{g(n)}:n\in\om\}$, then $A_g\in\mc{I}$ because lsc submeasures are $\sigma$-subadditive; and for $B\in\mc{I}$ let $S_B(n)=\{k:F^n_k\subseteq B\}$ if this set is of size $<m_n$, and $S_B(n)=\0$ otherwise. Notice that for almost every $n$ (say for $n\geq N_B$), we defined $S_B(n)$ according to the first option because otherwise there were $X\in [\om]^\om$ and $H_n\in [\om]^{m_n}$ ($n\in X$) such that $F^n_k\subseteq B$ for every $n\in X$ and $k\in H_n$, and hence $B$ contains infinitely many pairwise disjoint sets $\bigcup\{F^n_k:k\in H_n\}$ ($n\in X$) each of whom is of measure $\geq\eps$, and hence $B\notin \mc{I}$, a contradiction.

Now if $A_g\subseteq^* B$, then if $n$ is large enough, say $n\geq M$ then $\forall$ $k$ $(F^n_k\subseteq A_g \to F^n_k\subseteq B)$. In particular, if $n\geq M,N_B$ then $F^n_{g(n)}\subseteq B$, and hence $g(n)\in S_B(n)$, so $g\in^*S_B$.
\end{proof}

\begin{cor}\label{94}
If $\mc{I}$ is tall and summable-like then in (Ia,Ib) the inequalities are actually equalities, and in (IIa,IIb) the implications are actually equivalences.
\end{cor}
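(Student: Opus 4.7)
The plan is almost entirely bookkeeping on top of the Proposition just proved. Combining $(\,\!^\om\om,\in^*,\mrm{Slm})\leq_\mrm{BT}(\mc{I},\subseteq^*,\mc{I})$ (the Proposition) with the reverse reduction $(\mc{I},\subseteq^*,\mc{I})\leq_\mrm{BT}(\,\!^\om\om,\in^*,\mrm{Slm})$ recalled before the Corollary (from \cite[Lem. 3.1]{towers}) and Fremlin's $(\,\!^\om\om,\in^*,\mrm{Slm})\equiv_\mrm{BT}(\mc{N},\subseteq,\mc{N})$ (\cite[Cor. 524H]{Fr}), one obtains the chain of Borel Tukey equivalences
\[ (\mc{I},\subseteq^*,\mc{I})\;\equiv_\mrm{BT}\;(\,\!^\om\om,\in^*,\mrm{Slm})\;\equiv_\mrm{BT}\;(\mc{N},\subseteq,\mc{N}). \]
From here all four conclusions are immediate consequences of the general principles (Ia), (Ib), (IIa), (IIb) applied in both directions.

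Concretely, recalling from Observations \ref{cardobs}(2) that $\mf{b}(\mc{I},\subseteq^*,\mc{I})=\add^*(\mc{I},\infty)$ and $\mf{d}(\mc{I},\subseteq^*,\mc{I})=\cof^*(\mc{I},\infty)$, the two directions of (Ia) collapse $\add^*(\mc{I},\infty)\geq\add(\mc{N})$ to an equality, and similarly (Ib) turns $\cof^*(\mc{I},\infty)\leq\cof(\mc{N})$ into $\cof^*(\mc{I},\infty)=\cof(\mc{N})$. For the forcing statements, one uses the standard fact that the Sacks property is exactly $(\,\!^\om\om,\in^*,\mrm{Slm})$-boundingness, which by the BT-equivalence above and (IIa) applied both ways is equivalent both to $(\mc{N},\subseteq,\mc{N})$-boundingness and to $(\mc{I},\subseteq^*,\mc{I})$-boundingness; the latter is precisely the assertion that $\mc{I}\cap V$ is cofinal in $\mc{I}\cap V^\PP$. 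Analogously, $\vd_\PP\bigcup\mc{N}\cap V\in\mc{N}$ is exactly $(\mc{N},\subseteq,\mc{N})$-dominatingness, while $\PP$ $*$-destroys $\mc{I}$ is $(\mc{I},\subseteq^*,\mc{I})$-dominatingness, so (IIb) applied in both directions delivers the claimed equivalence.

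There is no real obstacle: since the Proposition furnishes the missing half of the Tukey equivalence, the Corollary is a formal consequence of the framework developed at the beginning of the section, and the proof would consist simply of recording these implications.
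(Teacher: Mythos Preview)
Your proposal is correct and matches the paper's approach exactly: the paper gives no proof for this corollary because it is immediate from the Proposition together with the already-recorded reductions $(\mc{I},\subseteq^*,\mc{I})\leq_\mrm{BT}(\,\!^\om\om,\in^*,\mrm{Slm})\equiv_\mrm{BT}(\mc{N},\subseteq,\mc{N})$ and the general consequences (Ia), (Ib), (IIa), (IIb) of Borel Tukey reducibility. The only quibble is that the identifications $\mf{b}(\mc{I},\subseteq^*,\mc{I})=\add^*(\mc{I},\infty)$ and $\mf{d}(\mc{I},\subseteq^*,\mc{I})=\cof^*(\mc{I},\infty)$ come straight from the definitions rather than from Observations~\ref{cardobs}(2).
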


Concerning $*$-destructibility and combinatorics of analytic P-ideals, one of the most fundamental questions is if the above proposition, or at least the reverse inequalities in (Ia,Ib) and reverse implications in (IIa,IIb) hold for every tall analytic P-ideal.

Concerning non summable-like ideals, e.g. density ideals, in \cite{cardinvanalp} (applying results due to Fremlin and Farah), the authors proved that  $\add^*(\mc{Z}_{\vec{\mu}},\infty)=\add(\mc{N})$ and $\cof^*(\mc{Z}_{\vec{\mu}},\infty)=\cof(\mc{N})$ hold for every tall density ideal $\mc{Z}_{\vec{\mu}}$. Their proof is of purely combinatorial nature, is not via Borel Tukey connections.

Concerning $\mc{Z}$, there are strong indications that $(\,\!^\om\om,\in^*,\mrm{Slm})\leq_\mrm{BT}(\mc{Z},\subseteq^*,\mc{Z})$ may not hold (see e.g. \cite[Cor. 524H]{Fr} and \cite[Thm. 7]{LV}). At the same time, we already know that $\add^*(\mc{Z},\infty)=\add(\mc{N})$ and $\cof^*(\mc{Z},\infty)=\cof(\mc{N})$, and also (see \cite[Thm. 6.16]{FaSo}, based on Fremlin's proof of these last equalities) that the ``reverse'' (IIa) holds for $\mc{Z}$, that is, if $\mc{Z}\cap V$ is cofinal in $\mc{Z}\cap V^\PP$, then $\PP$ has the Sacks-property. The last missing implication, (IIb) for $\mc{Z}$ is still an open problem:

\begin{prob}
Does there exist a $\PP$ which $*$-destroys $\mc{Z}$ but does not add a slalom capturing all ground model reals?
\end{prob}

\section{Further questions}\label{secque}

Additional to the problems from the previous sections, here we list a couple of further questions we found interesting.

\subsection*{Destruction without collateral damage}
Fix two Borel ideals $\mc{I}$ and $\mc{J}$ and assume that there is no ``obvious'' reason why $\infty/+$-destruction of $\mc{J}$ would imply $\infty/+$-destruction of $\mc{I}$, e.g. $\mc{I}\nleq_\mrm{K}\mc{J}\clrest X$ for any $X\in\mc{J}^+$. One may ask if we can find a forcing notion $\PP$ which $\infty/+$-destroys $\mc{J}$ but does not $\infty/+$-destroy $\mc{I}$, or even $+$-destroys $\mc{J}$ without destroying $\mc{I}$, etc.

\subsection*{Destruction without adding unbounded reals}
First of all, by applying results from \cite{Lafl0}, we show that every $F_\sigma$ ideal can be $+$-destroyed by an $\,\!^\om\om$-bounding proper forcing notion. In \cite{Lafl0}, Laflamme showed that for every $F_\sigma$ ideal $\mc{I}$ there is an $\,\!^\om\om$-bounding proper forcing notion which destroys $\mc{I}$. This construction depends on certain parameters rather than directly on the ideal. We show that these parameters can be chosen such that the associated forcing notion $+$-destroys the ideal.

Let $\mc{I}=\bigcup_{n\in\om}\mc{C}_n$ be an $F_\sigma$ ideal on $\om$ ($\mc{C}_n$ compact) and $\mc{C}=\bigcup\{C\setminus n:C\in\mc{C}_n\}$. Then $\mc{C}$ is compact and $\mc{I}=\{F\cup C:F\in [\om]^{<\om},C\in\mc{C}\}$. Using this family, we can construct (see \cite[Lem. 3.2]{Lafl0} and below) a partition $(Q_n)_{n\in\om}$ of $\om$ into nonempty finite sets and families $\mc{E}_n\subseteq\mc{P}(Q_n)$ such that
\begin{itemize}
\item[(a)] $|\mc{E}_n|\geq n+1$ and $\bigcap\mc{E}\ne\0$ for every $\mc{E}\in [\mc{E}_n]^{n+1}$;
\item[(b)] $\forall$ $A\in\mc{I}$ $\exists$ $E\in\prod_{n\in\om}\mc{E}_n$ $|A\cap \bigcup_{n\in\om}E(n)|<\om$.
\end{itemize}
The forcing notion depends on these parameters (that is, on $(Q_n)$ and $(\mc{E}_n)$): $T\in \PP$ iff $T\subseteq \bigcup_{k\in\om}\prod_{n<k}[\mc{E}_n]^{n+1}$ is a perfect tree (where $\prod\0=\{\0\}$) such that
\[ \forall\;n\;\exists\;M\;\forall\;m\geq M\;\forall\;s\in T_m\;\forall\;\mc{E}\in [\mc{E}_m]^{n+1}\;|\{\mc{H}\in\mrm{ext}_T(s):\mc{E}\subseteq\mc{H}\}|\geq n+1\]
where $T_m=\{s\in T:|s|=m\}$ stands for the $m$th level of $T$. Then $\PP$ destroys $\mc{I}$ because if $e\in \prod_{n\in\om}[\mc{E}_n]^{n+1}$ is the generic real, then $\bigcup_{n\in\om}\bigcap e(n)\in [\om]^\om$ has finite intersection with every $A\in\mc{I}\cap V$. The nontrivial part (see \cite[Thm. 3.1]{Lafl0}) is that $\PP$ is proper and $\,\!^\om\om$-bounding.

\smallskip
Observe that the sequences $Q'_n:=Q_{n+1}$ and $\mc{E}'_n:=\mc{E}_{n+1}$ satisfy (a) and (b) above (on $\om\setminus Q_0$ with $\mc{I}\clrest (\om\setminus Q_0)$), and hence give rise to an analogous poset $\PP'$. If $e'$ is a $\PP'$-generic over $V$, then $E\cap \bigcap e'(n)\neq\emptyset$ for every $E\in \mc{E}'_n$ because $e'(n)\cup\{E\} \in [\mc{E}'_{n}]^{\leq n+2}= [\mc{E}_{n+1}]^{\leq n+2}$. Applying
(b) in $V^{\PP'}$ (it is a $\Ubf{\Pi}^1_2$ property) we conclude that $\bigcup_{n\in\om}\bigcap e'(n) \in\mc{I}^+$.

\smallskip
In particular, if $\mc{I}$ is an $F_\sigma$ ideal, then we can force $\mf{b}<\cov^*(\mc{I},+)$.

\begin{prob}
Which Borel ideals can be $+$-destroyed by $\,\!^\om\om$-bounding proper forcing notions?
\end{prob}

\subsection*{More general degrees of destruction}
We can define an even more general notion of destroying ideals as follows: Fix a Borel ideal $\mc{I}$ and a Borel  $\mc{D}\subseteq \bigcup\wh{\mc{I}}$. We say that $\PP$ {\em can $\mc{D}$-destroy} $\mc{I}$ if there is a $p\in\PP$ such that $p\vd\exists$ $D\in \mc{D}$ $\forall$ $A\in\mc{I}^V$ $|D\cap A|<\om$, and of course, we can define the cardinal invariants $\mrm{inv}^*(\mc{I},\mc{D})$ as well.  This notion raises a plethora of questions, for example: We know that $\mc{I}_{1/n}\subseteq\mc{Z}$, and hence $\mc{I}^+_{1/n}\supseteq\mc{Z}^+$. In particular, $\mbb{M}(\mc{Z}^*)$ $\mc{I}^+_{1/n}$-destroys $\mc{Z}$. Does there exist a forcing notion $\PP$ which $\mc{I}^+_{1/n}$-destroys $\mc{Z}$ but cannot $\mc{Z}^+$-destroy $\mc{Z}$?

\subsection*{Forcing with $\PP_{\wh{\mc{I}}}$} Let $\mc{I}$ be a tall analytic P-ideal. What can we say about the forcing notion $\PP_{\wh{\mc{I}}}=\mc{B}([\om]^\om)\setminus \wh{\mc{I}}$? We know that it is proper (see \cite[Section 4.6]{zap}) and it clearly destroys $\mc{I}$. Also, notice that $\mc{I}^+\in\PP_{\wh{\mc{I}}}$ is a condition and forces that $\mc{I}$ is $+$-destroyed, similarly, $\mc{I}^*$ forces that $\mc{I}$ is $*$-destroyed. In other words, it seems reasonable to decompose $\PP_{\wh{\mc{I}}}$ into three forcing notions $\PP(\mc{I},\infty)=\PP_{\wh{\mc{I}}\upharpoonright (\mc{I}\cap [\om]^\om)}$, $\PP(\mc{I},+)=\PP_{\wh{\mc{I}}\upharpoonright (\mc{I}^+\setminus\mc{I}^*)}$, and $\PP(\mc{I},*)=\PP_{\wh{\mc{I}}\upharpoonright \mc{I}^*}$.

\begin{prob}
Can $\PP(\mc{I},\infty)$ $+$-destroy or $\PP(\mc{I},+)$ $*$-destroy $\mc{I}$? If not, what can we say about their countable support  iterations? Do they add dominating etc reals?
\end{prob}

If $\mc{I}$ is not a $P$-ideal but $\cov^*(\mc{I},\infty)>\om$, then we can talk about the $\sigma$-ideal $\wh{\mc{I}}^\sigma\not\ni[\om]^\om$ generated by $\wh{\mc{I}}$ and the forcing notion $\PP(\mc{I},\infty)=\PP_{\wh{\mc{I}}^\sigma\upharpoonright (\mc{I}\cap [\om]^\om)}$. Similarly, if even $\cov^*(\mc{I},+)>\om$, then we can talk about $\PP(\mc{I},+)=\PP_{\wh{\mc{I}}^\sigma\upharpoonright (\mc{I}^+\setminus \mc{I}^*)}$ too. In particular, one can ask the  questions above about these forcing notions as well.

\end{document}